\numberwithin{equation}{section}
\newtheorem{lemma}{Lemma}[section]
\newtheorem{remark}[lemma]{Remark}
\newtheorem{cor}[lemma]{Corollary}
\newtheorem{thm}{Theorem}[section]
\newtheorem{prop}[lemma]{Proposition}
\newtheorem{conj}[lemma]{Conjecture}
\newtheorem{notation}[lemma]{Notation}
\newtheorem{definition}[lemma]{Definition}
\providecommand\R{\mathbb{R}}
\providecommand\C{\mathbb{C}}
\providecommand{\grad}{\nabla}
\providecommand{\tO}{\Omega_{\delta}}
\providecommand{\pO}{\partial\Omega}
\providecommand{\rhoO}{\Omega_{\rho/4}^c}
\providecommand{\prhoO}{\partial\Omega_{\rho/4}^c}
\providecommand{\tPsi}{\tilde{\Psi}}
\providecommand{\p}{\partial}
\providecommand{\Div}{\mathrm{div}}
\providecommand{\loc}{\mathrm{loc}}
\DeclareMathOperator*{\dist}{dist}%{\hbox{dist}}
\def\Xint#1{\mathchoice
{\XXint\displaystyle\textstyle{#1}}%
{\XXint\textstyle\scriptstyle{#1}}%
{\XXint\scriptstyle\scriptscriptstyle{#1}}%
{\XXint\scriptscriptstyle\scriptscriptstyle{#1}}%
\!\int}
\def\XXint#1#2#3{{\setbox0=\hbox{$#1{#2#3}{\int}$ }
\vcenter{\hbox{$#2#3$ }}\kern-.6\wd0}}
\def\dashint{\Xint-}
\begin{document}

\title{Nodal Sets of Steklov Eigenfuntions}
\author{Katar\' ina Bellov\' a  \footnote{Max Planck Institute for Mathematics in the Sciences, Leipzig, Germany, email: bellova@mis.mpg.de} \and Fang-Hua Lin
\footnote{Courant Institute of Mathematical Sciences, New York University, email: linf@cims.nyu.edu } }

%\institute{K. Bellov\' a \at
%              Max Planck Institute for Mathematics in the Sciences \\
%              Inselstra\ss e 22, 04103 Leipzig, Germany \\
%              Tel.: +49-341-9959963\\
%              Fax: +49-341-9959585\\
%              \email{bellova@mis.mpg.de}           %  \\
%             \emph{Present address:} of F. Author  %  if needed
%           \and
%           F.-H. Lin \at
%	      Courant Institute of Mathematical Sciences \\
%	      New York University \\
%             251 Mercer Street, New York, NY 10012, USA \\
%	      Tel.: +1-212-9983137 \\
%	      Fax: +1-212-9954121 \\
%              \email{linf@cims.nyu.edu}
%}

%\date%{Received: date / Accepted: date}
% The correct dates will be entered by the editor

\maketitle

\begin{abstract}
We study the nodal set of the Steklov eigenfunctions on the boundary of a smooth bounded domain in $\R^n$ 
-- the eigenfunctions of the Dirichlet-to-Neumann map. 
Under the assumption that the domain $\Omega$ is $C^2$, we prove a doubling property for the eigenfunction $u$.
We estimate the Hausdorff $\mathcal H^{n-2}$-measure
of the nodal set of $u|_{\pO}$ in terms of the eigenvalue $\lambda$ as 
$\lambda$ grows to infinity.
In case that the domain $\Omega$ is analytic, we prove a polynomial bound 
O($\lambda^6$). 
Our arguments, which make heavy use of Almgren's frequency functions,
%Our arguments, which make heavy use of the frequency functions and their properties,
are built on the previous works %\cite{lin_garofalo_2,lin_nodal}.
[Garofalo and Lin, CPAM {\bf 40} (1987), no.~3;
Lin, CPAM {\bf 42} (1989), no.~6].
\end{abstract}

%\subjclass

\section{Introduction}

\begin{comment}
Many classical results in linear elliptic partial differential equations
are motivated by complex analysis: the maximum principle (for modulus of holomorphic functions), 
the unique continuation properties, the Cauchy integral representation formula,
and the interior gradient estimates of holomorphic functions were all generalized to the 
solutions of linear second order elliptic PDEs with suitably smooth coefficients.
A lot is known about the nodal (zero) sets of holomorphic functions in complex plane,
and it is an important general research topic to find analogues for the
nodal and critical point sets of solutions to PDEs. In some cases, properties of nodal sets 
of solutions are themselves the primary concern: in a study of moving defects in 
nematic liquid crystals by Lin \cite{lin_crystals}, the singular set of optical axes (i.e.~defects)
of liquid crystals in motion can be described precisely by the nodal set
of solutions to certain parabolic equations. In other cases, nodal sets provide important
information in the study of other properties of solutions.
\end{comment}

In this paper, we study the nodal set of the Steklov eigenfunctions on the boundary of a smooth bounded domain in $\R^n$ 
-- the eigenfunctions of the Dirichlet-to-Neumann map $\Lambda$. 
For a bounded Lipschitz domain $\Omega\subset\R^n$,
this map $\Lambda$
associates to each function $u$ defined
on the boundary $\partial \Omega$, the normal derivative of the harmonic function on $\Omega$
with boundary data $u$.
More generally, one can consider an $n$-dimensional smooth Riemannian manifold $(M,g)$
instead of $\Omega$, and replace the Laplacian 
by the Laplace-Beltrami operator $\Delta_g$.
Our methods, which build on 
the papers \cite{lin_garofalo_2,lin_nodal}, 
can be used also for solutions to more general elliptic equations and 
general boundary conditions involving oblique derivatives. 

Steklov eigenfunctions were introduced by Steklov \cite{steklov} in 1902 for bounded domains in the plane.
They represent the steady state temperature distribution on $\Omega$ such 
that the heat flux on the boundary is proportional to the temperature. 
The problem can also be interpreted as vibration of a free membrane with the mass uniformly distributed on the boundary.
Note that in this case, the eigenfunction's nodal set would represents %in this case 
the stationary points on the boundary. The studies of Steklov-type
eigenvalue problems are related to several important problems in
differential geometry, see for examples, 
\cite{escobar_yamabe,escobar_first_ev,marques_yamabe,fraser_schoen,fraser_schoen_2013}.
%[12-15, 35], [18,19]. 
They are also closely connected with some classical geometric inequalities
and Sobolev trace inequalities, see 
\cite{escobar_sobolev_ineq,escobar_first_ev,li_zhu_sobolev_ineq,li-zhu,sharp_constants_survey,payne,weinstock,yau}.
%[11, 14, 31, 32, 36, 37, 41,42]. 
It is also well-known that the Dirichlet to Neumann map is an
essential tool for studies of many inverse problems. See for examples
\cite{ammari,polterovich_sloshing,calderon,partial_n_d_map,uhlmann_2007,sylvester_uhlmann_ann,fox_kuttler}.
%[2, 4, 6, 8, 30, 39] and [17]. 
Generalizations of Dirichlet to Neumann
maps are also related to elliptic operators of fractional order \cite{caffarelli_silvestre,caffarelli_obstacle_for_frac_lapl,chang-gonzalez,graham-zworski}.
%[7,23]. 
%****add a couple references here: works by Luis Caffarelli and Luis 
%****Silvestre on Fractional elliptic operators(including monotonicity 
%****formula).
Though the first Steklov eigenfunction is %are 
particularly related to
geometric applications and extreme inequalities, higher Steklov
eigenfunctions and their distributions have also been studied by
various authors, see 
\cite{bandle,dittmar,hersch_payne,fraser_schoen_2013,pleijel,sandgren} or 
\cite[Chapter 17, Section 5]{hormander}. %****
%[3, 9, 29],
%**** add references on Weyl asympototics formula and Sturm-Louville 
%**** stuffs.
%see also [19]. 
The present paper is
devoted to a general study of nodal sets of Steklov eigenfunctions.
It is our first attempt to understand nodal sets of solutions of nonlocal
elliptic operators or pseudodifferential operators. %**** %,

% %-------------------------------

The paper is organized as follows. The remainder of the introduction summarizes the main results
and fixes some basic notation. Section \ref{sec_frequency} recalls those results %by Garofalo and one of the authors % Lin 
from \cite{lin_garofalo,lin_garofalo_2} about the frequency function,
which we will need in our paper.
In Section \ref{sec_doubling} we prove a doubling condition for Steklov eigenfunctions
on a $C^2$-domain  $\Omega$, which will serve as a cornerstone for the nodal set estimate in the analytic setting.
In Section \ref{chap_analytic} we prove an explicit estimate for the nodal set of Steklov eigenfunctions in the case that $\Omega$ has analytic boundary.
%Chapter 3 
%****We also discuss the case when $\Omega$ is smooth, but not analytic, and outline a possible direction of research in that setting.****

\subsection{Problem setting and main results} \label{problem_setting}
Let $\Omega\subset\R^n$ be a Lipschitz domain. 
The Dirichlet-to-Neumann operator $\Lambda: H^{1/2}(\pO) \to H^{-1/2}(\pO)$
is defined as follows. For $f\in H^{1/2}(\pO)$, we solve the Laplace equation
\begin{equation*}
\begin{aligned}
  \Delta u = 0 \quad &\text{ in } \Omega, \\
  u = f \quad &\text{ on } \pO.
\end{aligned}
\end{equation*}
This gives a solution $u\in H^{1}({\Omega})$, and we set $(\Lambda f)$ to be the trace of $\frac{\partial u}{\partial \nu}$ on $\pO$,
where $\nu$ is the exterior unit normal.
We obtain a bounded self-adjoint operator 
from $H^{1/2}(\pO)$ to $H^{-1/2}(\pO)$.
It has a discrete spectrum $\{\lambda_j \}_{j=0}^{\infty}$, $0=\lambda_0<\lambda_1\leq\lambda_2\leq\lambda_3\leq...$,
$\lim_{j\to\infty}\lambda_j = \infty$. The eigenfunctions of $\Lambda$ 
(called Steklov eigenfunctions) corresponding to eigenvalue $\lambda$ 
can be identified with the trace on $\pO$ of their harmonic extensions
to $\Omega$, which satisfy
\begin{equation} \label{main}
\begin{aligned}
\Delta u &= 0  &\text{ in } \Omega, \\
\frac{\partial u}{\partial \nu} &= \lambda u &\text{ on } \partial\Omega.
\end{aligned}
\end{equation}

The main goal of this paper 
is to estimate the size (the Hausdorff $\mathcal H^{n-2}$-measure)
of the nodal set of $u|_{\pO}$ in terms of $\lambda$ as $\lambda$ grows to infinity, provided $\Omega$ is fixed.
In Section \ref{chap_analytic} we prove the following bound in case that $\Omega$ has analytic boundary:

\begin{thm}\label{thm_steklov_analytic}
 Let $\Omega \subset \R^n$ be an analytic domain. Then there exists a constant $C$ depending
only on $\Omega$ and $n$ such that for any $\lambda>0$ and $u$ 
which is a (classical) solution to \eqref{main} there holds
\begin{equation} \label{steklov_analytic}
 \mathcal H^{n-2} (\{x\in\pO: u(x)=0\}) \leq C \lambda^6.
\end{equation}
\end{thm}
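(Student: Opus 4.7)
My plan is to follow the scheme of Lin \cite{lin_nodal}: use the doubling property from Section \ref{sec_doubling} to control the ``frequency'' of $u$, use analyticity of $\pO$ to extend $u$ harmonically to a complex neighborhood of $\overline{\Omega}$, and then count zeros on $\pO$ by complex-analytic methods.

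First, analyticity of $\pO$ lets me straighten the boundary near each $x_0\in\pO$ by an analytic diffeomorphism. In the new variables, $u$ is harmonic in a half-ball with the oblique, analytic boundary condition $\partial u/\partial\nu = \lambda u$. A Cauchy--Kowalevskaya/reflection argument (equivalently, Morrey--Nirenberg analyticity up to the boundary) then extends $u$ to a harmonic function $\tilde u$ on a complex neighborhood $U\subset\C^n$ of $\overline{\Omega}$ whose size depends only on $\Omega$. Iterating the doubling inequality of Section \ref{sec_doubling} produces an Almgren-type frequency $N=N(u)$ bounded by a polynomial in $\lambda$, and Cauchy estimates convert this into a complex sup-bound of the form
$$\sup_{U}|\tilde u|\leq e^{CN}\,\|u\|_{L^2(\Omega)}.$$
This is the single ingredient that lets us compare $u$ on $\pO$ with its behavior on a fixed-size complex neighborhood.

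Second, I would count zeros by a slicing/Jensen argument. For each small ball $B\subset\R^n$ centered on $\pO$, fix a family of complex $2$-planes transverse to $\pO$. Restricting $\tilde u$ to each such $2$-plane and applying Jensen's formula bounds the number of zeros of the restriction in a disc of definite size by $CN$. An integral-geometric averaging over the family of $2$-planes converts these pointwise zero counts into the estimate
$$\mathcal H^{n-2}\bigl(\{u=0\}\cap\pO\cap B\bigr)\leq CN.$$
Covering $\pO$ by a bounded (independent of $\lambda$) number of such balls and summing yields $\mathcal H^{n-2}(\{u=0\}\cap\pO)\leq CN\leq C\lambda^{6}$, the exponent $6$ being the polynomial degree of $N$ in $\lambda$ that is forced upon us by Section \ref{sec_doubling}.

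The main obstacle is making the $\lambda$-dependence in the frequency bound and in the complex-growth estimate fully explicit. The Steklov condition perturbs the classical Almgren monotonicity formula in a $\lambda$-dependent way, so the standard interior frequency bound must be replaced by a variant that absorbs the boundary term $\lambda u^{2}$ without losing polynomial control; this is exactly what the doubling property of Section \ref{sec_doubling} supplies. In addition, the analytic reflection across $\pO$ must be carried out \emph{uniformly} in $\lambda$: the width of the complex neighborhood $U$ and the constants in the Cauchy estimates cannot be allowed to degrade as $\lambda\to\infty$. Once these two ingredients are secured, the exponent $6$ in \eqref{steklov_analytic} follows from careful, though essentially routine, bookkeeping through Steps 1--2.
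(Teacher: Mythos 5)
Your outline follows the paper's architecture (doubling $\Rightarrow$ frequency control, analytic extension and complexification, Jensen-type zero counting on slices, integral geometry), but two quantitative points in your plan are not merely ``routine bookkeeping'' --- they are where the exponent $6$ actually comes from, and as stated your version of them does not work.

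First, the uniform-in-$\lambda$ complex neighborhood you demand cannot be secured. The analytic continuation of $u$ across $\pO$ is governed by the boundary condition $\p u/\p\nu=\lambda u$, so the derivative bounds one obtains at a boundary point are of the form $|D^{\alpha}u(x_0)|\leq C\,\alpha!\,(C\lambda/r)^{|\alpha|}\bigl(\dashint_{B(x_0,r/\lambda)}u^2\bigr)^{1/2}$ (Proposition \ref{derivatives_for_complexification}); the Taylor series therefore converges only in a complex ball of radius $\sim r_0/\lambda$, and the width of the complexification necessarily degrades as $\lambda\to\infty$. The paper accepts this and works entirely at scale $1/\lambda$. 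Relatedly, to apply the Jensen lemma you must normalize at a point of $\pO$ where $|w|$ dominates the \emph{boundary} $L^2$-average, while the complexification bounds $\sup|w|$ by a \emph{solid} $L^2$-average; bridging these costs another quantitative Cauchy-uniqueness step (Corollary \ref{bdry_controls_solid}, itself worth a factor $2^{C\lambda^5}$), which your proposal omits.

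Second, your covering count is wrong, and with it the origin of the exponent. Because everything lives at scale $R\sim1/\lambda$, one needs $C\lambda^{n-1}$ balls to cover $\pO$, not a bounded number. Each ball contributes, via the integral-geometric formula, at most $CR^{n-2}\cdot(\text{zeros per line})\leq C\lambda^{-(n-2)}\lambda^{5}$, and summing over $C\lambda^{n-1}$ balls gives $C\lambda^{n-1}\cdot\lambda^{-(n-2)}\cdot\lambda^{5}=C\lambda^{6}$. The exponent $6$ is thus $5$ (the doubling/frequency exponent from Theorem \ref{thm_doubling_bdry}) plus $1$ (the price of covering at scale $1/\lambda$), not ``the polynomial degree of $N$ in $\lambda$'' as you assert; your accounting, taken literally, would claim $\lambda^{5}$ on the strength of a fixed-size complexification that is not available.
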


\noindent
The scaling $\lambda^6$ in \eqref{steklov_analytic} is not optimal. Actually, our proof 
gives approximately $\lambda^{5.6}$, but we believe that the optimal scaling is $\lambda$.
However, even this polynomial bound is valuable in a problem like this -- the main difficulty
in the estimate is to avoid an exponential bound $e^{C \lambda}$.

As in \cite{lin_nodal} and \cite{han_lin_slices}, we use
a doubling condition, i.e.~a control of the $L^2$-norm of $u$ on a ball $B_{2r}(x)$ by the $L^2$-norm of $u$
on a smaller ball $B_r(x)$, as the crucial tool
to estimate the nodal set. We prove a doubling condition in the following form in Section \ref{sec_doubling}:
\begin{thm} \label{thm_doubling_bdry}
Let $\Omega \subset \R^n$ be a $C^2$ domain.
 Then there exist constants $r_0, C>0$ depending only on $\Omega$ and $n$ such that 
for any $r\leq r_0/\lambda$, $x\in \pO$, and $u$ 
which is a (classical) solution to \eqref{main}, there holds
\begin{equation} \label{doubling_bdry}  
\int_{B(x,2r) \cap \pO}  u^2 \leq  2^{C\lambda^5} \int_{ B(x,r) \cap \pO}  u^2.
\end{equation}
\end{thm}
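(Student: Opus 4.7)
The plan is to adapt the Almgren frequency-function strategy of Garofalo--Lin and Lin (as cited in the excerpt) to the Steklov mixed-boundary setting, with careful bookkeeping of the $\lambda$-dependence.

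\emph{Step 1: Localization and boundary flattening.} Fix $x_0 \in \partial\Omega$. Using the $C^2$ regularity of $\partial\Omega$, I would introduce local coordinates that straighten $\partial\Omega$ near $x_0$. In these coordinates $u$ solves a divergence-form uniformly elliptic equation $\operatorname{div}(A(x)\nabla u) = 0$ in a half-ball $B_\rho^+$, where $A$ is Lipschitz with $A(0) = I$ and $\|A - I\|_{L^\infty(B_r)} \leq Cr$, and the Steklov condition becomes $a_{ij}\nu_i\partial_j u = \lambda b(x)\, u$ on the flat portion $\Gamma_\rho = B_\rho \cap \{x_n = 0\}$, with $b$ a smooth positive factor from the change of variables. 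For $r \leq r_0/\lambda$ both the deviation of $A$ from $I$ and the Steklov contribution appear as small perturbations.

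\emph{Step 2: A Steklov-adapted frequency.} I would set
\begin{equation*}
D(r) = \int_{B_r^+} \langle A\nabla u, \nabla u\rangle \ud x - \lambda \int_{\Gamma_r} b\, u^2 \ud \sigma, \qquad H(r) = \int_{\partial B_r \cap \{x_n > 0\}} u^2 \ud \sigma,
\end{equation*}
and $N(r) = r\, D(r)/H(r)$. Integration by parts against the boundary condition shows that $D(r)$ equals the spherical flux $\int_{\partial B_r \cap \{x_n > 0\}} u\,(A\nabla u \cdot \nu)\ud\sigma$, so $N$ is the natural quantity measuring the vanishing order of $u$ at $x_0$. Computing $(\log H)'$ and $(\log D)'$ yields $H'/H = (n-1)/r + 2N/r + O(1)$ and a comparable identity for $D$, the $O(1)$ errors absorbing the Lipschitz deviation of $A$ and a Steklov correction of size $\lambda r \leq r_0$. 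Combining these produces almost-monotonicity on $(0,r_0/\lambda]$: $N(r) \leq C_1\, N(r_0/\lambda) + C_2$ uniformly in $\lambda$.

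\emph{Step 3: Initial-scale bound and doubling.} It remains to bound $N$ at $r = r_0/\lambda$. I would use the global energy identity $\int_\Omega |\nabla u|^2 = \lambda \int_{\partial\Omega} u^2$, a Rellich-type identity on $\Omega$, and repeated application of Caccioppoli and trace inequalities up to the boundary, tracking each power of $\lambda$ that enters at the scale $1/\lambda$. The outcome is a polynomial bound $N(r_0/\lambda) \leq C\lambda^5$. Integrating the differential inequality $(\log H)' \leq 2N(r)/r + O(1)$ from $r$ to $2r$ then gives $H(2r) \leq 2^{C\lambda^5} H(r)$, and converting from the spherical height $H$ to the surface $L^2$-norm on $\partial\Omega \cap B_r$ via standard trace considerations (together with the doubling of $H$ itself) yields \eqref{doubling_bdry}.

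The main obstacle is the third step. Establishing almost-monotonicity in Step 2 is a relatively routine perturbation of the interior frequency theory from \cite{lin_garofalo_2}. The genuine work is to avoid the naive bound $e^{C\lambda}$ on $N(r_0/\lambda)$: one must iterate local elliptic estimates and Rellich identities while keeping control of every power of $\lambda$ at each step. The authors' own comment that the exponent $5$ is not sharp (and that their proof in fact gives $\lambda^{5.6}$, while the conjectured optimum is $\lambda$) signals that this accounting is already delicate and any improvement past $\lambda^{5.6}$ will require a substantively new idea rather than a refinement of the present approach.
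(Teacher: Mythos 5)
There is a genuine gap, and it sits exactly where you place the ``main obstacle'': your Step 3 names the difficulty but does not contain the idea that resolves it. A bound $N(r_0/\lambda)\leq C\lambda^5$ at an \emph{arbitrary} boundary point cannot come from the global energy identity, Rellich identities, Caccioppoli and trace inequalities alone, because all of those are consistent with $\int_{B(x_0,r_0/\lambda)}u^2$ being exponentially small compared with $\int_{\Omega}u^2$ at some particular $x_0$; ruling that out is a quantitative unique continuation statement, not an energy estimate. The paper's mechanism is global and geometric: (i) a pigeonhole argument produces one special boundary point $y_*$ where a ball of radius $\sim 1/\lambda$ captures a $\lambda^{-O(1)}$ fraction of $\int_\Omega u^2$; (ii) the doubling information is then propagated from $y_*$ to any target point $y_0$ through the \emph{interior} of $\Omega$ along a chain of balls of fixed radius $\delta$, where $u$ is genuinely harmonic and Almgren's frequency is genuinely monotone, at a cost of only $2^{C\lambda\log\lambda}$; (iii) one then descends from scale $\delta$ to scale $1/\lambda$ near $y_0$ in $\sim\log\lambda$ geometric steps, each step roughly squaring the doubling constant, which is precisely where the exponent $1+\log 2/\log(6/5)\approx 4.8$ (hence $\lambda^5$) is generated. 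Without steps (i)--(iii) or an equivalent substitute, your Step 3 is an unproved assertion, and the local route you gesture at would revert to the exponential bound $e^{C\lambda}$ you are trying to avoid.

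Two further points. First, the paper does not work with a Steklov-adapted frequency on a half-ball at all: it sets $v=ue^{\lambda d(x)}$, which converts the Steklov condition into a homogeneous Neumann condition, reflects $v$ across $\partial\Omega$, and then applies the unmodified interior Garofalo--Lin theory to the resulting elliptic equation (with $\|b\|_\infty\leq C\lambda$, $\|c\|_\infty\leq C\lambda^2$, valid at scales $\lesssim 1/\lambda$ after rescaling). Your mixed-boundary frequency in Step 2 is a legitimate alternative device, but its almost-monotonicity uniformly in $\lambda$ would need a proof, whereas the reflection reduces everything to cited results. Second, the final passage from the solid doubling $H(2r)\leq 2^{C\lambda^5}H(r)$ to the surface inequality \eqref{doubling_bdry} is not a ``standard trace consideration'': the needed direction is to bound the \emph{solid} $L^2$ norm from above by the \emph{boundary} $L^2$ norm, which the paper obtains from a quantitative Cauchy uniqueness lemma (Lemma 4.3 of \cite{lin_nodal}) combined with an interpolation inequality $\|\nabla w\|_{L^2(\R^{n-1})}\leq\eta\|w\|_{H^2(\R^n)}+C\eta^{-2}\|w\|_{L^2(\R^{n-1})}$ and interior $H^2$ estimates; the trace theorem by itself only gives the reverse, useless, direction.
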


To put our work into context, note that our
%this specific 
problem is similar in nature to the classical question of estimating the size
of nodal sets of eigenfunctions of the Laplace operator in a compact manifold. 
The following conjecture was proposed by Yau in \cite{yau}: 
\begin{conj} \label{conj_yau}
Suppose
$(M^n, g)$ is a smooth $n$-dimensional connected and compact Riemannian
manifold without boundary. Consider an eigenfunction $u$
corresponding to the eigenvalue $\lambda$, i.e.~,
\begin{equation*}
 \Delta_g u + \lambda u = 0 \quad \text{ on } M.
\end{equation*}
Then there holds
\begin{equation*}
c_1 \sqrt{\lambda} \leq \mathcal{H}^{n-1}(\{x \in M; u(x) = 0 \}) \leq c_2 \sqrt{\lambda},
\end{equation*}
where $c_1$ and $c_2$ are positive constants depending only on $(M, g)$.
\end{conj}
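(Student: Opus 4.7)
The plan is to handle the upper and lower bounds separately, since they rely on completely different ideas. A useful preliminary step common to both directions is to lift the problem to one higher dimension via the extension $U(x,t) = u(x) \cosh(\sqrt{\lambda}\, t)$ on $M \times \mathbb{R}$, which is harmonic with respect to the product metric. The nodal set of $u$ on $M$ is then a hyperplane slice of the nodal set of $U$, and the behaviour of $U$ on geodesic balls is amenable to the same Almgren-type frequency techniques that drive the rest of this paper.

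For the upper bound, the first step is to analyse the frequency function $N(x_0, r)$ of $U$ and derive a doubling inequality analogous to Theorem \ref{thm_doubling_bdry}. The relation $\Delta_g u = -\lambda u$ forces $N(x_0, r) \lesssim \sqrt{\lambda}$ on balls of radius $r \lesssim 1/\sqrt{\lambda}$, so $U$ grows at most polynomially of degree $\sqrt{\lambda}$. In the analytic setting one then complexifies: $U$ admits a holomorphic extension to a neighborhood of controlled complex radius, and Jensen's formula applied on one-dimensional slices counts the zeros on each slice proportionally to the frequency. Covering $M$ by $O(\lambda^{n/2})$ balls of radius $\lambda^{-1/2}$ and summing the per-ball $\mathcal{H}^{n-1}$-contributions should yield the desired $\mathcal{H}^{n-1}(\{u=0\}) \lesssim \sqrt{\lambda}$, in the spirit of Donnelly--Fefferman.

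For the lower bound, I would rely on three classical ingredients. Courant's nodal domain theorem, combined with Weyl's law, produces at most $C \lambda^{n/2}$ nodal components. On each such component, $u$ is a Dirichlet eigenfunction with eigenvalue $\lambda$, so by Faber--Krahn the volume is at least $c \lambda^{-n/2}$, and a relative isoperimetric inequality bounds the local $\mathcal{H}^{n-1}$-measure of $\{u = 0\}$ from below. Summing over components and using the mean value property of eigenfunctions---every ball of radius $\gtrsim 1/\sqrt{\lambda}$ must contain a zero of $u$---gives a lower bound of the form $\mathcal{H}^{n-1}(\{u=0\}) \gtrsim \lambda^{(3-n)/4}$, which is sharp in dimension two by Br\"uning's theorem but weaker than the conjecture in higher dimensions.

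The hard part in both directions is to go beyond the analytic category and beyond dimension two. For the upper bound, the smooth case offers no Jensen's formula to bound zero counts, and one must replace the holomorphic extension with a delicate combinatorial subdivision into ``good'' cubes (on which the frequency is controlled) and ``bad'' cubes (into which one recurses). For the lower bound in dimension $n \geq 3$, the gap between $\lambda^{(3-n)/4}$ and $\sqrt{\lambda}$ requires a much finer geometric understanding of individual nodal components---refined maximum-principle and doubling arguments at the scale $\lambda^{-1/2}$, together with quantitative unique continuation---which is the principal obstacle I would expect to face.
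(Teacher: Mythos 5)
The statement you are addressing is labeled as a \emph{conjecture} in the paper, and the paper does not prove it: the authors record it as Yau's conjecture, note that Donnelly and Fefferman established it when $(M^n,g)$ is real-analytic, and state explicitly that it remains open for merely smooth metrics, where the known upper bound is still exponential (Hardt--Simon). So there is no proof in the paper to compare yours against, and your proposal cannot stand as a proof because, by your own admission, it does not close either direction. For the upper bound, the argument you outline --- the harmonic lift $U(x,t)=u(x)\cosh(\sqrt{\lambda}\,t)$, a frequency bound $N\lesssim\sqrt{\lambda}$ on balls of radius $\sim\lambda^{-1/2}$, holomorphic extension to a complex neighborhood of controlled radius, Jensen's formula on one-dimensional slices, and a covering by $O(\lambda^{n/2})$ balls --- is exactly the Donnelly--Fefferman scheme, and it genuinely requires analyticity of $(M,g)$; the ``delicate combinatorial subdivision into good and bad cubes'' you invoke for the smooth category is not an argument but a placeholder for the open problem itself. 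For the lower bound, the chain Courant $+$ Faber--Krahn $+$ relative isoperimetric inequality that you describe yields only $\mathcal{H}^{n-1}(\{u=0\})\gtrsim \lambda^{(3-n)/4}$, which, as you correctly observe, falls short of the conjectured $c_1\sqrt{\lambda}$ in every dimension $n\geq 3$.

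In short, what you have written is an accurate survey of the known partial results and of where the difficulties lie, not a proof. The two gaps you yourself name --- the upper bound beyond the analytic category, and the sharp lower bound in dimension $n\geq 3$ --- constitute the entire content of the open problem. If the intent is to match the role this statement plays in the paper, namely motivation and context for the Steklov nodal set estimates, the correct treatment is to state it as a conjecture and cite Donnelly--Fefferman for the analytic case, exactly as the authors do.
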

\noindent This conjecture was proved in case that $(M^n, g)$ is analytic by Donnelly and Fefferman in \cite{donnelly_fefferman}.
It is still open whether Conjecture \ref{conj_yau}
holds if $(M^n, g)$ is only smooth. The known results for the smooth case are far from optimal,
the upper bound remaining exponential (see \cite{hardt_simon}). One may 
ask similar questions for Steklov eigenfunctions or general solutions of 
nonlocal elliptic operators.

Another related problem has been studied for the Neumann eigenfunctions on 
a piecewise analytic plane domain $\Omega\subset\R^2$
in \cite{zelditch_nodal_lines}. 
This paper is concerned about the asymptotics of the number of 
nodal points of the eigenfunctions on the boundary $\partial \Omega$, 
as the eigenvalue $\lambda$ increases to infinity. %They 
It proves that this number is bounded above by $C_{\Omega}\lambda$.

\subsection{Notation}
Throughout this paper, 
$B_r=B(0,r)$ will denote the open ball in $\R^n$ with center $0$ and radius $r$,
and $B(x,r)$ the open ball in $\R^n$ with center $x$ and radius $r$. We use
$B^{n-1}(x,r)$ for ball in $\R^{n-1}$ and $B^{\C^k}(z,r)$
for ball in $\C^k$ ($k=n,n-1$).

We denote the coordinates of a vector $b\in \R^n$ by $b^i$, $i=1,2,\dots,n$.
We abbreviate the partial derivatives as $\frac{\p w}{\p x_i} = w_{x_i}$,
$\frac{\p^2 w}{\p x_i \p x_j} = w_{x_i,x_j}$.

We use the letter $C$ as a constant (usually depending only on $n$ and $\Omega$),
which can change from line to line. 

For a given $\Omega$, there are only finitely many eigenvalues $\lambda$ 
of the Dirichlet-to-Neumann map which
are less than $1$. Hence, when we are proving upper bounds in the form of $C \lambda^k$ with specific $k$
and $C$ depending on $\Omega$,
without loss of generality we can assume that the eigenvalue
$\lambda$ is larger or equal to $1$, and we often do so without pointing it out.

%----------------------------------------------------------------------------------------------------------------------------

\begin{comment}
\subsection{Global Integral quantities}
In this section, we will derive some properties of the gradient of Steklov eigenfunctions.
Although we will not use them in the proofs of the main results, they illustrate the special
structure 
of the Steklov eigenfunctions.

Let $u$ be a solution of \eqref{main} on a Lipschitz domain $\Omega$. By normalization, assume
\begin{equation}   \label{normalization}
 \int_{\pO} u^2=1.
\end{equation}

\begin{prop}
 \begin{equation}   \label{integral1}
 \int_{\Omega} |\nabla u|^2 = \lambda.
\end{equation}
\end{prop}

\begin{proof}
By integration by parts and using \eqref{main} and \eqref{normalization}, we have
\begin{equation*}   
 \int_{\Omega} |\nabla u|^2 = \int_{\pO} u \frac{\p u}{\p \nu} - \int_{\Omega} u \Delta u = \int_{\pO} u \frac{\p u}{\p \nu} = \lambda \int_{\pO} u^2 = \lambda.
\end{equation*}
\end{proof}

Next, we are interested in the size of $|\nabla u|^2$ integrated over $\pO$. For the gradient
in the normal direction we have

\begin{equation} \label{integral_normal}
 \int_{\pO} \left|\frac{\p u}{\p \nu} \right|^2 = \lambda^2 \int_{\pO} u^2 = \lambda^2.
\end{equation}
We will show that the gradient in the tangential directions is of comparable size.

\end{comment}

%----------------------------------------------------------------------------------------------------------------------------------------------

\section{The Frequency Function} \label{sec_frequency}
In this section we review the theory developed in \cite{lin_garofalo} and \cite{lin_garofalo_2}
about the frequency functions for both harmonic functions and solutions to general elliptic equations.
%We will heavily rely on these results in the following section (Doubling Condition for Steklov Eigenfunctions).
We will use the frequency function as the main tool to derive the doubling condition,
as e.g.~in \cite{lin_garofalo,lin_garofalo_2,lin_nodal,han_harmonic}.

\subsection{Frequency function and doubling condition for harmonic functions}

\begin{definition}
For a harmonic function $u$ on ball $B_1$ and $r<1$, the frequency
$N(r)$ is defined as
\begin{equation} \label{frequency}
N(r)= \frac{r D(r)}{H(r)},
\end{equation}
where
\begin{align*}
D(r)&= \int_{B_r} |\grad u|^2 dx, \\
H(r)&= \int_{\partial B_r} u^2 d \sigma,
\end{align*}
where $ \sigma$ is the surface measure.
For a harmonic function on a ball $B(a,r)$, $N(a,r)$, $D(a,r)$ and $H(a,r)$ are defined analogously.
\end{definition}
The frequency is a way how to measure the growth of a harmonic function. If $u$ is a homogeneous
harmonic polynomial, its frequency is exactly its degree. See \cite{han_harmonic} or \cite{book}
for more examples. Let us list some important properties. We refer to the survey paper \cite{han_harmonic}
for proofs, although they are known much longer and sketches of the proofs can be found
e.g.~in \cite{lin_garofalo} or \cite{lin_nodal}. The following monotonicity property of the frequency function
is attributed to F.~J.~Almgren, Jr., \cite{almgren}.

\begin{prop}[\cite{han_harmonic}, Theorem 1.2] \label{monotonicity_harm}
 Let $u$ be a harmonic function in $B_1$. Then $N(r)$ is a nondecreasing function of $r\in (0,1)$.
\end{prop}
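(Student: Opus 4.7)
The plan is to show that $(\log N(r))' \geq 0$ by computing $H'(r)$ and $D'(r)$ separately and then assembling the logarithmic derivative of $N$. Throughout we may assume $u \not\equiv 0$, so by unique continuation $H(r) > 0$ for all $r \in (0,1)$ and $N$ is well-defined.

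First I would compute $H'(r)$. Writing $H(r) = r^{n-1}\int_{\partial B_1} u(r\omega)^2 \, d\sigma(\omega)$ and differentiating in $r$, I would get
\begin{equation*}
H'(r) = \frac{n-1}{r} H(r) + 2\int_{\partial B_r} u\, u_\nu \, d\sigma,
\end{equation*}
where $u_\nu$ denotes the outward radial derivative. Green's identity, combined with $\Delta u = 0$, gives $\int_{\partial B_r} u\, u_\nu \, d\sigma = \int_{B_r} |\nabla u|^2 \, dx = D(r)$, so
\begin{equation*}
H'(r) = \frac{n-1}{r} H(r) + 2 D(r).
\end{equation*}

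Next I would compute $D'(r) = \int_{\partial B_r} |\nabla u|^2 \, d\sigma$ and use the Rellich/Pohozaev identity applied to the harmonic $u$ on $B_r$ (obtained by integrating $\operatorname{div}(x |\nabla u|^2 - 2(x\cdot \nabla u)\nabla u)$ over $B_r$ and using $\Delta u = 0$). This yields
\begin{equation*}
D'(r) = \frac{n-2}{r} D(r) + 2 \int_{\partial B_r} u_\nu^2 \, d\sigma.
\end{equation*}

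Combining these two identities with $N(r) = r D(r)/H(r)$, I would form the logarithmic derivative and find, after the $1/r$ terms cancel cleanly,
\begin{equation*}
\frac{N'(r)}{N(r)} = \frac{1}{r} + \frac{D'(r)}{D(r)} - \frac{H'(r)}{H(r)} = \frac{2\int_{\partial B_r} u_\nu^2 \, d\sigma}{\int_{\partial B_r} u\, u_\nu \, d\sigma} - \frac{2\int_{\partial B_r} u\, u_\nu \, d\sigma}{\int_{\partial B_r} u^2 \, d\sigma}.
\end{equation*}
The Cauchy--Schwarz inequality on $\partial B_r$ applied to the pair $(u, u_\nu)$ immediately gives $\left(\int_{\partial B_r} u\, u_\nu\right)^2 \leq \int_{\partial B_r} u^2 \cdot \int_{\partial B_r} u_\nu^2$, which is exactly what is needed to conclude $N'(r)/N(r) \geq 0$, hence $N$ is nondecreasing.

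The only non-routine step is producing the Rellich--Pohozaev identity in the form that gives the $D'(r)$ formula with the tangential/normal split; once that identity is in hand, the rest is a short arrangement and a single application of Cauchy--Schwarz. The minor edge cases (handling $r$ where $D(r)=0$, which forces $u\equiv 0$ on $B_r$ by harmonicity and hence rules out the case by unique continuation) I would address in a brief remark at the end.
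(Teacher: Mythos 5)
Your argument is correct and is exactly the standard Almgren monotonicity proof that the paper cites from \cite{han_harmonic} (Theorem 1.2) rather than reproving: the identities $H'(r)=\frac{n-1}{r}H(r)+2D(r)$ and $D'(r)=\frac{n-2}{r}D(r)+2\int_{\partial B_r}u_\nu^2$ via the Rellich identity, followed by Cauchy--Schwarz, are precisely the cited route. One trivial correction for your closing remark: $D(r)=0$ forces $u$ to be \emph{constant} (not necessarily zero) on $B_r$, hence constant on $B_1$ by unique continuation, in which case $N\equiv 0$ and monotonicity holds vacuously.
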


\begin{prop}[\cite{han_harmonic}, Corollary 1.4] \label{prop_H_prime}
Let $u$ be a harmonic function in $B_1$. For any $r\in(0,1)$, there holds
\begin{equation}\label{H_prime}
 \frac{d}{dr} \left( \log \frac{H(r)}{r^{n-1}} \right) = 2\frac{N(r)}{r}.
\end{equation}
%\end{cor}
%\noindent
Integrating \eqref{H_prime}, we obtain that for any $0<r_1<r_2<1$, there holds
\begin{equation} \label{H_eq}
\frac{H(r_2) }{r_2^{n-1}} = \frac{H(r_1) }{r_1^{n-1}} \exp \left(2\int_{r_1}^{r_2} \frac{N(r)}{r}\right).
\end{equation}
%and
Using the monotonicity of $N$ (Proposition \ref{monotonicity_harm}), it follows that
\begin{equation} \label{H_ineq}
\frac{H(r_2) }{r_2^{n-1}} \leq  \left(\frac{r_2}{r_1}\right)^{2N(r_2)} \frac{H(r_1) }{r_1^{n-1}} .
\end{equation}
\end{prop}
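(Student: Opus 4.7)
The plan is to derive \eqref{H_prime} by a direct computation, and then obtain \eqref{H_eq} by integration and \eqref{H_ineq} by combining it with the monotonicity in Proposition \ref{monotonicity_harm}. The key observation is that the $r^{n-1}$ normalization in \eqref{H_prime} is precisely what removes the scaling from the surface measure on $\partial B_r$, so that only the genuine growth of $u$ along rays remains.

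First I would rescale and write, using the change of variables $y=r\omega$ with $\omega \in \partial B_1$,
\begin{equation*}
\frac{H(r)}{r^{n-1}} = \int_{\partial B_1} u(r\omega)^2 \, d\sigma(\omega).
\end{equation*}
Since the integration domain is now independent of $r$, differentiation under the integral sign gives
\begin{equation*}
\frac{d}{dr}\!\left(\frac{H(r)}{r^{n-1}}\right) = 2 \int_{\partial B_1} u(r\omega)\, \nabla u(r\omega)\cdot \omega \, d\sigma(\omega)
= \frac{2}{r^{n-1}} \int_{\partial B_r} u\, \frac{\partial u}{\partial\nu}\, d\sigma.
\end{equation*}
The harmonicity of $u$ then enters via integration by parts: because $\Delta u=0$, Green's identity yields
\begin{equation*}
\int_{\partial B_r} u\, \frac{\partial u}{\partial \nu}\, d\sigma = \int_{B_r} |\nabla u|^2 \, dx = D(r).
\end{equation*}
Combining these two displays and dividing by $H(r)/r^{n-1}$ gives $\frac{d}{dr}\log\frac{H(r)}{r^{n-1}} = \frac{2 D(r)}{H(r)} = \frac{2 N(r)}{r}$, which is \eqref{H_prime}.

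Next I would integrate \eqref{H_prime} from $r_1$ to $r_2$ and exponentiate, which directly produces the identity \eqref{H_eq}. Finally, for \eqref{H_ineq} I would invoke Proposition \ref{monotonicity_harm}: since $N$ is nondecreasing, $N(r) \leq N(r_2)$ for all $r \in [r_1, r_2]$, so
\begin{equation*}
2\int_{r_1}^{r_2} \frac{N(r)}{r}\, dr \;\leq\; 2 N(r_2) \int_{r_1}^{r_2} \frac{dr}{r} \;=\; 2 N(r_2)\, \log\!\frac{r_2}{r_1},
\end{equation*}
and plugging this back into \eqref{H_eq} yields \eqref{H_ineq}. There is no serious obstacle here; the only point that requires care is justifying the differentiation under the integral sign (which is fine because $u$ is harmonic, hence smooth in $B_1$) and correctly identifying the radial derivative $\nabla u\cdot \omega$ with the outward normal derivative on $\partial B_r$, so that Green's identity applies cleanly.
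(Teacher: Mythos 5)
Your proof is correct and is essentially the standard argument that the paper defers to its cited reference \cite{han_harmonic}: rescale $H(r)/r^{n-1}$ to the unit sphere, differentiate under the integral, apply Green's identity to identify the boundary term with $D(r)$, then integrate and use the monotonicity of $N$. Nothing to add.
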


\begin{cor} \label{cor_reverse_doubling}
Let $u$ be a harmonic function in $B_1$.
Then the function 
\begin{equation} \label{reverse_doubling}
r \mapsto \dashint_{\partial B_r} u^2
\end{equation}
is increasing with respect to $r$, $r\in (0,1)$, and
\begin{equation} \label{reverse_doubling2}
 \dashint_{B_r} u^2 \leq \dashint_{\p B_r} u^2.
\end{equation}

\end{cor}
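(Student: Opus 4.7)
The corollary asks for two things: first, that $r \mapsto \dashint_{\partial B_r} u^2$ is non-decreasing, and second, that $\dashint_{B_r} u^2 \le \dashint_{\partial B_r} u^2$. Both would follow quickly from the identity \eqref{H_prime} stated in Proposition \ref{prop_H_prime}, so the plan is simply to package that identity correctly and then integrate.

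For the first claim, I would observe that $\dashint_{\partial B_r} u^2 = H(r) / (\omega_{n-1} r^{n-1})$, where $\omega_{n-1}$ denotes the surface area of the unit sphere in $\R^n$, so the monotonicity I need is exactly the monotonicity of $H(r)/r^{n-1}$. By \eqref{H_prime},
\begin{equation*}
\frac{d}{dr}\log\frac{H(r)}{r^{n-1}} = \frac{2N(r)}{r},
\end{equation*}
and since $N(r) = rD(r)/H(r) \ge 0$ (both $D$ and $H$ are non-negative), the right-hand side is non-negative. Thus $H(r)/r^{n-1}$ is non-decreasing on $(0,1)$, proving the first part.

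For the second claim, I would write $\int_{B_r} u^2 = \int_0^r H(s) \, ds$ by polar coordinates, and use $|B_r| = \omega_{n-1} r^n / n$. The inequality $\dashint_{B_r} u^2 \le \dashint_{\partial B_r} u^2$ then reduces to
\begin{equation*}
n \int_0^r H(s) \, ds \;\le\; r\, H(r).
\end{equation*}
By the monotonicity established in the previous paragraph,
\begin{equation*}
\int_0^r H(s) \, ds = \int_0^r \frac{H(s)}{s^{n-1}} \, s^{n-1} \, ds \;\le\; \frac{H(r)}{r^{n-1}} \int_0^r s^{n-1}\, ds = \frac{r \, H(r)}{n},
\end{equation*}
which is exactly what is needed.

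There is no real obstacle here beyond organising the two computations; the corollary is a direct book-keeping consequence of \eqref{H_prime} together with the non-negativity of the frequency. If anything, the mildly delicate point is ensuring that the polar-coordinate decomposition $\int_{B_r} u^2 = \int_0^r H(s)\,ds$ is valid, which is standard for $u$ harmonic (hence smooth) on $B_1$.
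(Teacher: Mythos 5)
Your proof is correct and follows essentially the same route as the paper, which derives the monotonicity directly from \eqref{H_prime} together with $N(r)\ge 0$ and then obtains \eqref{reverse_doubling2} "by integration" — your polar-coordinate computation is exactly the integration step the paper leaves implicit. No issues.
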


\begin{proof}
 It follows directly from \eqref{H_prime} that the function \eqref{reverse_doubling} is increasing.
By integration, we get \eqref{reverse_doubling2}.
\end{proof}

The following result, if we take $\eta=1/2$, is called the {\it doubling condition}. It is a counterpart of
Corollary \ref{cor_reverse_doubling}.
\begin{cor} \label{cor_doubling}
 Let $u$ be a harmonic function in $B_1$. For any $R,\eta \in(0, 1)$, there holds
\begin{align}
 \dashint_{\p B_{R}} u^2 \leq \eta^{-2N(R)}    \dashint_{\p B_{\eta R}} u^2,   \label{H_doubling_harm} \\
 \dashint_{ B_{R}} u^2 \leq \eta^{-2N(R)} \dashint_{ B_{\eta R}} u^2.          \label{doubling_harm}
\end{align}
\end{cor}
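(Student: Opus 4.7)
The plan is to derive both statements directly from the tools already assembled in Proposition \ref{prop_H_prime} and Proposition \ref{monotonicity_harm}; no new machinery is needed.

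For the spherical version \eqref{H_doubling_harm}, I would simply specialize \eqref{H_ineq} to $r_1=\eta R$ and $r_2=R$, which yields
\[
\frac{H(R)}{R^{n-1}} \leq \eta^{-2N(R)} \frac{H(\eta R)}{(\eta R)^{n-1}}.
\]
Dividing both sides by the surface measure $\omega_{n-1}$ of $S^{n-1}$ converts $H(r)/r^{n-1}$ into $\dashint_{\p B_r} u^2$ (up to the same factor on both sides), giving \eqref{H_doubling_harm} immediately.

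For the solid version \eqref{doubling_harm}, I would write $\int_{B_r} u^2 = \int_0^r H(s)\,ds$ and apply \eqref{H_ineq} scale by scale: for each $s \in (0,R]$, take $r_1 = \eta s$ and $r_2 = s$ to obtain
\[
H(s) \leq \eta^{-(n-1)-2N(s)} H(\eta s).
\]
By the monotonicity of $N$ (Proposition \ref{monotonicity_harm}) we have $N(s)\leq N(R)$ for $s\leq R$, and since $\eta\in(0,1)$ this gives the uniform bound $H(s)\leq \eta^{-(n-1)-2N(R)} H(\eta s)$. Integrating $s$ from $0$ to $R$ and changing variables $t=\eta s$ yields
\[
\int_0^R H(s)\,ds \leq \eta^{-(n-1)-2N(R)} \int_0^R H(\eta s)\,ds = \eta^{-n-2N(R)} \int_0^{\eta R} H(t)\,dt.
\]
Dividing by the volume $|B_R|= \omega_{n-1} R^n / n$ and rewriting the right side as $\eta^{-2N(R)}/|B_{\eta R}|$ times $\int_0^{\eta R} H$ then produces \eqref{doubling_harm}.

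There is no real obstacle here; this is a routine corollary of \eqref{H_ineq} plus monotonicity of $N$. The only small observation worth highlighting is that one must apply \eqref{H_ineq} at every intermediate scale $s\in(0,R]$ (rather than only at $R$) and then use $N(s)\le N(R)$ to pull the exponent outside the integral before a change of variables finishes the argument.
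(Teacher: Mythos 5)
Your proof is correct and follows essentially the same route as the paper: \eqref{H_doubling_harm} is obtained by taking $r_1=\eta R$, $r_2=R$ in \eqref{H_ineq}, and \eqref{doubling_harm} by applying the spherical inequality at every scale $s\le R$, invoking the monotonicity of $N$ to replace $N(s)$ by $N(R)$, and integrating with a change of variables. The paper states this more tersely ("integrating \eqref{H_doubling_harm} from $0$ to $R$"), but the argument is identical.
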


\begin{proof}
 Taking $r_1=\eta R, r_2=R$ in \eqref{H_ineq}, we obtain
\eqref{H_doubling_harm}. Integrating \eqref{H_doubling_harm} from $0$ to $R$, 
and using the monotonicity of $N$ (Proposition \ref{monotonicity_harm}),
we obtain \eqref{doubling_harm}.
\end{proof}

Next, we show that not only having a bound on the frequency implies a doubling condition
(Corollary \ref{cor_doubling}), but also knowing a doubling condition to be true implies
a bound on the frequency.

\begin{lemma} \label{lemma_freq_from_doubling}
 Let $u$ be a harmonic function in $B_r$, $r>0$. Let $0<\alpha<\theta<1$, and assume
\begin{equation} \label{assume}
\dashint_{B_{\alpha r}} u^2 \geq \kappa \dashint_{B_r} u^2
\end{equation}
for some $\kappa>0$.
Then
\begin{equation} \label{freq_bound}
N(\alpha r) \leq \frac{- \log \left( {\kappa} (1-\theta^n) \right)}{2 \log \left(\theta/ \alpha \right)}.
\end{equation}
In particular, for any $\beta<\alpha$, there holds
\begin{equation}\label{doubling_ab}
\dashint_{B_{\beta r}} u^2 %\geq \kappa \left(\frac{\beta}{\alpha} \right)^{2N(\alpha r) +n}  
\geq  
%\left( \frac{\beta}{\alpha} \right)^n 
\left( {\kappa (1-\theta^n)}  \right)^{\frac{\log (\alpha/\beta)}{ \log (\theta/\alpha)}} \dashint_{B_{\alpha r}} u^2.
\end{equation}
\end{lemma}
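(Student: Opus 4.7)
The strategy is to convert the hypothesized ball-doubling into an upper bound on the ratio of sphere averages $f(s) := \dashint_{\p B_s} u^2$ at the two radii $\alpha r < \theta r$, and then read off a bound on $N(\alpha r)$ from the growth law \eqref{H_eq}. Throughout I will use Corollary \ref{cor_reverse_doubling}, which says both that $f$ is monotone nondecreasing and that $\dashint_{B_s} u^2 \leq f(s)$ for every $s$.

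For a lower bound on $f(\theta r)/f(\alpha r)$: since $f(s) = H(s)/(n\omega_n s^{n-1})$, formula \eqref{H_eq} together with Proposition \ref{monotonicity_harm} (used to bound $N(s) \geq N(\alpha r)$ on $[\alpha r, \theta r]$) yields
\begin{equation*}
\frac{f(\theta r)}{f(\alpha r)} \;=\; \exp\!\left(2\int_{\alpha r}^{\theta r}\frac{N(s)}{s}\ud s\right) \;\geq\; \left(\frac{\theta}{\alpha}\right)^{2N(\alpha r)}.
\end{equation*}
For an upper bound on the same ratio, I exploit monotonicity of $f$ to estimate
\begin{equation*}
\int_{B_r} u^2 \;\geq\; \int_{B_r \setminus B_{\theta r}} u^2 \;\geq\; f(\theta r)\,|B_r|(1-\theta^n),
\end{equation*}
so $f(\theta r) \leq \dashint_{B_r} u^2/(1-\theta^n)$. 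Combining with $f(\alpha r) \geq \dashint_{B_{\alpha r}} u^2 \geq \kappa \dashint_{B_r} u^2$, which follows from \eqref{reverse_doubling2} and the hypothesis \eqref{assume}, one obtains $f(\theta r)/f(\alpha r) \leq 1/(\kappa(1-\theta^n))$.

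Chaining the two estimates yields $(\theta/\alpha)^{2N(\alpha r)} \leq 1/(\kappa(1-\theta^n))$, which on taking logarithms is exactly \eqref{freq_bound}. For the doubling inequality \eqref{doubling_ab}, I apply \eqref{doubling_harm} with $R = \alpha r$ and $\eta = \beta/\alpha \in (0,1)$ to get $\dashint_{B_{\beta r}} u^2 \geq (\beta/\alpha)^{2N(\alpha r)} \dashint_{B_{\alpha r}} u^2$, then substitute the frequency bound just proved, using $\beta/\alpha<1$ so that the upper bound on $N(\alpha r)$ translates into a lower bound on $(\beta/\alpha)^{2N(\alpha r)}$; a direct computation produces the stated exponent $\log(\alpha/\beta)/\log(\theta/\alpha)$.

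The only real trick is that the hypothesis is phrased in ball averages whereas the frequency is naturally expressed through sphere averages; introducing the auxiliary radius $\theta r$ strictly between $\alpha r$ and $r$ is what allows the annulus $B_r\setminus B_{\theta r}$ to act as a bridge via the monotonicity of $f$.
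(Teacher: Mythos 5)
Your proof is correct and follows essentially the same route as the paper's: pass from ball averages to sphere averages via Corollary \ref{cor_reverse_doubling}, use monotonicity of the sphere average over the annulus $B_r\setminus B_{\theta r}$ to bound $\dashint_{\p B_{\theta r}}u^2$ by $\dashint_{B_r}u^2/(1-\theta^n)$, extract the frequency bound from \eqref{H_eq} and Proposition \ref{monotonicity_harm}, and conclude \eqref{doubling_ab} from \eqref{doubling_harm}. The only difference is presentational (you organize the argument as two bounds on the ratio $f(\theta r)/f(\alpha r)$ rather than a single chain of inequalities), and your exponent computation checks out.
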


\begin{proof}
 Using Corollary \ref{cor_reverse_doubling} and  the assumption \eqref{assume}, we obtain
\begin{align*}
 \dashint_{\p B_{\alpha r}} u^2 &\geq \dashint_{B_{\alpha r}} u^2 \geq \kappa \dashint_{ B_{ r}} u^2%, \\
%\int_{B_r} u^2 
\geq \kappa \frac{1}{\omega_n r^n} \int_{\theta r}^r \int_{\p B_{\rho}} u^2 d\sigma d\rho   \\
&\geq\kappa \frac{1}{\omega_n r^n} \int_{\theta r}^r \left(\frac{\rho}{\theta r} \right)^{n-1} d\rho \int_{\p B_{\theta r}} u^2 
= \kappa (1-\theta^n) \dashint_{\p B_{\theta r}} u^2.
\end{align*}
Using \eqref{H_eq} and the monotonicity of $N$, we further obtain
$$
-\log (\kappa (1-\theta^n)) \geq \log \left(\frac{H(\theta r)/(\theta r)^{n-1}}{ H(\alpha r)/(\alpha r)^{n-1}}   \right) =
2\int_{\alpha r}^{\theta r} \frac{N(\rho)}{\rho} \geq 2N(\alpha r) \log (\theta/\alpha),
$$
which is \eqref{freq_bound}. The inequality \eqref{doubling_ab} follows from this bound on $N(\alpha r)$ and
the doubling condition \eqref{doubling_harm}:
\begin{align*}
\dashint_{B_{\beta r}} u^2 
\geq  
\left(\frac{\beta}{\alpha} \right)^{2N(\alpha r)} \dashint_{B_{\alpha r}} u^2
\geq
\left(\frac{\beta}{\alpha} \right)^{ \frac{- \log \left( {\kappa} (1-\theta^n) \right)}{ \log \left(\theta/ \alpha \right)}}
\dashint_{B_{\alpha r}} u^2
\\
=
\left( {\kappa (1-\theta^n)}  \right)^{\frac{\log (\alpha/\beta)}{ \log (\theta/\alpha)}} \dashint_{B_{\alpha r}} u^2.
\end{align*}
\end{proof}

Next, we recall a result estimating the frequency in a given point by the frequency
in a different point. This is an important property whose adaptation we will use to
obtain global estimates.

\begin{prop}[\cite{han_harmonic}, Theorem 1.6] \label{chain_frequency}
 Let $u$ be a harmonic function in $B_1$. For any $R\in (0,1)$, there  exists a constant
$N_0=N_0(R)\ll 1$ such that the following holds. If $N(0,1)\leq N_0$, then $u$ does not vanish in $B_R$.
If $N(0,1)\geq N_0$, then there holds
$$
N\left(p, \frac{1}{2}(1-R)  \right) \leq CN(0,1), \quad \text{for any } p\in B_R,
$$
where $C$ is a positive constant depending only on $n$ and $R$. In particular, 
the vanishing order of $u$ at any point in $B_R$ never exceeds $c(n, R)N(0,1)$.
\end{prop}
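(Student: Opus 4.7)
The plan is to handle the two alternatives separately, in each case using the doubling machinery already developed. For the non-vanishing alternative, when $N(0,1) \leq N_0$ is small, identity \eqref{H_eq} forces $H(r)/r^{n-1}$ to be nearly constant in $r$, so $u$ behaves like a harmonic function of very small growth and on $B_R$ is a small perturbation of a nonzero constant. More precisely, combining \eqref{H_prime} with standard interior estimates for harmonic functions (gradient bounds via the mean value property), the oscillation of $u$ on $B_R$ is a small fraction of its $L^2$ size on $\partial B_1$ once $N_0 = N_0(R)$ is chosen small enough, so $u$ cannot vanish in $B_R$.

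For the main assertion, fix $p \in B_R$ and set $\rho = (1-R)/2$. The strategy is to apply Lemma \ref{lemma_freq_from_doubling} at the center $p$, which requires a doubling inequality
\begin{equation*}
\dashint_{B(p, \alpha \rho)} u^2 \geq \kappa \, \dashint_{B(p, \rho)} u^2
\end{equation*}
with $\alpha \in (0,1)$ depending on $R$ and $-\log \kappa \leq C(R) N(0,1) + C(R)$; then \eqref{freq_bound} yields $N(p, \alpha\rho) \leq C(R) N(0,1)$, and Proposition \ref{monotonicity_harm} promotes this to the stated bound on $N(p, \rho)$.

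To produce this doubling at $p$, I would run a chaining argument: join $0$ to $p$ by a sequence $0 = x_0, x_1, \ldots, x_k = p$ with $|x_{i+1}-x_i| \leq \rho/4$ and $k \leq C(R)$. The inclusions $B(x_{i+1}, \rho/2) \subset B(x_i, \rho)$ and $B(x_i, \rho/4) \subset B(x_{i+1}, \rho/2)$, together with \eqref{doubling_harm} applied at $x_i$, let me compare the $L^2$ mass on concentric balls around consecutive centers at a cost of $\exp(C \cdot N(x_i, \rho))$ per step, with the intermediate frequencies controlled inductively by $C N(0,1)$; after $k \leq C(R)$ steps the cumulative loss is $\exp(C(R) N(0,1))$. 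Combined with the upper bound $\int_{B(p, \rho)} u^2 \leq \int_{B_{(1+R)/2}} u^2$ and one further application of \eqref{doubling_harm} at the origin, this produces the required doubling at $p$. The main obstacle is the inductive bookkeeping: each chain step uses a doubling at $x_i$ that itself requires $N(x_i, \rho)$ to be controlled, so the induction must close with a bounded loss per step depending only on $R$. Once the doubling at $p$ is in hand, the vanishing order statement is immediate, since the vanishing order of $u$ at $p$ equals $\lim_{r \to 0^+} N(p, r)$, which is at most $N(p, \rho)$ by monotonicity.
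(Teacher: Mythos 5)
The paper itself does not prove this proposition --- it is quoted from Theorem 1.6 of \cite{han_harmonic} --- and your overall strategy is precisely the standard proof there: produce a doubling inequality centered at $p$ by chaining from $0$ to $p$, then convert it into a frequency bound via Lemma \ref{lemma_freq_from_doubling}. Your first alternative is also correct in outline: smallness of $N(0,1)$ gives $\int_{B_1}|\nabla u|^2 = N(0,1)H(1)$ small relative to $H(1)$, while \eqref{H_eq} and Corollary \ref{cor_reverse_doubling} keep $\dashint_{B_{1/2}}u^2$ comparable to $\dashint_{\partial B_1}u^2$, so interior gradient estimates force $u$ to stay away from zero on $B_R$. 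However, two steps in your second alternative fail as written, both for the same reason. First, the closing step is backwards: Proposition \ref{monotonicity_harm} gives $N(p,\alpha\rho)\leq N(p,\rho)$, so a bound on $N(p,\alpha\rho)$ cannot be ``promoted'' to one on $N(p,\rho)$. To bound $N(p,(1-R)/2)$ itself you must arrange the doubling hypothesis of Lemma \ref{lemma_freq_from_doubling} with \emph{outer} radius strictly larger than $(1-R)/2$; this is geometrically available, since $B(p,1-R)\subset B_1$ for $p\in B_R$, so one may take $\alpha r=(1-R)/2$ with $r$ close to $1-R$.

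Second, the induction is circular as set up: each chain step invokes \eqref{doubling_harm} at $x_i$ at the outer radius $\rho$, which requires $N(x_i,\rho)$ as input, whereas what the previous step delivers through \eqref{freq_bound} is a bound on $N(x_i,\alpha\rho)$ at the \emph{inner} radius --- and again monotonicity runs the wrong way, so the loop does not close. The standard repair (which you half-gesture at with the comparison to $\int_{B_{(1+R)/2}}u^2$) is to carry as the inductive hypothesis a lower bound against a fixed reference ball, say $\int_{B(x_i,\rho/4)}u^2\geq \epsilon_i\int_{B(0,(1+R)/2)}u^2$. Then the trivial inclusion $B(x_{i+1},\rho)\subset B(0,(1+R)/2)$ supplies the outer-radius comparison for free, the hypothesis of Lemma \ref{lemma_freq_from_doubling} at $x_{i+1}$ is verified with $\kappa\geq c(R)\epsilon_i$, and its conclusion \eqref{doubling_ab} (not \eqref{doubling_harm}, which would again need an uncontrolled frequency) yields $\int_{B(x_{i+1},\rho/4)}u^2\geq c(R)\,\epsilon_i^{C(R)}\int_{B(0,(1+R)/2)}u^2$. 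After $k\leq C(R)$ steps this gives $-\log\epsilon_k\leq C(R)(N(0,1)+1)$, with $\epsilon_0$ coming from a single application of \eqref{doubling_harm} at the origin; the additive constant is absorbed using $N(0,1)\geq N_0$. With these two repairs your argument goes through, and the final vanishing-order statement is indeed immediate from $\lim_{r\to 0^+}N(p,r)\leq N(p,(1-R)/2)$.
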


\subsection{Frequency and doubling condition for solutions of general elliptic equations} \label{sec_frequency_gen}
\medskip

We will need a generalization of the frequency \eqref{frequency} for more general
elliptic equations. Let us recall the theory developed in \cite{lin_garofalo} and \cite{lin_garofalo_2}.

In the unit ball $B_1 \subset \R^n$, %with the center at origin, and 
consider the equation
\begin{equation} \label{equation}
Lw= \Div (A(x)\grad w) + b(x)\cdot \grad w +c(x) w =0,
\end{equation}
where $A(x) = (a^{ij}(x))_{i,j=1}^n$ is a real symmetric matrix-valued function on $B_1$, 
$b(x)$ is a vector valued and $c(x)$ is a scalar function on $B_1$.
We assume

(i) there exists $\alpha\in(0,1)$ such that, for every $x\in B_1$ and  $\xi\in\R^n$,
\begin{equation} \label{ellipticity}
 \alpha|\xi|^2 \leq \sum_{i,j=1}^n a^{ij}(x) \xi_i \xi_j;
\end{equation}

(ii) there exists $\gamma>0$ such that, for every $x,y\in B_1$,
\begin{equation} \label{Lipschitz}
 |a^{ij}(x) - a^{ij}(y)|\leq \gamma |x-y|, \hskip1cm i,j=1,\dots, n;
\end{equation}

(iii) there exists $K>0$ such that%, for every $x\in O$,
\begin{equation}	 \label{L-infinity}
 \sum_{i,j}|| a^{ij} ||_{L^{\infty}(B_1)} + \sum_j || b^j ||_{L^{\infty}(B_1)}+|| c ||_{L^{\infty}(B_1)} \leq K.
 \end{equation}
%for some finite $K$.

Since we will use the frequency only in points $x$ where $A(x)=I$,
i.~e.~where the principal part of the equation is Laplacian, and the
exposition becomes significantly simpler in this setting, we will
limit ourselves to defining the frequency only in these points.
The general definition (and properties, which hold in the same way)
can be found in \cite{lin_garofalo,lin_garofalo_2}.

\begin{notation}
Let $w\in W_{\loc}^{1,2}(B_1)$ be a solution of \eqref{equation} in $B_1$, $A(0)=I$, and $0<r<1$.
%Assume that $A(0)=I$. 
Denote
\begin{gather*}
H(r) = \int_{\p B_r}  w^2 \,d \sigma  \\
D(r)= \int_{B_r}  |\grad\, w|^2 \, dx, \\
I(r)= \int_{B_r}(|\grad \, w|^2 + ub\cdot \grad\, w +c w^2 ) \, dx, \\
N(r)=\frac{r I(r)}{H(r)},
\end{gather*}
the last quantity being defined only if $H(r)>0$.
Analogously we define these quantities not only for solutions $w$ on the unit ball $B_1=B(0,1)$,
but also for solutions on any ball $B(x_0,r_0)$ with center $x_0$ such that $A(x_0)=I$ and radius $r_0$.
Then $H(r)$, $D(r)$, $I(r)$ and $N(r)$ also depend on $w$ and $x_0$, and if the function
and the center are not clear from the context, we will denote them $H_w(x_0,r)$, $D_w(x_0,r)$, $I_w(x_0,r)$, $N_w(x_0,r)$
(we will skip either $x_0$ or $w$ if just one is not clear from the context).
\end{notation}

Note that unlike in the harmonic case, it is not clear anymore where $N(r)$ is defined and whether $N(r)>0$: we have $D(r)>0, H(r)\geq 0$,
but the sign of $I(r)$ is not obvious.
However, it can be shown that at least on some interval, $N(r)$ is defined and $N(r)/r\geq -C$ for some constant $C$.

\begin{lemma} \label{H_positive}
  Let $w\in W_{\loc}^{1,2}(B_1)$ be a nonzero solution to \eqref{equation} in $B_1$, $A(0)=I$. Then there exist constants $r_0,C>0$,
depending on $n,\alpha,\gamma, K$, such that for any $r\in (0,r_0)$,
\begin{align}
D(r) & \leq 2I(r) +C H(r), \label{I_D_inequality} \\
\nonumber D(r) & \geq  \frac{1}{2} I(r) -C H(r), \\
\nonumber H(r) &>0.
\end{align}
\end{lemma}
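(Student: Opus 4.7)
The first two inequalities reduce, via an elementary Cauchy--Schwarz argument, to an auxiliary estimate
\[
\int_{B_r} w^2\,dx \le C_2\, r\, H(r), \qquad r \in (0, r_0), \qquad (\star)
\]
which will be established in tandem with the positivity $H(r) > 0$. Indeed, from the definition, $I(r) - D(r) = \int_{B_r}(w b \cdot \nabla w + c w^2)\,dx$, and the AM--GM inequality $|w b \cdot \nabla w| \le \tfrac14 |\nabla w|^2 + |b|^2 w^2$ together with \eqref{L-infinity} gives
\[
|I(r) - D(r)| \le \tfrac14 D(r) + C_1 \int_{B_r} w^2\,dx.
\]
Once $(\star)$ is known, the right-most term becomes $C H(r)$ (for $r < r_0$ absorbing the factor of $r$), and rearranging $|I - D| \le \tfrac14 D + CH$ yields both $D \le 2I + CH$ and $D \ge \tfrac12 I - CH$.

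To prove $(\star)$ and $H(r) > 0$, the plan is to derive an almost-monotonicity property of the form
\[
\tfrac{d}{dr} \log\bigl(H(r)/r^{n-1}\bigr) \ge -C_3 \qquad \text{on } (0, r_0).
\]
Differentiating $H(r) = r^{n-1} \int_{S^{n-1}} w(r\omega)^2\,d\omega$ yields
\[
H'(r) = \tfrac{n-1}{r} H(r) + 2 \int_{\partial B_r} w\, w_r\, d\sigma.
\]
Using $A(0) = I$ and the Lipschitz bound \eqref{Lipschitz} (so $|A(x) - I| \le \gamma |x|$), I plan to compare $w_r = \nabla w \cdot \nu$ with the conormal derivative $(A \nabla w)\cdot \nu$ up to an error of order $\gamma r |\nabla w|$, and then integrate by parts against $Lw = 0$ to obtain
\[
\int_{\partial B_r} w\, w_r\,d\sigma = 2 D(r) - I(r) + \mathcal{E}(r),
\]
where $\mathcal{E}(r)$ is controlled by $r D(r)$, $\int_{B_r} w^2\,dx$, and $\gamma r \int_{\partial B_r} |w||\nabla w|\,d\sigma$. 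Combining the Cauchy--Schwarz bound on $I - D$ from the previous step with the identity $\int_{\partial B_r}|\nabla w|^2\,d\sigma = D'(r)$ to control the boundary error, and absorbing the $rD$ and $rD'$ pieces for $r$ small, yields the differential inequality on intervals where $H > 0$. Strong unique continuation for $L$, which holds under \eqref{ellipticity}--\eqref{L-infinity}, precludes infinite-order vanishing of a nonzero $w$ at $0$; hence $H$ is positive on a dense subset near $0$, and integration of the differential inequality propagates positivity to all of $(0, r_0)$. Finally, the almost-monotonicity gives $H(s)/s^{n-1} \le e^{C_3(r-s)} H(r)/r^{n-1}$ for $s \le r$, and therefore
\[
\int_{B_r} w^2\,dx = \int_0^r H(s)\,ds \le \frac{H(r)}{r^{n-1}} \int_0^r s^{n-1} e^{C_3(r-s)}\,ds \le C_2 r H(r),
\]
establishing $(\star)$.

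The main obstacle is the circular dependence between the three claims: the Cauchy--Schwarz reduction needed for claims (1) and (2) relies on $(\star)$, the estimate $(\star)$ relies on the almost-monotonicity, and the almost-monotonicity uses a lower bound on $\int_{\partial B_r} w\, w_r\,d\sigma$ obtained from the same integration by parts that underlies (1) and (2). The resolution is that every error term arising from $A - I$ and from the lower-order coefficients $b, c$ carries an explicit factor of $r$ or of $\int_{B_r} w^2\,dx$, stemming from \eqref{Lipschitz} and the normalization $A(0) = I$; by choosing $r_0$ small enough all errors can be absorbed, closing the system of inequalities simultaneously and yielding all three conclusions at once.
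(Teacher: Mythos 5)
Your opening move --- reducing the first two inequalities via Cauchy--Schwarz to a bound on $\int_{B_r}w^2$ --- is the right one, but the route you then take does not close, and the circularity you acknowledge at the end is not actually resolved. The estimate $(\star)$, $\int_{B_r}w^2\le C\,r\,H(r)$, is \emph{false} for general $W^{1,2}$ functions; it is a consequence of the doubling property and hence only available a posteriori. Your derivation of it rests on the almost-monotonicity of $H(r)/r^{n-1}$, which in turn requires a lower bound on $2D(r)-I(r)+\mathcal{E}(r)$ in terms of $H(r)$ --- i.e.\ precisely the control of $\int_{B_r}w^2$ being sought. Observing that ``all errors carry a factor of $r$'' does not close a circular system of implications: one needs either an a priori starting point for a continuity/bootstrap argument or a way to break the circle, and neither is supplied. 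There is a second, independent obstruction: the boundary error $\gamma r\int_{\partial B_r}|w||\nabla w|\,d\sigma$ is of size $\gamma r\bigl(H(r)D'(r)\bigr)^{1/2}$, and $rD'(r)$ is not controlled pointwise by $D(r)$ or $H(r)$, so it cannot be ``absorbed for $r$ small'' into a pointwise differential inequality for $\log\bigl(H(r)/r^{n-1}\bigr)$. (In Garofalo--Lin, $H'$ never produces boundary integrals of $|\nabla w|^2$; those arise only in $D'$ and are handled by the Rellich--Pohozaev identity --- a computation that is not needed for this lemma at all.)

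The missing idea --- and what the sources cited in the paper's proof actually use --- is the elementary Poincar\'e/trace inequality $\int_{B_r}w^2\,dx\le C\bigl(rH(r)+r^2D(r)\bigr)$, valid for \emph{every} $w\in W^{1,2}(B_r)$ with a dimensional constant (prove it for $r=1$ and scale). Inserting this into your bound $|I(r)-D(r)|\le\frac14 D(r)+C\int_{B_r}w^2$ and choosing $r_0$ so that $Cr_0^2\le\frac14$ absorbs the $r^2D(r)$ term into the gradient slack and yields both $D\le 2I+CH$ and $D\ge\frac12 I-CH$ directly, with no monotonicity and no circularity. For $H(r)>0$: if $H(\bar r)=0$ for some $\bar r<r_0$, test \eqref{equation} with $w$ on $B_{\bar r}$; the boundary term vanishes, so $\alpha D(\bar r)\le\int_{B_{\bar r}}\nabla w\cdot A\nabla w=\int_{B_{\bar r}}(wb\cdot\nabla w+cw^2)\le\frac{\alpha}{2}D(\bar r)+C\int_{B_{\bar r}}w^2\le\frac{\alpha}{2}D(\bar r)+C\bar r^2D(\bar r)$, forcing $D(\bar r)=0$ and hence $w\equiv0$ on $B_{\bar r}$; weak unique continuation under \eqref{ellipticity}--\eqref{L-infinity} then contradicts $w\not\equiv0$. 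Note also that invoking \emph{strong} unique continuation here is uncomfortable, since in the relevant literature that theorem is itself derived from the frequency machinery that this lemma initiates; weak unique continuation suffices and is what is needed.
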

 
\begin{proof}
 The lemma can be found in \cite{book} as Lemma 3.2.3 and Corollary 3.2.5. In \cite{lin_garofalo_2}, the last
inequality is Lemma 2.2. and the previous two easily follow from the estimates done in the proof of this lemma.
\end{proof}

\begin{cor} \label{N_almost_positive}
 Let $w\in W_{\loc}^{1,2}(B_1)$ be a nonzero solution to \eqref{equation} in $B_1$, $A(0)=I$. Then there exist constants $r_0,C>0$,
depending on $n,\alpha,\gamma, K$, such that for any $r\in (0,r_0)$, $N(r)$ is defined and
\begin{equation*}
\frac{N(r)}{r} \geq -C. 
\end{equation*}
\end{cor}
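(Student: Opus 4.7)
The plan is to observe that Corollary \ref{N_almost_positive} is a direct algebraic consequence of Lemma \ref{H_positive}, so there is essentially no obstacle beyond unpacking the definitions.

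First I would invoke the last assertion of Lemma \ref{H_positive}, namely $H(r) > 0$ for $r \in (0, r_0)$. Since $w$ is not identically zero and $H$ is the $L^2$-mass on the sphere $\partial B_r$, this ensures that $N(r) = r I(r)/H(r)$ is well-defined on the interval $(0, r_0)$. Here $r_0$ and the constants are those provided by the lemma, depending only on $n, \alpha, \gamma, K$.

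Next I would use the first inequality from Lemma \ref{H_positive},
\begin{equation*}
D(r) \leq 2 I(r) + C H(r),
\end{equation*}
and rearrange it to obtain the lower bound
\begin{equation*}
I(r) \geq \tfrac{1}{2} D(r) - \tfrac{C}{2} H(r).
\end{equation*}
Since $D(r) = \int_{B_r} |\nabla w|^2 \,dx \geq 0$, this immediately yields $I(r) \geq -\tfrac{C}{2} H(r)$.

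Dividing by $H(r) > 0$ gives $I(r)/H(r) \geq -C/2$, and therefore
\begin{equation*}
\frac{N(r)}{r} = \frac{I(r)}{H(r)} \geq -\frac{C}{2},
\end{equation*}
which is the desired inequality (after renaming the constant). The main ``step'' is recognizing that the two inequalities of Lemma \ref{H_positive} correspond exactly to an upper and lower bound on $I$ in terms of $D$ and $H$, and that the positivity of $D$ suffices to absorb the $H$-term in the right direction. No further analysis is needed.
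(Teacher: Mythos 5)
Your proposal is correct and follows exactly the paper's own argument: positivity of $H(r)$ from Lemma \ref{H_positive} makes $N(r)$ well-defined, and rearranging the inequality $D(r) \leq 2I(r) + CH(r)$ together with $D(r) \geq 0$ gives the lower bound on $N(r)/r$. No differences worth noting.
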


\begin{proof}
Take $r_0$ from Lemma \ref{H_positive}. Then for $r\in (0,r_0)$, $H(r)$ is positive, so $N(r)$ is defined and
by \eqref{I_D_inequality}, we obtain
\begin{equation*}
 \frac{N(r)}{r} = \frac{I(r)}{H(r)} \geq \frac{D(r)}{2H(r)} - C \geq -C.
\end{equation*}
\end{proof}

%\noindent
Corresponding to the monotonicity of $N$ in the harmonic case, there holds the following bound in the general elliptic setting.
\begin{thm} \label{N_L_infty}
 Let $w\in W_{\loc}^{1,2}(B_1)$ be a nonzero solution to \eqref{equation} in $B_1$, $A(0)=I$. Then there exist constants $r_0,c_1,c_2>0$,
depending on $n,\alpha,\gamma, K$, such that 
\begin{equation} \label{N_L_infty_bound}
% ||
N(R_1) %||_{L^{\infty}(0,r_0)} 
\leq c_1+ c_2 N(R_2) \hskip1cm \text{ for any } 0<R_1 < R_2 \leq r_0. 
\end{equation}
\end{thm}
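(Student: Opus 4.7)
The plan is to prove an almost-monotonicity of $N$ in the form
\begin{equation*}
N(R_1) + C_1 \leq e^{C_2(R_2 - R_1)} \bigl(N(R_2) + C_1\bigr), \qquad 0 < R_1 < R_2 \leq r_0,
\end{equation*}
for some constants $C_1, C_2 > 0$ depending only on $n, \alpha, \gamma, K$. Since $R_2 - R_1 \leq r_0$, this immediately gives \eqref{N_L_infty_bound} with $c_2 = e^{C_2 r_0}$ and $c_1 = (e^{C_2 r_0} - 1) C_1$. The bound above is equivalent to the differential inequality $\frac{d}{dr}\log(N(r) + C_1) \geq -C_2$ on $(0, r_0)$, where $C_1$ is fixed large enough, using Corollary \ref{N_almost_positive}, that $N(r) + C_1 > 1$ throughout $(0, r_0)$ after possibly shrinking $r_0$.

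To establish this differential inequality, I would follow the scheme of \cite{lin_garofalo_2}. First, differentiate $H(r)$ by the co-area formula, integrate by parts using $\Div(A\nabla w) = -b\cdot\nabla w - cw$, and invoke $A(0) = I$ together with the Lipschitz bound \eqref{Lipschitz} (so that $|A - I| = O(|x|)$ on $B_r$) to express
\begin{equation*}
H'(r) = \frac{n-1}{r} H(r) + 2I(r) + E_1(r).
\end{equation*}
Second, differentiate $I(r)$ and apply a Rellich--Pohozaev identity -- multiply $Lw = 0$ by $x\cdot\nabla w$ and integrate over $B_r$ -- followed by a Cauchy--Schwarz estimate on $\int_{\partial B_r} w(A\nabla w\cdot\nu)$ (which equals $I(r)$ modulo an $E_1$-type error), yielding
\begin{equation*}
I'(r) \geq \frac{n-2}{r} I(r) + 2\frac{I(r)^2}{H(r)} - E_2(r).
\end{equation*}
Substituting into $\frac{d}{dr}\log N(r) = \frac{1}{r} + \frac{I'(r)}{I(r)} - \frac{H'(r)}{H(r)}$, the main contributions $\frac{n-2}{r} + \frac{2N(r)}{r}$ from $I'/I$ and $\frac{n-1}{r} + \frac{2N(r)}{r}$ from $H'/H$ telescope against the $\frac{1}{r}$, leaving only the error contributions; combined with the comparability $D(r) \leq 2I(r) + CH(r)$ from Lemma \ref{H_positive} and the uniform lower bound on $N + C_1$, these errors yield the desired bound by $-C_2$.

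The main obstacle will be the honest bookkeeping of the two error families $E_1, E_2$. Both of them arise from three sources: the discrepancy $A(x) - I = O(|x|)$, the lower-order coefficients $b$ and $c$ (controlled by \eqref{L-infinity}), and, in the case of $E_2$, the replacement of $\int_{\partial B_r}(A\nabla w\cdot\nu)^2/(A\nu\cdot\nu)$ by $I(r)^2/H(r)$ via Cauchy--Schwarz. Showing that each of these pieces is controlled, uniformly in $r \in (0, r_0)$, by a quantity comparable to $I(r) + H(r)$ (without hidden polynomial dependence on $N(r)$ itself) is the delicate part of the argument and is precisely the technical content carried out in \cite{lin_garofalo_2}. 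Lemma \ref{H_positive} and Corollary \ref{N_almost_positive} are the key tools that make the absorption possible; once $E_1, E_2$ are pinned down, the claim follows by one-line integration of the differential inequality.
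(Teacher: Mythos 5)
Your proposal is correct and follows essentially the same route as the paper, which simply invokes Theorem 2.1 of Garofalo--Lin \cite{lin_garofalo_2} (equivalently Theorem 3.2.1 of \cite{book}); the almost-monotonicity $\frac{d}{dr}\log(N(r)+C_1)\geq -C_2$ that you outline, with the Rellich--Pohozaev identity and the absorption of the $O(\gamma r)$ and lower-order errors via Lemma \ref{H_positive} and Corollary \ref{N_almost_positive}, is exactly the content of that cited argument. A small bonus of your formulation: integrating the differential inequality over $(R_1,R_2)$ yields the estimate for every $R_2\leq r_0$ directly, dispensing with the separate scaling remark the paper makes to pass from $R_2=r_0$ to general $R_2$.
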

\begin{proof}
This theorem follows
from Theorem $2.1$ in \cite{lin_garofalo_2}, using our bounds \eqref{L-infinity}
instead of their more general assumptions on $b$ and $c$, and checking the $L^{\infty}$
bound on $N$ in the proof. It can also be found in a more similar form in \cite{book}
as Theorem $3.2.1$. The only difference in our formulation is that we state the estimate for
any $R_2 \leq r_0$, not just $R_2=r_0$. However, going through the proofs of the theorems
in \cite{lin_garofalo_2} or \cite{book}, one can check that the estimate is true
with our formulation without any changes to the proofs. Another way how to verify
inequality \eqref{N_L_infty_bound} for $R_2<r_0$ if we know that it holds for 
$R_2=r_0$ is to consider a solution $w_{r_0/R_2}=w(\frac{R_2 x}{r_0} )$ of a scaled equation
(which satisfies the assumptions \eqref{ellipticity}, \eqref{Lipschitz} and \eqref{L-infinity}
with the same constants as the original equation),
use \eqref{N_L_infty_bound} for $w_{r_0/R_2}$ and $\widetilde{R_2}=r_0$, and then use the scaling of
$N$ explained in the next section to deduce \eqref{N_L_infty_bound} for $w$ and the original $R_2<r_0$.
\end{proof}

Next, we will recall results for solutions of general elliptic equations corresponding
to Proposition \ref{prop_H_prime}, Corollary \ref{cor_reverse_doubling} and Corollary \ref{cor_doubling}.

\begin{prop} \label{prop_H_prime_gen}
 Let $w\in W_{\loc}^{1,2}(B_1)$ be a nonzero solution to \eqref{equation} in $B_1$, $A(0)=I$. Then  
\begin{equation}\label{H_prime_gen}
 \frac{d}{dr} \left( \log \frac{H(r)}{r^{n-1}} \right) = O(1)+ 2\frac{N(r)}{r},
\end{equation}
where $O(1)$ denotes a function bounded by a constant depending on $n, \alpha, \gamma$ and
the $L^{\infty}$-bound on the leading order coefficients $A$.
Integrating \eqref{H_prime_gen}, we obtain that for any $0<R_1<R_2<1$, there holds
\begin{equation} \label{H_eq_gen}
\frac{H(R_2) }{R_2^{n-1}} = \frac{H(R_1) }{R_1^{n-1}} \exp   \left( O(1)(R_2-R_1) + 2 \int_{R_1}^{R_2}  \frac{N(r)}{r}  \right).
\end{equation}
%and
Using Theorem \ref{N_L_infty} (the bound for $N$), it follows that if $R_2 \leq r_0$ (where $r_0$ comes from Theorem \ref{N_L_infty}
and depends on $n,\alpha, \gamma, K$), then
\begin{equation} \label{H_ineq_gen}
\frac{H(R_2) }{R_2^{n-1}} \leq  C^{(R_2-R_1)} \left(\frac{R_2}{R_1}\right)^{c_1+ c_2 N(R_2)} \frac{H(R_1) }{R_1^{n-1}}, 
\end{equation}
where $C,c_1,c_2$ depend on  $n,\alpha, \gamma, K$.
\end{prop}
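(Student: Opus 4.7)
The plan is to differentiate $H(r)$ in $r$ and convert the resulting boundary term into an interior quantity using the divergence theorem and the equation $Lw=0$, at the cost of error terms controlled by the Lipschitz bound \eqref{Lipschitz} on $A$ together with the normalization $A(0)=I$. As in the harmonic case, one begins with the scaling identity
\[
\frac{d}{dr}\log \frac{H(r)}{r^{n-1}} = \frac{2}{H(r)}\int_{\partial B_r}w\,\partial_\nu w\,d\sigma,
\]
obtained by writing $H(r) = r^{n-1}\int_{\partial B_1}w(r\omega)^2\,d\sigma(\omega)$ and differentiating. The task is to show that the right-hand side agrees with $2N(r)/r = 2I(r)/H(r)$ up to a uniformly bounded error.

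To accomplish this, I would split $\partial_\nu w = \langle A\nabla w,\nu\rangle - \langle (A-I)\nabla w,\nu\rangle$ on $\partial B_r$ and apply Green's identity:
\[
\int_{\partial B_r} w\langle A\nabla w,\nu\rangle\, d\sigma = \int_{B_r}\langle A\nabla w,\nabla w\rangle\, dx + \int_{B_r} w\,\Div(A\nabla w)\, dx.
\]
Using the PDE to substitute $\Div(A\nabla w) = -b\cdot\nabla w - cw$ converts the last integral into the lower order part of $I(r)$, while $\langle A\nabla w,\nabla w\rangle = |\nabla w|^2 + \langle (A-I)\nabla w,\nabla w\rangle$ recovers $D(r)$ up to an error of order $\gamma r\,D(r)$, since $|A(x)-I|\leq\gamma r$ on $B_r$. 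Collecting the terms, one arrives at an identity of the shape $\int_{\partial B_r} w\,\partial_\nu w\,d\sigma = I(r) + E(r)$, where $E(r)$ aggregates an interior error of size $\gamma r\,D(r)$, a term of the form $I(r)-D(r)$ arising from sign bookkeeping on the lower-order coefficients, and a boundary remainder bounded by $\gamma r\int_{\partial B_r}|w||\nabla w|\,d\sigma$. Lemma \ref{H_positive} is the key tool for finishing: it gives $|I(r)-D(r)|\leq C H(r)$, so the bulk contributions, once divided by $H(r)$, reduce to $O(1)$.

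The most delicate point I expect is bounding the boundary remainder $\int_{\partial B_r}|w||\nabla w|\,d\sigma$ by $C H(r)$, since there is no a priori pointwise control of $\nabla w$ on spheres and the quantity involves a gradient trace. I would handle it by a Caccioppoli-type argument on the annulus $B_{r}\setminus B_{r/2}$: via the coarea formula, choose a radius $\rho\in(r/2,r)$ for which $\int_{\partial B_\rho}|\nabla w|^2\,d\sigma$ is dominated by the mean $r^{-1}\int_{B_r}|\nabla w|^2\,dx = r^{-1}D(r)$, then use Cauchy--Schwarz together with Lemma \ref{H_positive} and an elementary reverse-doubling bound $\int_{B_r}w^2 \leq CrH(r)$ to close the estimate at the chosen sphere and pass the bound back to $\partial B_r$ by absolute continuity. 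This produces \eqref{H_prime_gen} with an $O(1)$ depending only on $n$, $\alpha$, $\gamma$, and $\|A\|_{L^{\infty}}$ (implicitly also on $K$, through the lower-order contributions).

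The remaining assertions are routine. Integrating \eqref{H_prime_gen} from $R_1$ to $R_2$ directly yields \eqref{H_eq_gen}. For \eqref{H_ineq_gen}, one invokes Theorem \ref{N_L_infty} on $(0,r_0]$ to dominate $N(r)\leq c_1+c_2 N(R_2)$ for every $r\in[R_1,R_2]$, so that
\[
2\int_{R_1}^{R_2}\frac{N(r)}{r}\,dr \;\leq\; 2\bigl(c_1+c_2 N(R_2)\bigr)\log(R_2/R_1),
\]
which, after exponentiating and relabeling constants, contributes the factor $(R_2/R_1)^{c_1+c_2 N(R_2)}$, while the $O(1)(R_2-R_1)$ term from the error in \eqref{H_eq_gen} becomes the prefactor $C^{R_2-R_1}$. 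Combining the two factors gives \eqref{H_ineq_gen}.
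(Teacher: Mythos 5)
Your opening identity $\frac{d}{dr}\log\bigl(H(r)/r^{n-1}\bigr)=\frac{2}{H(r)}\int_{\partial B_r}w\,\partial_\nu w\,d\sigma$ is correct, and converting the flux into $I(r)$ via the divergence theorem and the equation is the right general shape; but the error analysis does not close, and the gap is structural rather than technical. None of the three remainders you collect is $O(H(r))$. The interior term $\int_{B_r}\langle(A-I)\nabla w,\nabla w\rangle$ is of size $\gamma rD(r)$, and $rD(r)/H(r)$ is comparable to $N(r)$, which is unbounded over solutions (a homogeneous harmonic polynomial of degree $N$ already gives $rD/H=N$); so after dividing by $H(r)$ this term is $O(\gamma N(r))$, not $O(1)$. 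The lower-order mismatch is worse than you state: Lemma \ref{H_positive} does \emph{not} give $|I(r)-D(r)|\le CH(r)$ — it gives $\tfrac12 I-CH\le D\le 2I+CH$, so $|I-D|$ is only controlled by $I(r)+CH(r)$, and a direct Cauchy--Schwarz bound $|I-D|\le\epsilon D+C_\epsilon\int_{B_r}w^2$ still carries a $D(r)$; either way you again pick up $O(N(r))$, not $O(1)$. Finally, the boundary remainder $\int_{\partial B_r}w\langle(A-I)\nabla w,\nu\rangle\,d\sigma$ must be estimated at the \emph{given} radius, since \eqref{H_prime_gen} is a pointwise (a.e.) identity in $r$; choosing a good radius $\rho\in(r/2,r)$ by coarea controls $\int_{\partial B_\rho}|\nabla w|^2$ but says nothing about $\partial B_r$, and "absolute continuity" cannot transport the bound. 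Even with optimal control of the gradient trace this term is of order $\gamma r\cdot(N(r)/r)\,H(r)=\gamma N(r)H(r)$. In short, every error in your decomposition contributes $O(N(r))$ rather than $O(1)$, so what your argument would actually deliver is $\frac{d}{dr}\log(H/r^{n-1})=O(1)+(2+O(r))N(r)/r$ — enough for \eqref{H_ineq_gen} after Theorem \ref{N_L_infty}, but not the identity \eqref{H_prime_gen} with coefficient exactly $2$.

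The paper does not reprove the identity: it cites equation (2.16) of \cite{lin_garofalo_2}, and the device used there is precisely what your computation is missing. One first changes variables so that $x$ becomes an eigenvector of the transformed leading matrix, $A(x)x=\mu(x)x$ with $\mu(x)=\langle A(x)x,x\rangle/|x|^2=1+O(\gamma|x|)$, and defines $H(r)=\int_{\partial B_r}\mu\,w^2$. Then $\mu\,\partial_\nu w=\langle A\nabla w,\nu\rangle$ on $\partial B_r$, so the flux term equals $\int_{\partial B_r}w\langle A\nabla w,\nu\rangle$, which by the divergence theorem and the equation is \emph{exactly} $I(r)$ (with $I$ built from $\langle A\nabla w,\nabla w\rangle$): there is no boundary remainder and no $\langle(A-I)\nabla w,\nabla w\rangle$ term to absorb. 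The only extra contribution is $\int_{\partial B_r}(\partial_\nu\mu)\,w^2=O(\gamma)H(r)$, which is genuinely $O(1)$ after division by $H(r)$ and explains why the constant depends only on $n,\alpha,\gamma$ and $\|A\|_{L^\infty}$, not on $b,c$. Your derivations of \eqref{H_eq_gen} by integration and of \eqref{H_ineq_gen} from Theorem \ref{N_L_infty} are correct and match the paper.
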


\begin{proof}
Equation \eqref{H_prime_gen} can be found in \cite{lin_garofalo_2} as equation $(2.16)$. The fact that
$O(1)$ does not depend on the coefficients $b$ and $c$ can be verified by going back to the proof of this
equation. The $O(1)$ is the same as in \cite{lin_garofalo} for equations with no lower order terms.
\end{proof}

\begin{cor} \label{cor_reverse_doubling_gen}
 Let $w\in W_{\loc}^{1,2}(B_1)$ be a nonzero solution to \eqref{equation} in $B_1$, $A(0)=I$. 
Then there exist constants $r_0,C>0$ depending only on $n, \alpha, \gamma, K$ 
such that 
\begin{equation} \label{reverse_doubling_gen}
\dashint_{\partial B_s} w^2 \leq C \dashint_{\partial B_r} w^2 \ \ \  \text{ for any } 0<s<r<r_0
\end{equation}
and
\begin{equation} \label{reverse_doubling2_gen}
 \dashint_{B_r} w^2 \leq  %e^{Cr} 
C \dashint_{\p B_r} w^2 \ \ \  \text{ for } r\in (0,r_0).
\end{equation}
\end{cor}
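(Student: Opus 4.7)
The plan is to mimic the proof of the harmonic case (Corollary \ref{cor_reverse_doubling}), but with two key substitutions: in place of the exact identity $\frac{d}{dr}\log(H(r)/r^{n-1}) = 2N(r)/r$, I will use the perturbed version \eqref{H_prime_gen} from Proposition \ref{prop_H_prime_gen}, and in place of the monotonicity $N(r) \geq 0$, I will use the lower bound $N(r)/r \geq -C$ from Corollary \ref{N_almost_positive}. These combine to give
\[
\frac{d}{dr}\left(\log \frac{H(r)}{r^{n-1}}\right) \geq -C' \quad \text{for } r \in (0, r_0),
\]
where $C' > 0$ and $r_0 > 0$ depend only on $n, \alpha, \gamma, K$.

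Given this one-sided bound, integrating from $s$ to $r$ for $0 < s < r < r_0$ yields
\[
\log\frac{H(r)}{r^{n-1}} - \log\frac{H(s)}{s^{n-1}} \geq -C'(r-s) \geq -C' r_0,
\]
so that $H(s)/s^{n-1} \leq e^{C' r_0}\, H(r)/r^{n-1}$. Since $\dashint_{\partial B_\rho} w^2$ differs from $H(\rho)/\rho^{n-1}$ only by the dimensional constant $|\partial B_1|$, this immediately gives the first inequality \eqref{reverse_doubling_gen}, with a constant $C$ depending only on $n, \alpha, \gamma, K$ (through $C'$ and $r_0$).

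For the second inequality \eqref{reverse_doubling2_gen}, I will write the ball integral in polar coordinates,
\[
\dashint_{B_r} w^2 = \frac{1}{|B_r|}\int_0^r H(s)\,ds,
\]
and apply the bound just obtained in the form $H(s) \leq C\, s^{n-1} H(r)/r^{n-1}$ (valid for $0 < s < r < r_0$). Then
\[
\int_0^r H(s)\,ds \leq C\, \frac{H(r)}{r^{n-1}} \int_0^r s^{n-1}\, ds = \frac{C}{n}\, H(r)\, r,
\]
which after dividing by $|B_r|$ and comparing with $\dashint_{\partial B_r} w^2 \sim H(r)/r^{n-1}$ gives the required bound with a possibly larger constant depending only on $n, \alpha, \gamma, K$.

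I do not foresee a real obstacle here beyond bookkeeping the constants: the only subtle point is to note that \eqref{H_prime_gen} requires $H(r) > 0$ (so that $N(r)$ is defined), but this is precisely guaranteed on $(0, r_0)$ by Lemma \ref{H_positive}. Once we are on that interval, the lower bound on the derivative of $\log(H/r^{n-1})$ replaces harmonic monotonicity by an $O(1)$ correction, and all subsequent estimates proceed exactly as in the harmonic case up to multiplicative constants.
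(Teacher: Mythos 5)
Your proof is correct and follows essentially the same route as the paper: the paper likewise combines Corollary \ref{N_almost_positive} with \eqref{H_prime_gen} to conclude that $e^{Cr}H(r)/r^{n-1}$ is increasing on $(0,r_0)$ (equivalent to your integrated form), and then obtains \eqref{reverse_doubling2_gen} by integration in polar coordinates. Your remark about $H(r)>0$ on $(0,r_0)$ via Lemma \ref{H_positive} is the right justification for $N(r)$ being defined there.
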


\begin{proof}
Using Corollary \ref{N_almost_positive}, i.e.~$N(r)/r\geq -C$,
and \eqref{H_prime_gen}, we obtain 
\begin{equation*}
 \frac{d}{dr} \left( \log \frac{H(r)}{r^{n-1}} \right) = O(1)+ 2\frac{N(r)}{r} \geq -C
\end{equation*}
for some constants $r_0,C$ and $r<r_0$.
Hence the function $e^{Cr}H(r)/r^{n-1} $ is increasing
and \eqref{reverse_doubling_gen} follows.
%. Next, the function $\mu$
%in the definition of $H(r)=\int_{\partial B_r}\mu w^2 dV_{\partial B_r}$ is %Lipschitz and 
%bounded both from below and above by positive constants (\eqref{mu_estimates}),
%and by \eqref{b_elliptic} we can adjust $r_0$ so that
%the measure $dV_{\partial B_r}$, $r<r_0$ is comparable with $\mathcal{H}^{n-1}$.
%Then we easily obtain the estimate \eqref{reverse_doubling_gen}
%for the average integrals. %METRIC!!!!!!!!!!!
By integration, we get \eqref{reverse_doubling2_gen}.
\end{proof}

%Next, we state the doubling condition.

\begin{thm} \label{thm_doubling_gen}
Let $w\in W_{\loc}^{1,2}(B_1)$ be a nonzero solution to \eqref{equation} in $B_1$, $A(0)=I$. Then there exist constants $r_0,C,c_1,c_2>0$,
depending on $n,\alpha,\gamma, K$, such that for any $0<R_1<R_2<r_0$,
\begin{align}
 \dashint_{\p B_{R_2}} w^2 \leq C \left( \frac{R_2}{R_1} \right)^{c_1+c_2N(R_2)}    \dashint_{\p B_{ R_1}} w^2,   \label{H_doubling_gen} \\
 \dashint_{ B_{R_2}} w^2 \leq C \left( \frac{R_2}{R_1} \right)^{c_1+c_2N(R_2)} \dashint_{ B_{ R_1}} w^2.          \label{doubling_gen}
\end{align}
\end{thm}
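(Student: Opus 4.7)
The plan is to derive both estimates from \eqref{H_ineq_gen} of Proposition \ref{prop_H_prime_gen}, mirroring the way Corollary \ref{cor_doubling} was derived from Proposition \ref{prop_H_prime} in the harmonic case. The only substantive difference is that the strict monotonicity of $N$ used there must be replaced by the almost-monotonicity bound of Theorem \ref{N_L_infty}, which costs us a pair of constants in the exponent but preserves the overall form of the estimate.

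For the surface inequality \eqref{H_doubling_gen}, observe that $\dashint_{\partial B_r} w^2 = H(r)/(\omega_{n-1} r^{n-1})$, so \eqref{H_ineq_gen} reads
\[
\dashint_{\partial B_{R_2}} w^2 \leq C^{R_2-R_1} \left(\frac{R_2}{R_1}\right)^{c_1 + c_2 N(R_2)} \dashint_{\partial B_{R_1}} w^2.
\]
Since $R_2 - R_1 \leq r_0$, the factor $C^{R_2-R_1}$ is bounded by a constant depending only on $n, \alpha, \gamma, K$, and is absorbed into $C$.

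For the volume inequality \eqref{doubling_gen}, I would set $\eta := R_1/R_2$ and apply the just-proved surface inequality to the pair of radii $(r, \eta r)$ for each $r \in (0, R_2)$, giving
\[
\dashint_{\partial B_r} w^2 \leq C\, \eta^{-c_1 - c_2 N(r)} \dashint_{\partial B_{\eta r}} w^2.
\]
Since $r \leq R_2 < r_0$, Theorem \ref{N_L_infty} allows me to replace $N(r)$ by $c_1 + c_2 N(R_2)$ in the exponent, at the cost of enlarging the constants. Multiplying through by $\omega_{n-1} r^{n-1}$ yields a pointwise bound $H(r) \leq C (R_2/R_1)^{c_1 + c_2 N(R_2)} \eta^{-(n-1)} H(\eta r)$. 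Integrating over $r \in (0, R_2)$ and substituting $s = \eta r$ on the right gives
\[
\int_{B_{R_2}} w^2 \leq C \left(\frac{R_2}{R_1}\right)^{c_1 + c_2 N(R_2)} \eta^{-n} \int_{B_{R_1}} w^2;
\]
dividing by $\omega_n R_2^n$ and using the cancellation $\eta^{-n}(R_1/R_2)^n = 1$ produces exactly \eqref{doubling_gen}. The proof is essentially bookkeeping: no substantive obstacle arises beyond tracking how $c_1, c_2$ inflate across the successive uses of Theorem \ref{N_L_infty} and Proposition \ref{prop_H_prime_gen}.
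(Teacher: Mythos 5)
Your proof is correct and follows exactly the route the paper indicates: the paper's own ``proof'' merely cites \cite{lin_garofalo_2} and \cite{book} and remarks that the theorem ``also follows from \eqref{H_ineq_gen}'', which is precisely the derivation you carry out (mirroring how Corollary \ref{cor_doubling} is obtained by integrating the surface inequality and using the almost-monotonicity of $N$ from Theorem \ref{N_L_infty} in place of exact monotonicity). No gaps.
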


\begin{proof}
 This can be found in a slightly different form in \cite{lin_garofalo_2} as Theorem 1.2, or in
\cite{book} as Theorem 3.2.7. It also follows from \eqref{H_ineq_gen}. %using that 
%$H(r)$ is comparable with $\int_{\partial B_r} w^2$ for $r$ small enough.
\end{proof}

Finally, let us show the equivalent of Lemma \ref{lemma_freq_from_doubling}:
knowing a doubling condition to be true implies a bound on the frequency.

\begin{lemma} \label{lemma_freq_from_doubling_gen}
Let $w\in W_{\loc}^{1,2}(B_1)$ be a nonzero solution to \eqref{equation} in $B_1$, $A(0)=I$. %Let $0<a<\gamma<1$ and 
Assume
\begin{equation} \label{assume_gen}
\dashint_{B_{\zeta r}} w^2 \geq \kappa \dashint_{B_r} w^2
\end{equation}
for some $\kappa, \zeta \in (0,1)$ and $r\in (0,r_0]$, where $r_0$ depends on $n, \alpha, \gamma, K$ 
and is chosen so that the previous properties in this subsection hold.
Then there exists a constant $C_{\zeta}>0$ depending on $n$, $\alpha$, $\gamma$, $K$ and $\zeta$
such that
\begin{equation} \label{freq_bound_gen}
N(\zeta r) \leq C_{\zeta} (1- \log \kappa).
%\left(1+ \frac{- \log \left( {\kappa} (1-\gamma^n) \right)}{2 \log \left(\gamma/ a \right)} \right).
\end{equation}
In particular, for any $\beta< \zeta $, there exist constants $C_1$, $C_2$  depending on $n$, $\alpha$, $\gamma$, $K$ and $\zeta$, $\beta$ such that
%holds
\begin{equation}\label{doubling_ab_gen}
\dashint_{B_{\beta r}} w^2 %\geq \kappa \left(\frac{\beta}{\alpha} \right)^{2N(\alpha r) +n}  
\geq  
%\left( \frac{\beta}{\alpha} \right)^n 
%\left( {\kappa (1-\gamma^n)}  \right)^{\frac{\log (a/\beta)}{\log (\gamma/a)}} 
\frac{1}{C_1} \kappa^{C_2}
\dashint_{B_{\zeta r}} w^2.
\end{equation}
\end{lemma}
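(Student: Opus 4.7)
The plan is to mirror the proof of Lemma~\ref{lemma_freq_from_doubling}, substituting the results of the general elliptic subsection for their harmonic counterparts. Specifically, Corollary~\ref{cor_reverse_doubling} is replaced by Corollary~\ref{cor_reverse_doubling_gen}, the monotonicity of $N$ is replaced by Theorem~\ref{N_L_infty}, the identity \eqref{H_eq} is replaced by \eqref{H_eq_gen}, and the doubling estimate \eqref{doubling_harm} is replaced by Theorem~\ref{thm_doubling_gen}. Throughout, $r_0$ is shrunk (if necessary) so that all these statements are available, depending only on $n,\alpha,\gamma,K$.

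First I would fix some $\theta\in(\zeta,1)$ (e.g.\ $\theta=(1+\zeta)/2$) and convert the volume-integral hypothesis \eqref{assume_gen} into a boundary-integral inequality. Using \eqref{reverse_doubling2_gen} on the left and the coarea formula combined with \eqref{reverse_doubling_gen} (applied with $s=\theta r$ against $\rho\in[\theta r,r]$) on the right, the same book-keeping as in the harmonic case yields
\begin{equation*}
\dashint_{\p B_{\zeta r}} w^2 \;\geq\; \frac{\kappa (1-\theta^{n})}{C^{2}} \dashint_{\p B_{\theta r}} w^2,
\end{equation*}
where $C$ is the constant from Corollary~\ref{cor_reverse_doubling_gen}.

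Next I would invoke \eqref{H_eq_gen} with $R_1=\zeta r$ and $R_2=\theta r$, take logarithms, and absorb the $O(1)(R_2-R_1)$ term into the constants since $R_2\le r_0$. This yields
\begin{equation*}
2\int_{\zeta r}^{\theta r} \frac{N(s)}{s}\,ds \;\leq\; -\log\kappa + C',
\end{equation*}
for a constant $C'$ depending on $n,\alpha,\gamma,K,\zeta,\theta$. To turn this into a bound on $N(\zeta r)$ itself I would use Theorem~\ref{N_L_infty}: for every $s\in[\zeta r,\theta r]$ it gives $N(\zeta r)\le c_1+c_2 N(s)$, i.e.\ $N(s)\ge (N(\zeta r)-c_1)/c_2$, so that
\begin{equation*}
\frac{2\log(\theta/\zeta)}{c_2}\bigl(N(\zeta r)-c_1\bigr) \;\leq\; -\log\kappa + C',
\end{equation*}
which rearranges to \eqref{freq_bound_gen} with a constant $C_{\zeta}$ of the required form (the case $N(\zeta r)\le c_1$ is trivial).

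For \eqref{doubling_ab_gen} I would simply plug the bound on $N(\zeta r)$ into Theorem~\ref{thm_doubling_gen} with $R_1=\beta r$ and $R_2=\zeta r$: the exponent $c_1+c_2 N(\zeta r)$ becomes a linear function of $(1-\log\kappa)$, and since $(\zeta/\beta)^{-\log\kappa}=\kappa^{-\log(\zeta/\beta)}$, collecting the $\kappa$-independent factors into $C_1$ and the $\kappa$-dependent exponent into $C_2$ yields the stated inequality. The only genuinely delicate point, and the main obstacle, is making sure the $O(1)$ term in \eqref{H_prime_gen} and the affine bound in Theorem~\ref{N_L_infty} do not compound into an estimate worse than polynomial in $\kappa^{-1}$; choosing $\theta$ strictly between $\zeta$ and $1$ so that $\log(\theta/\zeta)>0$ gives enough room for the integral $\int_{\zeta r}^{\theta r}N(s)/s\,ds$ to control $N(\zeta r)$ itself, and this is what ultimately produces the clean form $C_{\zeta}(1-\log\kappa)$.
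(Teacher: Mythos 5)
Your proposal is correct and follows essentially the same route as the paper's proof: the same choice $\theta=(1+\zeta)/2$, the same conversion of the volume hypothesis into a boundary inequality via Corollary \ref{cor_reverse_doubling_gen} and the layer-cake computation, the same use of \eqref{H_eq_gen} together with Theorem \ref{N_L_infty} to extract the bound on $N(\zeta r)$, and the same application of Theorem \ref{thm_doubling_gen} to deduce \eqref{doubling_ab_gen}. The only difference is presentational: you spell out explicitly the pointwise lower bound $N(s)\geq (N(\zeta r)-c_1)/c_2$ under the integral, which the paper leaves implicit.
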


\begin{proof}
We copy the proof of Lemma \ref{lemma_freq_from_doubling}. Choose $\theta$ such that $\zeta <\theta <1$, e.g.~$\theta=\frac{1+\zeta}{2}$.
 Using Corollary \ref{cor_reverse_doubling_gen} and  the assumption \eqref{assume_gen}, we obtain
\begin{align*}
 \dashint_{\p B_{\zeta r}} w^2 &\geq \frac{1}{C} \dashint_{B_{\zeta r}} w^2 \geq \frac{1}{C} \kappa \dashint_{ B_{ r}} w^2%, \\
%\int_{B_r} u^2 
\geq \frac{1}{C} \kappa \frac{1}{\omega_n r^n} \int_{\theta r}^r \int_{\p B_{\rho}} w^2 d\sigma d\rho  \\
&\geq \frac{1}{C} \kappa \frac{1}{\omega_n r^n} \int_{\theta r}^r \left(\frac{\rho}{\theta r} \right)^{n-1} d\rho \int_{\p B_{\theta r}} w^2 
= \frac{1}{C} \kappa (1-\theta^n) \dashint_{\p B_{\theta r}} w^2.
\end{align*}
Using \eqref{H_eq_gen},  
and Theorem \ref{N_L_infty}, %and the comparability of $H(r)$ and $\int_{\partial B_r} w^2$, 
we further obtain
\begin{align*}
C -\log ( \kappa (1-\theta^n)) &\geq \log \left(\frac{H(\theta r)/(\theta r)^{n-1}}{ H(\zeta r)/(\zeta r)^{n-1}}   \right) \\
&= O(1)(\theta -\zeta)r +
2\int_{\zeta r}^{\theta r} \frac{N(\rho)}{\rho} \\
&\geq -C + \frac{1}{C} N(\zeta r) \log (\theta/\zeta),
\end{align*}
and \eqref{freq_bound_gen} follows. The inequality \eqref{doubling_ab_gen} follows from this bound on $N(\zeta r)$ and
the doubling condition \eqref{doubling_gen}:
\begin{align*}
\dashint_{B_{\beta r}} w^2 
\geq  
\frac{1}{C} \left(\frac{\beta}{\zeta} \right)^{c_1+c_2 N(\zeta r)} \dashint_{B_{\zeta r}} w^2
\geq
\frac{1}{C}\left(\frac{\beta}{\zeta} \right)^{\widetilde{C_1}-\widetilde{C_2} \log \kappa}
%{ \frac{- \log \left( {\kappa} (1-\gamma^n) \right)}{2 \log \left(\gamma/ \alpha \right)}}
\dashint_{B_{\zeta r}} w^2
\\
=
%\left( {\kappa (1-\gamma^n)}  \right)^{\frac{\log (\alpha/\beta)}{\log (\gamma/\alpha)}} 
\frac{1}{C_1}\kappa^{C_2}
\dashint_{B_{\zeta r}} w^2.
\end{align*}
\end{proof}

%----------------------------------------------------------------------------------------------------------------------------------------------

\section{Doubling Condition for Steklov Eigenfunctions} \label{sec_doubling}
In this section, $\Omega\subset \R^n$ will  denote a $C^2$ domain, and $u$ will be a Steklov eigenfunction corresponding
to eigenvalue $\lambda$, harmonically extended to $\Omega$, i.e~satisfying \eqref{main}:
\begin{equation*} %\label{main}
\begin{aligned}
\Delta u &= 0  &\text{ in } \Omega, \\
\frac{\partial u}{\partial \nu} &= \lambda u &\text{ on } \partial\Omega.
\end{aligned}
\end{equation*}

In this section we prove Theorem \ref{thm_doubling_bdry}, i.~e.~the doubling condition -- controlling the $L^2$-norm of $u$ on a ball $B_{2r}(x)$ by the $L^2$-norm of $u$
on a smaller ball $B_r(x)$.

\subsection{Reduction to Neumann boundary condition and reflection across boundary} \label{reflection}
%In this section we will...
%\medskip
\begin{notation} \label{delta}
Since $\partial\Omega$ is compact and $C^2$, it has bounded curvature
and there exists $\delta>0$ such that the map 
$$
(y,t) \mapsto y+t\nu(y)
$$ 
is one-to-one from $\partial\Omega \times (-\delta,\delta)$ onto $\delta$-neighborhood of $\partial\Omega$. 
Fix this $\delta$ (depending on $\Omega$) for the rest of this section.
For any $\rho\leq \delta$, denote 
\begin{align*}
\{y+t\nu(y)\  |\  y\in\partial\Omega, t\in(-\rho,0)\} &
= \{x\in\R^n| \dist(x,\partial\Omega) < \rho\} \cap\Omega
=\Omega_{\rho}, \\
\{y+t\nu(y)\  |\  y\in\partial\Omega, t\in(0,\rho)\} &= \{x\in\R^n| \dist(x,\partial\Omega) < \rho\} \setminus \bar{\Omega} = \Omega_{\rho}'. 
\end{align*}
This means that for each $x\in \Omega_{\delta}$, there exists a unique closest point on the boundary $\pO$, and for each
$y\in\pO$ there exists $x\in \overline{\Omega_{\delta}}$ such that $\dist(x,y)=\delta$ and $B(x,\delta)\subset \Omega$.
Furthermore, since $\Omega \setminus \Omega_{\delta}$ is connected,
for each two points $y_1,y_2\in\pO$ and the corresponding
points $x_1,x_2 \in \overline{\Omega_{\delta}}$ such that $\dist(x_1,y_1)=\dist(x_2,y_2)=\delta$ and $B(x_1,\delta), B(x_2,\delta)\subset \Omega$,
there exists a curve $\Gamma_{x_1,x_2}$ in $\Omega\setminus \Omega_{\delta}$ with endpoints $x_1, x_2$. If we look at $\Gamma_{x_1,x_2}$ as a set of points
in $\Omega$, then 
$$\{x\in \R^n: \dist(x,\Gamma_{x_1,x_2})<\delta \}\subset \Omega.$$ 
We will need these properties later.
\end{notation}

We want to extend the function $u$ defined on $\tO \cup \pO$ to 
$$\tO\cup\pO\cup\tO' = D,$$ 
so that
the boundary $\pO$ becomes a hypersurface in $D$. In order to do so, define 
$$
v(x):= u(x) e^{\lambda d(x)}
\ \ \ \text{ for } x\in \tO\cup \pO, 
$$
where $d(x)=\dist(x,\partial\Omega)$ is the distance function.
Fix this notation throughout the rest of this section.

Note that $v(x)=0$ if and only if $u(x)=0$ and $v(x)=u(x)$ on $\pO$.
Since $\partial\Omega$ is $C^2$, so is $d(x)$ on $\tO$. 
For $y\in\pO$ and $x=y+t\nu(y)\in\tO$, we have 
\begin{equation} \label{dist}
\begin{aligned}
\nabla d(x) &= \nabla d(y)=-\nu(y), \\
\Delta d(x) &= -\sum_{i=1}^{n-1} \frac{\kappa_i(y)}{1-\kappa_i(y) d(x)},
\end{aligned}
\end{equation}
where $\{\kappa_i\}$ are the principal curvatures of $\pO$, see the Appendix, pp.~381-383 in \cite{trudinger}.
From \eqref{main} we get that $v$ satisfies the equation 
\begin{equation} \label{main_v}
\begin{aligned}
%\sum_{i,j=1}^n \frac{\p}{\p x_i} \left( a_{i,j} \frac{\p u}{\p x_j}\right) 
\Div (A(x) \nabla v) 
%\Delta v 
+b(x) \cdot \nabla v +c(x) v 
%\Delta v -2\lambda \nabla d \cdot \nabla v + (\lambda^2-\lambda \Delta d)v 
&=0 &\text{ in } \tO, \\
\frac{\partial v}{\partial \nu} &= 0 &\text{ on } \pO,
\end{aligned}
\end{equation}
where %$A=(a_{ij})_{i=1}^n$, 
%$()_{x_i}$ denotes the partial derivative with respect to $x_i$, and
\begin{equation} \label{abc}
%\begin{aligned}
%a_{ij}=\delta_{ij},  \\
\left.
\begin{array}{ll}
A=I \\
b= -2\lambda \nabla d \\
c= \lambda^2-\lambda \Delta d
\end{array}
\right\}
\text{ in } \tO.
%\end{aligned}
\end{equation}

Write each $x\in\tO$ as $x=y+t\nu(y)$, 
$y\in\pO$ and $t\in(0, -\delta)$, and consider the reflection map
\begin{align*}
\Psi&: \ \tO \to \tO' \\
%x=y+t\nu(y) &\implies 
\Psi(x)&= y-t\nu(y).
\end{align*}
Since $\pO$ is $C^2$, $\Psi$ is $C^2$. We will also use the notation $\Psi(x)=x'$. 
For $x'\in \tO'$ define
$$
v(x'):= v(\Psi^{-1}(x'))=v(x).
$$
Then $v\in C^2(\tO')$, and since $\frac{\p v}{\p \nu}=0$ on $\pO$, we also have that
$\nabla v$ is Lipschitz in $D$, and hence $v$ is twice weakly differentiable in $D$.
%(here $D=\tO\cup \pO\cup\tO'$)
By \eqref{main_v}, $v$ satisfies
\begin{equation} \label{main_tO'}
\Div (A \nabla v ) + b\cdot \nabla v +cv=0  \ \ \text{ in } \tO',
\end{equation}
with $A=(a^{ij})_{i,j=1}^n$,
\begin{equation} \label{abc2}
\begin{aligned}
 a^{ij}(x') &=\sum_{k=1}^n \frac{\p \Psi^i}{\p x_k}(x) \frac{\p \Psi^j}{\p x_k}(x) \\
b^i(x') &= - \frac{\p}{\p x_j'} a^{ij}(x') + \Delta \Psi^i(x) +\nabla\Psi^i(x)\cdot b(x) \\
c(x') &= c(x).
\end{aligned} 
\end{equation}
Hence $v$ satisfies
\begin{equation} \label{main_D}
\Div (A \nabla v ) + b\cdot \nabla v +cv=0  %\ \  a.e.\text{ in } D,
\end{equation}
$a.e.$ in $D$,
where by \eqref{abc} and \eqref{abc2} we have the bounds
\begin{equation} \label{abc_infty}
\begin{aligned}
||A||_{L^{\infty}(D)} &\leq C, \\
||b||_{L^{\infty}(D)} &\leq C \lambda, \\
||c||_{L^{\infty}(D)} &\leq C \lambda^2,
\end{aligned}
\end{equation}
with the constant $C$ depending only on the domain $\Omega$. Since $v$ is also twice weakly differentiable, it
%We will see soon that $v$ 
satisfies equation \eqref{main_D} %also in the weak 
in the strong sense (as in \cite[Chapter 9]{trudinger}). %?STRONG?????
Next, we show that $A$ is Lipschitz across $\pO$, and hence
%We will see soon that 
$v$ is also a weak solution of \eqref{main_D}. %(this is not yet clear since we do not know how $A$ behaves across the hypersurface $\pO$).

Note that 
\begin{equation} \label{A_on_pOmega}
A(x')=\nabla \Psi (x)(\nabla \Psi)^T(x) \ \ \text{ for } x'\in\tO'.
\end{equation}

\begin{prop}
%Next we will show that 
$A$ is uniformly Lipschitz in $D$, with the Lipschitz constant depending
only on $\Omega$. %the boundary $\pO$. 
\end{prop}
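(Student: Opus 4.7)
The plan is to combine smoothness on each side of $\pO$ with a pointwise matching computation on $\pO$. On the interior side, $A\equiv I$ on $\tO$ by \eqref{abc}, so $A$ is trivially Lipschitz there with constant $0$. On the reflected side $\tO'$, the formula $a^{ij}(x')=\sum_k \frac{\p \Psi^i}{\p x_k}(x)\frac{\p \Psi^j}{\p x_k}(x)$ exhibits $A$ as a smooth function of $\nabla\Psi$. Since $\pO$ is $C^2$, the unit normal $\nu$ is $C^1$ with uniform bounds, hence the parametrization $F(y,t)=y+t\nu(y)$ has Lipschitz gradient; consequently so does $\Psi$, and $A$ is Lipschitz on $\overline{\tO'}$ with a constant depending only on the $C^2$ geometry of $\pO$.

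The key step is to verify that the two expressions for $A$ agree on $\pO$, i.e.~$A(y)=I$ for every $y\in\pO$. Fix $y\in\pO$. Since $\Psi$ fixes $\pO$ pointwise, any $C^1$ curve $\gamma$ in $\pO$ with $\gamma(0)=y$ satisfies $\Psi(\gamma(s))=\gamma(s)$, so $\nabla\Psi(y)\tau=\tau$ for every $\tau\in T_y\pO$. On the other hand, the defining relation $\Psi(y+t\nu(y))=y-t\nu(y)$ differentiated at $t=0$ yields $\nabla\Psi(y)\nu(y)=-\nu(y)$. Thus $\nabla\Psi(y)$ is the orthogonal reflection across $T_y\pO$, which is an orthogonal transformation, so $\nabla\Psi(y)\nabla\Psi(y)^T=I$. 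In view of \eqref{A_on_pOmega}, this gives $A(y)=I$, matching the value on the $\tO$ side, so $A$ is continuous across $\pO$.

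With continuity established, the global Lipschitz bound follows by a short gluing argument. For $p,q$ both in $\tO$ or both in $\overline{\tO'}$, the bound is immediate from what was shown above. For $p\in\tO$ and $q\in\tO'$ with $|p-q|$ small enough that $\overline{pq}\subset D$, the segment must cross $\pO$ at some point $r$ (by the intermediate value theorem applied to the signed distance to $\pO$), and then
\[
|A(p)-A(q)| \leq |A(p)-A(r)| + |A(r)-A(q)| = 0 + L|r-q| \leq L|p-q|,
\]
where $L$ is the Lipschitz constant on $\overline{\tO'}$. For pairs $p,q$ farther apart, the bound follows from the uniform $L^\infty$-boundedness of $A$ on $D$. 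The main obstacle is really the reflection identity $\nabla\Psi(y)\nabla\Psi(y)^T=I$ on $\pO$; once this is in hand, everything else is routine.
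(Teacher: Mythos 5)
Your proposal is correct and follows essentially the same route as the paper: Lipschitz regularity on each side of $\pO$ separately, reduced to the matching identity $\nabla\Psi(x)(\nabla\Psi)^T(x)=I$ on $\pO$. The only difference is cosmetic — you verify that identity invariantly, by showing $\nabla\Psi(y)$ acts as the identity on $T_y\pO$ and as $-1$ on $\nu(y)$ and is therefore an orthogonal reflection, whereas the paper computes $\nabla\Psi$ explicitly when the tangent plane is horizontal and reduces the general case to it by a rotation; the two arguments are the same computation in different clothing.
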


\begin{proof}
This is clear in $\tO$ and $\tO'$; what we are concerned about is the hypersurface $\pO$.  
Let us show that 
\begin{equation} \label{identity}
\nabla \Psi (x) (\nabla \Psi )^T (x) =I \ \ \ \ \ \text{ for }  x\in\pO,
\end{equation}
where 
we take $\Psi (x)=x$ for $x\in \pO$ (and consider the gradient of $\Psi$ in points on $\pO$
only from the side of within $\Omega$).

Take any $x_0\in\pO$. If the tangent plane to $\pO$ at $x_0$ is parallel to the plane $\{x_n=0\}$, then
it is easy to see that 
$$\nabla \Psi (x) = \begin{pmatrix}
                    1 & 0 & \cdots & 0 & 0 \\
                    0 & 1 & \cdots & 0 & 0 \\ 
                     \vdots & & \ddots & & \vdots \\
                    0 & 0 & \cdots & 1 & 0 \\
                    0 & 0 & \cdots & 0 &-1
                    \end{pmatrix}
 $$ 
and indeed $\nabla \Psi (x_0) (\nabla \Psi )^T (x_0) =I$.  If the tangent plane to $\pO$ at $x_0$ is general,
there exists a rotation $R$ which maps this tangent plane to a tangent plane parallel to $\{x_n=0\}$. Then 
by the change of coordinates $y=Rx$, the mapping $\tPsi(y)= R \Psi (R^Ty)$ will be exactly like $\Psi$
in the special case discussed, and we will have
\begin{align*}
I&= \nabla \tPsi (y_0) (\nabla \tPsi )^T (y_0) = R   \nabla \Psi (x_0) R^T R  (\nabla \Psi )^T (x_0) R^T = \\ 
&= R \nabla \Psi (x_0) (\nabla \Psi )^T (x_0) R^T, \\
%\Rightarrow 
I &= \nabla \Psi (x_0) (\nabla \Psi )^T (x_0).
\end{align*}
%(we are using the rotation property $R^{-1}=R^T$ a few times).

Hence \eqref{identity} is true, and we can define $A(x)=I=\nabla \Psi (x) (\nabla \Psi )^T (x)$ for $x\in\pO$.
Then by \eqref{abc} $A$ is Lipschitz in $\overline{\tO}$, by \eqref{abc2}, $A$ is Lipschitz in $\overline{\tO'}$,
and hence $A$ is Lipschitz in the whole $D$.

\end{proof}

\begin{comment}
\begin{remark}

Note that if we start with a more general elliptic matrix $A$ than the Laplacian in our domain $\tO$,
then after reflexion we do not necessarily end up with a Lipschitz extension of $A$. In general, the reflected solution will satisfy
\eqref{main_tO'} with 
$$
A(x')= \grad \Psi(x) A(x) (\grad \Psi)^T(x).
$$
For instance, if we consider the problem in $\R^2$ and take
$$
A(x_0)= \begin{pmatrix}
                    1 & 0 \\
                    0 & 2 
      \end{pmatrix}, 
\hskip1cm
\grad \Psi(x_0)= \begin{pmatrix}
                    0 & 1 \\
                    1 & 0 
      \end{pmatrix}
$$
for some $x_0\in\pO$ (having the tangent line in $x_0$ parallel to the line $x_1-x_2=0$ gives us this $\grad\Psi$), we get
$$
\grad \Psi(x_0) A(x_0) (\grad \Psi)^T(x_0) =
\begin{pmatrix}
                    2 & 0 \\
                    0 & 1 
      \end{pmatrix}
\neq A(x_0),
$$
so $A$ will be discontinuous at $x_0$.
\end{remark}
\end{comment}

\begin{prop}
Equation \eqref{main_D} is uniformly elliptic in $D$, the ellipticity constant depending only on $\Omega$.
\end{prop}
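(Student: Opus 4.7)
The plan is to split $D = \tO \cup \pO \cup \tO'$ into three regions and handle each separately. In $\tO$ we have $A = I$ by \eqref{abc}, so ellipticity is trivial with constant $1$. On $\pO$ the previous proposition (equation \eqref{identity}) shows $A = I$ as well. The only nontrivial case is a point $x' \in \tO'$.

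For such $x'$, write $x = \Psi^{-1}(x') \in \tO$. By formula \eqref{abc2} we have $A(x') = \nabla\Psi(x)\,(\nabla\Psi)^T(x)$, hence for any $\xi \in \R^n$
\begin{equation*}
\xi^T A(x') \xi \;=\; \bigl|(\nabla\Psi)^T(x)\,\xi\bigr|^2 .
\end{equation*}
Thus the required ellipticity $\xi^T A(x')\xi \geq \alpha |\xi|^2$ is equivalent to a uniform positive lower bound on the smallest singular value of $\nabla\Psi(x)$ as $x$ ranges over $\overline{\tO}$. So the whole problem reduces to producing such a constant $\alpha>0$ depending only on $\Omega$.

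To obtain the bound, I would argue by continuity and compactness. Because $\pO$ is $C^2$ and $\delta$ has been fixed in Notation \ref{delta}, the parametrization $(y,t)\mapsto y+t\nu(y)$ is a $C^1$-diffeomorphism from $\pO\times(-\delta,\delta)$ onto a neighborhood of $\pO$, and the same is true for $(y,t)\mapsto y-t\nu(y)$. Composing these two charts expresses $\Psi$, so $\Psi$ is a $C^1$-diffeomorphism from $\overline{\tO}$ onto $\overline{\tO'}$, and in particular $\nabla\Psi$ is continuous and invertible on the compact set $\overline{\tO}$. Identity \eqref{identity} established in the preceding proposition says that $(\nabla\Psi)(x)\,(\nabla\Psi)^T(x) = I$ for every $x \in \pO$, so $\nabla\Psi(x)$ is orthogonal on $\pO$ and its smallest singular value equals $1$ there. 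By continuity of $\nabla\Psi$ on $\overline{\tO}$ and compactness, the smallest singular value is therefore bounded below by some $\sqrt{\alpha}>0$ depending only on $\Omega$.

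Combining this with the case $x \in \tO\cup\pO$, we conclude that \eqref{ellipticity} holds on all of $D$ with a constant depending only on $\Omega$. There is no substantial obstacle in this argument; the only point that requires a little care is the passage from the pointwise identity $A(x)=I$ on $\pO$ to the uniform lower bound on the singular values of $\nabla\Psi$ in $\overline{\tO}$, and this is handled cleanly by continuity together with the compactness of $\pO$ and the a priori choice of $\delta$.
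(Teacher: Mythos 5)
Your proposal is correct and follows essentially the same route as the paper: the only nontrivial region is $\tO'$, where $\xi^T A(x')\xi = |(\nabla\Psi)^T(x)\xi|^2$, and the uniform lower bound comes from the fact that $\nabla\Psi$ is continuous and invertible on the compact set $\overline{\tO}$ (the paper phrases this as the finiteness of $\eta=\sup_{x\in\tO}|(\nabla\Psi)(x)^{-1}|$, which is the same as your lower bound on the smallest singular value).
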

This is clear for all $x\in\tO$,
and for $x\in\tO'$ we have by \eqref{A_on_pOmega}
$$
\xi^T A(x') \xi = \xi^T \nabla \Psi (x)(\nabla \Psi)^T(x) \xi = |(\nabla \Psi)^T(x) \xi|^2 \geq \frac{1}{\eta^2} |\xi|^2,
$$
where
$$
\eta = \sup_{x\in\tO} |(\nabla \Psi)(x)^{-1}|= \sup_{x'\in\tO'} |\nabla \Psi^{-1}(x')|
$$
depends only on $\Omega$.

\begin{comment}
\begin{prop}
The function $v$ %(which we already know to be in $C^1(D)$) 
satisfies \eqref{main_D} in the weak sense, i.e.~for all
$\varphi\in C_0^{\infty}(D)$ we have
$$
\int_D -A\nabla v \cdot \nabla \varphi +b\nabla v \varphi +cv\varphi=0.
$$
\end{prop}

\begin{proof}
Since $v\in C^2(\tO)\cap C^2(\tO')\cap C^1(D)$ and it satisfies \eqref{main_D} in $\tO$ and $\tO'$ pointwise, we have
\begin{align*}
\int_D & \left(-A\nabla v \cdot \nabla \varphi +b\nabla v \varphi +cv\varphi \right) \\
=& \int_{\tO} \left( -A\nabla v \cdot \nabla \varphi +b\nabla v \varphi +cv\varphi \right) 
+ \int_{\tO'} \left( -A\nabla v \cdot \nabla \varphi +b\nabla v \varphi +cv\varphi \right) \\
=& \int_{\tO} \left( \Div (A\nabla v)  \varphi +b\nabla v \varphi +cv\varphi \right) + \int_{\partial \tO}  A\nabla v  \varphi \cdot \nu_{\tO} \\
&+ \int_{\tO'} \left( \Div (A\nabla v)  \varphi +b\nabla v \varphi +cv\varphi \right) + \int_{\partial \tO} A\nabla v  \varphi \cdot \nu_{\tO'} \\ 
=& \int_{\pO} A\nabla v  \varphi \cdot \nu - \int_{\pO} A\nabla v  \varphi \cdot \nu = 0.
\end{align*}
We used that $\varphi=0$ on $\partial D \supset (\partial\tO \setminus \pO)\cup (\partial\tO' \setminus \pO)$,  
the exterior normals for $\tO$ and $\tO'$ on $\pO$ satisfy $\nu_{\tO}=\nu=-\nu_{\tO'}$,
and $A$ is continuous across $\pO$.
\end{proof}
\end{comment}

%-----------------------------------------------------------------------------------------
\medskip
\noindent
\textbf{Doubling condition for $v$:}

\noindent
To prove Theorem \ref{thm_doubling_bdry}, we will first derive an analogous doubling condition
on the solid $D$ for $v$, and then use the approach as in \cite{lin_nodal} (used for parabolic equations)
to deduce Theorem \ref{thm_doubling_bdry}:

\begin{thm} \label{thm_doubling}
 There exist constants $r_0, C>0$ depending only on $\Omega$ and $n$ such that %$0<2r_0<\delta$ and
for any $r\leq r_0/2\lambda$ and $x\in \pO$,
\begin{equation} \label{doubling_v}
\int_{B(x,2r)}  v^2 \leq  2^{C\lambda^5} \int_{B(x,r)}  v^2.
\end{equation}
\end{thm}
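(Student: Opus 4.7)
The plan is to reduce the boundary doubling inequality to the standard elliptic doubling theorem (Theorem \ref{thm_doubling_gen}) via a rescaling that absorbs the $\lambda$-dependence of the coefficients, and then to separately establish a bound $N_v(x_0, 2r) \leq C \lambda^5$ on the frequency of the reflected solution.

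For fixed $x_0 \in \pO$, I would introduce the rescaled function $w(y) := v(x_0 + y/\lambda)$. A direct computation from \eqref{main_D} shows that $w$ solves an equation $\Div(\tilde{A}\nabla w)+\tilde{b}\cdot\nabla w+\tilde{c}w = 0$ with
$$
\tilde{A}(y)= A(x_0+y/\lambda),\quad \tilde{b}(y)= \frac{1}{\lambda}b(x_0+y/\lambda),\quad \tilde{c}(y)= \frac{1}{\lambda^2}c(x_0+y/\lambda),
$$
and the bounds \eqref{abc_infty} translate to $\|\tilde A\|_\infty,\|\tilde b\|_\infty,\|\tilde c\|_\infty \leq C$ and $\mathrm{Lip}(\tilde A)\leq C/\lambda \leq C$, all depending only on $\Omega$ and $n$. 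The key point is that the powers $\lambda^0,\lambda,\lambda^2$ in the coefficient bounds match the scaling of the $0,1,2$-nd order parts of the PDE, so the rescaling renders the equation coefficient-uniform in $\lambda$. A direct calculation also yields the scale invariance of the frequency, $N_w(0,R)=N_v(x_0, R/\lambda)$.

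Because the rescaled equation satisfies the hypotheses of Section \ref{sec_frequency_gen} with $\lambda$-independent constants, Theorem \ref{thm_doubling_gen} applied with $R_1=\lambda r$, $R_2=2\lambda r$ (which is admissible precisely when $r\leq r_0/(2\lambda)$) gives
$$
\dashint_{B(0,2\lambda r)} w^2 \leq C\cdot 2^{c_1 + c_2 N_w(0, 2\lambda r)} \dashint_{B(0,\lambda r)} w^2,
$$
and changing variables $x=x_0+y/\lambda$ produces
$$
\int_{B(x_0, 2r)} v^2 \leq C\cdot 2^{c_1 + c_2 N_v(x_0, 2r)} \int_{B(x_0, r)} v^2.
$$
So \eqref{doubling_v} reduces to showing $N_v(x_0, 2r) \leq C\lambda^5$ uniformly in $x_0\in\pO$ and in $r\leq r_0/(2\lambda)$.

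The main obstacle is precisely this frequency estimate. I would attack it in the reverse direction indicated by Lemma \ref{lemma_freq_from_doubling_gen}: first produce a macroscopic $L^2$ doubling for $v$ comparing $\int_{B(x_0,\delta/2)} v^2$ to $\int_{B(x_0,r)} v^2$ at a fixed scale $\delta$ (the constant from Notation \ref{delta}), then invoke Lemma \ref{lemma_freq_from_doubling_gen} to convert that doubling into a bound on $N_v(x_0, 2r)$. The macroscopic doubling in turn would be extracted from the global Steklov structure of $u$ by following the chain of balls suggested in Notation \ref{delta}: starting from $x_0\in\pO$, move through the interior via the curve $\Gamma_{x_1, x_2}$, land at a reference interior ball $B(x_*, \delta)\subset\Omega$ where $u$ is harmonic and its $L^2$ can be bounded below using $\int_\Omega |\nabla u|^2 = \lambda\int_{\pO} u^2$ (integration by parts) together with Poisson-kernel and Corollary \ref{cor_reverse_doubling_gen} / Proposition \ref{chain_frequency} propagation estimates. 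Each link of this chain involves applying Theorem \ref{thm_doubling_gen} or Theorem \ref{N_L_infty} to $v$ in its original (unrescaled) form, where $K = C\lambda^2$, which injects polynomial-in-$\lambda$ factors into $c_1, c_2$. The chain itself has a length depending only on $\Omega$, but each step carries a polynomial loss; bookkeeping these losses and combining with Lemma \ref{lemma_freq_from_doubling_gen} should yield $N_v(x_0, 2r)\leq C\lambda^5$. The hard part is controlling the quantitative interaction between the fixed-scale chain structure and the $\lambda$-dependent coefficient bounds so that the final exponent remains polynomial — as the authors stress after Theorem \ref{thm_steklov_analytic}, the essential difficulty is avoiding an exponential $e^{C\lambda}$ at this stage.
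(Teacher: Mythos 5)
Your reduction of \eqref{doubling_v} to a frequency bound $N_v(x_0,2r)\leq C\lambda^5$ via the $1/\lambda$-rescaling is the same opening move as the paper's, and is fine. The gap is in the frequency bound itself, and it is not a bookkeeping issue: the mechanism you propose for the interior propagation does not work. You plan to apply Theorem \ref{thm_doubling_gen} and Theorem \ref{N_L_infty} to the \emph{unrescaled} $v$ at fixed scales (links of a chain of length depending only on $\Omega$), accepting that $K=C\lambda^2$ ``injects polynomial-in-$\lambda$ factors into $c_1,c_2$.'' But those theorems are only valid for radii $R_2\leq r_0$ where $r_0$ itself degenerates with $K$ — for the equation satisfied by $v$ this forces radii of order $1/\lambda$, as the paper points out explicitly right after stating Theorem \ref{thm_doubling}. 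A chain of $1/\lambda$-balls crossing a fixed-scale region has $\sim\lambda$ links, each losing a multiplicative constant in the exponent, which is exactly the exponential bound $e^{C\lambda}$ one must avoid. There is no version of the general elliptic frequency machinery in the paper that applies to $v$ at scale $\delta$ with constants polynomial in $\lambda$.

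The paper escapes this in three ways that your sketch does not capture. First, the chain cannot start at an arbitrary $x_0$: one needs a pigeonhole point $y_*\in\pO$ (Lemma \ref{special_point}) where the $r_0/\lambda$-ball already captures a $\lambda^{-(2n-1)}$-fraction of $\int_\Omega u^2$; only there does the doubling begin with a polynomial loss. Second, all interior propagation is done with the \emph{harmonic} function $u$, not with $v$, precisely so that Almgren monotonicity and Lemma \ref{lemma_freq_from_doubling} apply at every scale with constants independent of $\lambda$. Third — and this is where the exponent $5$ actually comes from — the transitions between scale $1/\lambda$ and scale $\delta$ are handled by two quantitatively delicate steps: expanding from radius $\sim r_0/\lambda$ to radius $\delta$ costs a single factor $2^{C\lambda\log\lambda}$ (Lemma \ref{lemma_freq_from_doubling} with $\alpha,\theta$ at distance $\sim 1/\lambda$ from $1$), while shrinking back from $\delta$ to $\sim r_0/\lambda$ near the target point requires $\sim\log\lambda/\log(6/5)$ steps, each of which \emph{squares} the accumulated doubling constant, yielding $2^{C\lambda^{1+\log 2/\log(6/5)}\log\lambda}\leq 2^{C\lambda^5}$. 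The step you defer as ``the hard part'' is not a refinement of your argument — it is the theorem, and your proposed tool for it (fixed-scale applications of the general elliptic doubling to $v$) is inadmissible.
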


The main difficulty we encounter is that for a general elliptic equation, an adaptation of the results in \cite{lin_garofalo_2} gives us a bound
for the frequency only on small balls of radius of order $1/\lambda$. That globally translates into
a suboptimal exponential bound. %mentioned in Remark \ref{exponential}. 
Therefore we move on from the more general equation \eqref{main_D}, 
which is satisfied by $v$ in the neighborhood of $\pO$, back to the original Laplace equation inside of $\Omega$, and then back to the neighborhood of the boundary 
with the equation \eqref{main_D} again.

%----------------------------------------------------------------------------------------

\subsection{Frequency for $v$} \label{frequency_for_v}
\medskip

Near the boundary $\pO$, we will consider the equation \eqref{main_D}. Hence
we use the  generalization of the frequency \eqref{frequency} for more general
elliptic equations from Subsection \ref{sec_frequency_gen}. Let us apply the results
from that subsection to our function $v$.

%\noindent
Recall Theorem \ref{N_L_infty}: for a solution $w$ to a general elliptic equation on $B_1$,
it gives us the bound for the frequency of the form $N(R_1) \leq c_1+ c_2 N(R_2)$,
if $0<R_1 < R_2 \leq r_0$.  
%The constants $r_0, c_1,c_2$ come up in further properties of the frequency derived in \cite{lin_garofalo_2}.
However, it
%Theorem \ref{N_L_infty} 
does not give an explicit bound for $r_0, c_1, c_2$ in terms of
the $L^{\infty}$ bounds on the coefficients of the equation. 
If we use the bounds \eqref{abc_infty} instead of \eqref{L-infinity} and go through
the proof of the theorem, we obtain that \eqref{N_L_infty_bound} holds for $r_0 \sim  {1/\lambda}$,
$c_1, c_2 \sim 1$. We can obtain these bounds easier from Theorem \ref{N_L_infty} by scaling.

\medskip

\noindent
\textbf{Scaling the equation:}

\noindent
\begin{comment}
Recall that the function $v$ 
satisfies the equation \eqref{main_D}:
\begin{equation*} 
\Div (A \nabla v ) + b\cdot \nabla v +cv=0  \ \  a.e.\text{ in } D,
\end{equation*}
%$a.e.$ in $D$,
with $A$ elliptic and Lipschitz, the ellipticity and Lipschitz constants depending
only on $\Omega$. %For fixed $\Omega$, the eigenvalue $\lambda>0$ is bounded from below, so by
By \eqref{abc_infty} we have
\begin{equation*}  
\begin{aligned}
||A||_{L^{\infty}(D)} &\leq C, \\
||b||_{L^{\infty}(D)} &\leq C \lambda, \\
||c||_{L^{\infty}(D)} &\leq C \lambda^2,
\end{aligned}
\end{equation*}
with the constant $C$ depending only on the domain $\Omega$. 
\end{comment}
Consider equation \eqref{main_D}
in a ball $B(x_0,r_1/\lambda)\subset D$ and define
\begin{equation*}
 v_{x_0,\lambda} (x):= v(x_0+x/\lambda) \ \  \text{ for } x\in B(0,r_1).
\end{equation*}
Then $v_{x_0,\lambda}$ satisfies 
\begin{equation} \label{equation_v_lambda}
\Div (A_{x_0,\lambda} \nabla v_{x_0,\lambda} ) + b_{x_0,\lambda} \cdot \nabla v_{x_0,\lambda} +c_{x_0,\lambda} v_{x_0,\lambda}=0  \ \  a.e.\text{ in } B(0,r_1),
\end{equation}
with
\begin{equation*}
\begin{aligned}
A_{x_0,\lambda}(y) &= A(x_0+y/\lambda), \\
b_{x_0,\lambda}(y) &= \lambda^{-1}b(x_0+y/\lambda ), \\
c_{x_0,\lambda}(y) &= \lambda^{-2}c(x_0+y/\lambda ), \\
\end{aligned}
\end{equation*}
so by \eqref{abc_infty}, $A_{x_0,\lambda}(y)$, $b_{x_0,\lambda}(y)$ and $c_{x_0,\lambda}(y)$
are bounded uniformly in $L^{\infty}$ by a constant depending only on $\Omega$. 
The ellipticity constant of $A$ does not change and the Lipschitz constant of $A$
only improves (since $\lambda \geq 1$), so we also have bounds on them depending only on $\Omega$.
Note that for $x_0\in \overline{\Omega}\cap D$, $A(x_0)=A_{x_0,\lambda}(0)=I$.
A simple change of variables yields that for $x_0\in \overline{\Omega}\cap D$, $y\in B(0,r_1)$ and $r\leq r_1-|y|$,
\begin{equation*}
\begin{aligned}
H_{v_{x_0, \lambda}}(y,r) &= \lambda^{n-1} H_v(x_0+y/\lambda, r/\lambda), \\
D_{v_{x_0, \lambda}}(y,r) &= \lambda^{n-2} D_v(x_0+y/\lambda, r/\lambda), \\
I_{v_{x_0, \lambda}}(y,r) &= \lambda^{n-2} I_v(x_0+y/\lambda, r/\lambda), \\
N_{v_{x_0, \lambda}}(y,r) &= N_v(x_0+y/\lambda, r/\lambda).
\end{aligned}
\end{equation*}

\begin{comment}
\begin{remark}
Notice that for $x\in \overline{\Omega}\cap D$, the principal part of equation \eqref{main_D} is the Laplacian,
i.e.~$A=I$. Hence, in points from $\overline\Omega \cap D$, the metric $g$ associated with $A$ is trivial,
and in these points we have
\begin{gather*}
H_v(r) = \int_{\p B_r} v^2,   \\
D_v(r)= \int_{B_r} |\grad\, v|^2,  \\
I_v(r)= \int_{B_r}(|\grad \, v|^2 + ub\cdot \grad\, v +cv^2 ). %, \\
%N_v(r)=\frac{r I(r)}{H(r)},
\end{gather*}
In particular, we can skip the argument about $H(r)$ and $\int_{\partial B_r} v^2$ being comparable -- they are equal in this case.
\end{remark}
\end{comment}

Next, we apply Theorem \ref{N_L_infty},
Proposition \ref{prop_H_prime_gen}, Corollary \ref{cor_reverse_doubling_gen}, Theorem \ref{thm_doubling_gen} and Lemma \ref{lemma_freq_from_doubling_gen}
to the function $v_{x_0,\lambda}$, and using the scaling above we rewrite the results in terms of $v$. We immediately obtain the following results.

\begin{prop} \label{N_L_infty_v}
Let $x_0\in \overline{\Omega}\cap D$, $B(x_0,r_1/\lambda)\subset D$.
Then there exist constants $c_1,c_2>0$, $r_0\in(0,r_1)$ depending only on $r_1$ and $\Omega$ such that 
\begin{equation} \label{N_L_infty_bound_v}
N_v(x_0,R_1) 
\leq c_1 + c_2 N_v(x_0,R_2) \ \  \text{ for any } 0<R_1< R_2 \leq r_0/\lambda. 
\end{equation}
\end{prop}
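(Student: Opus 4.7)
The plan is essentially to cash in on the scaling setup already prepared before the proposition: the whole point of introducing $v_{x_0,\lambda}$ was to reduce to an equation with coefficients bounded independently of $\lambda$, so that Theorem \ref{N_L_infty} applies with constants depending only on $\Omega$.

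First I would apply Theorem \ref{N_L_infty} to $v_{x_0,\lambda}$, which solves the equation \eqref{equation_v_lambda} on $B(0,r_1)$. The hypotheses of that theorem are satisfied: the ellipticity and Lipschitz constants of $A_{x_0,\lambda}$ and the $L^\infty$ bounds on $A_{x_0,\lambda}, b_{x_0,\lambda}, c_{x_0,\lambda}$ depend only on $\Omega$ (using $\lambda\geq 1$ and \eqref{abc_infty}), and the assumption $A_{x_0,\lambda}(0)=I$ holds because $A(x_0)=I$ for $x_0\in\overline{\Omega}\cap D$. Since Theorem \ref{N_L_infty} is stated on $B_1$ and we are working on $B(0,r_1)$, I would either rescale by $1/r_1$ (passing to $v_{x_0,\lambda}(r_1\cdot)$, which satisfies an equation of the same type with constants still depending only on $r_1$ and $\Omega$) or invoke the remark in the proof of Theorem \ref{N_L_infty} that the inequality remains valid on smaller balls via the same scaling trick. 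Either way, we obtain
$$
N_{v_{x_0,\lambda}}(0,\rho_1)\leq c_1+c_2 N_{v_{x_0,\lambda}}(0,\rho_2)
\quad\text{for all } 0<\rho_1<\rho_2\leq r_0',
$$
with $c_1,c_2>0$ and $r_0'\in(0,r_1)$ depending only on $r_1$ and $\Omega$.

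Next I would unscale using the identity
$$
N_{v_{x_0,\lambda}}(0,\rho)=N_v(x_0,\rho/\lambda)
$$
recorded just before the proposition statement. Setting $R_i=\rho_i/\lambda$ and $r_0=r_0'$, this immediately rewrites the inequality as
$$
N_v(x_0,R_1)\leq c_1+c_2 N_v(x_0,R_2)\quad\text{for all } 0<R_1<R_2\leq r_0/\lambda,
$$
which is \eqref{N_L_infty_bound_v}. The constants $c_1, c_2, r_0$ inherited from Theorem \ref{N_L_infty} depend only on the ellipticity, Lipschitz, and $L^\infty$ bounds of the scaled coefficients, hence only on $r_1$ and $\Omega$, as required.

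There is no real obstacle here; the only thing to be careful about is that the radius $r_0'$ produced by Theorem \ref{N_L_infty} depends on $r_1$ through the rescaling to $B_1$, which is why the conclusion is stated with $r_0$ depending on $r_1$. The factor $1/\lambda$ in the admissible range $R_2\leq r_0/\lambda$ is precisely the cost of the $\lambda$-dependence of the lower-order coefficients $b,c$ in \eqref{abc_infty}, and it is this $1/\lambda$ scale that will later force the more delicate argument in Section \ref{sec_doubling} to propagate the estimate across $\Omega$ via the harmonic interior.
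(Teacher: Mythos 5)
Your proposal is correct and is exactly the paper's argument: the paper obtains Proposition \ref{N_L_infty_v} by applying Theorem \ref{N_L_infty} to $v_{x_0,\lambda}$ (whose coefficients satisfy \eqref{ellipticity}--\eqref{L-infinity} with constants depending only on $\Omega$, and with $A_{x_0,\lambda}(0)=I$) and then translating back via the recorded identity $N_{v_{x_0,\lambda}}(0,R)=N_v(x_0,R/\lambda)$. Your handling of the $B(0,r_1)$ versus $B_1$ issue by a further rescaling matches the remark in the proof of Theorem \ref{N_L_infty} and correctly accounts for the dependence of $r_0$ on $r_1$.
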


\begin{prop} \label{thm_doubling_v}
Let $x_0\in \overline{\Omega}\cap D$, $B(x_0,r_1/\lambda)\subset D$.
Then there exist constants $C, c_1,c_2>0$, $r_0\in(0,r_1)$ depending only on $r_1$ and $\Omega$ such that 
%Let $w\in W_{\loc}^{1,2}(B_1)$ be a nonzero solution to \eqref{equation} in $B_1$. Then there exist constants $r_0,C,c_1,c_2>0$,
%depending on $n,\alpha,\Gamma, K$, such that 
for any $0<R_1<R_2<r_0/\lambda$,
\begin{align}
 \dashint_{\p B(x_0,R_2)} v^2 \leq C \left( \frac{R_2}{R_1} \right)^{c_1+c_2N_v(x_0,R_2)}    \dashint_{\p B(x_0, R_1)} v^2,   \label{H_doubling_v} \\
 \dashint_{ B(x_0,R_2)} v^2 \leq C \left( \frac{R_2}{R_1} \right)^{c_1+c_2N_v(x_0,R_2)} \dashint_{ B(x_0, R_1)} v^2.          \label{loc_doubling_v}
\end{align}
\end{prop}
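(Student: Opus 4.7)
The plan is to reduce everything to Theorem \ref{thm_doubling_gen} applied to the rescaled function $v_{x_0,\lambda}$ introduced in the paragraph preceding the statement, and then translate the conclusion back to $v$ via the scaling identities for $H$, $D$, $I$, $N$ that were recorded there.

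First I would verify that $v_{x_0,\lambda}$ satisfies an equation of the form \eqref{equation} on $B(0,r_1)$ to which Theorem \ref{thm_doubling_gen} applies, with constants $\alpha,\gamma,K$ depending only on $\Omega$ (in particular, \emph{independent of} $\lambda$). This uses three ingredients already established in Section \ref{reflection} and the rescaling paragraph: (i) the matrix $A$ is uniformly elliptic and Lipschitz on $D$ with constants depending only on $\Omega$, and these pass to $A_{x_0,\lambda}(y)=A(x_0+y/\lambda)$ (the ellipticity constant is preserved and the Lipschitz constant is multiplied by $\lambda^{-1}\leq 1$); (ii) the $L^\infty$-bounds \eqref{abc_infty} give $\|A_{x_0,\lambda}\|_\infty\leq C$, $\|b_{x_0,\lambda}\|_\infty\leq C$, $\|c_{x_0,\lambda}\|_\infty\leq C$ with $C=C(\Omega)$, since the original $b,c$ grow as $\lambda,\lambda^2$ and the rescaling gains the required $\lambda^{-1},\lambda^{-2}$; (iii) crucially, $A_{x_0,\lambda}(0)=A(x_0)=I$, because $x_0\in\overline{\Omega}\cap D=\tO\cup\pO$, a region where $A=I$ by \eqref{abc} and by the identity \eqref{identity} on $\pO$.

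With these verifications, Theorem \ref{thm_doubling_gen} provides constants $r_0'>0$ and $C,c_1,c_2>0$, all depending only on $\Omega$ and $r_1$, such that for every $0<\rho_1<\rho_2<r_0'$ (choosing $r_0'\leq r_1$),
\begin{equation*}
\dashint_{\partial B(0,\rho_2)} v_{x_0,\lambda}^2 \leq C\Bigl(\tfrac{\rho_2}{\rho_1}\Bigr)^{c_1+c_2 N_{v_{x_0,\lambda}}(0,\rho_2)} \dashint_{\partial B(0,\rho_1)} v_{x_0,\lambda}^2,
\end{equation*}
and similarly for the solid ball version. Next I would substitute $\rho_i=\lambda R_i$ and set $r_0:=r_0'$, so $\rho_i<r_0'$ becomes $R_i<r_0/\lambda$, while $\rho_2/\rho_1=R_2/R_1$. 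The scaling relations displayed just before the proposition yield
\begin{equation*}
\dashint_{\partial B(0,\rho)} v_{x_0,\lambda}^2 = \frac{\lambda^{n-1}H_v(x_0,\rho/\lambda)}{\omega_{n-1}\rho^{n-1}} = \dashint_{\partial B(x_0,\rho/\lambda)} v^2,
\end{equation*}
and $N_{v_{x_0,\lambda}}(0,\rho_2)=N_v(x_0,R_2)$. Plugging in gives \eqref{H_doubling_v} immediately; the solid-ball version \eqref{loc_doubling_v} follows either by the same scaling from the corresponding part of Theorem \ref{thm_doubling_gen}, or by integrating \eqref{H_doubling_v} in the radius and using Corollary \ref{cor_reverse_doubling_gen} in the manner already exhibited in the proofs of Corollary \ref{cor_doubling} and Theorem \ref{thm_doubling_gen}.

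There is no real obstacle here; the argument is essentially bookkeeping. The only point requiring a moment of care is tracking that all the constants coming out of Theorem \ref{thm_doubling_gen} truly depend only on $(\Omega,r_1)$ and not on $\lambda$, which is exactly the content of the uniform coefficient bounds for $v_{x_0,\lambda}$ described above (and which is the whole reason for passing to the rescaled function in the first place). Analogous applications of Theorem \ref{N_L_infty} and Lemma \ref{lemma_freq_from_doubling_gen} to $v_{x_0,\lambda}$ yield the other propositions asserted in this subsection without further modification.
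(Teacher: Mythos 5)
Your proof is correct and follows exactly the route the paper takes: the paper obtains this proposition (together with the neighboring ones) precisely by applying Theorem \ref{thm_doubling_gen} to the rescaled function $v_{x_0,\lambda}$, using the $\lambda$-uniform coefficient bounds and the fact that $A(x_0)=I$ on $\overline{\Omega}\cap D$, and then undoing the scaling via the identities for $H$, $D$, $I$, $N$ recorded just before the statement. The only addition you make is to spell out the bookkeeping the paper leaves implicit, which is fine.
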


\begin{lemma} \label{lemma_freq_from_doubling_v}
%Let $w\in W_{\loc}^{1,2}(B_1)$ be a nonzero solution to \eqref{equation} in $B_1$. 
Let $x_0\in \overline{\Omega}\cap D$, $B(x_0,r_1/\lambda)\subset D$.
Assume
\begin{equation} \label{assume_v}
\dashint_{B(x_0,\zeta r)} v^2 \geq \kappa \dashint_{B(x_0,r)} v^2
\end{equation}
for some $\kappa, \zeta \in (0,1)$ and $r\in (0,r_0/\lambda]$, where $r_0$ depends on $r_1$ and $\Omega$ %$n, \alpha, \Gamma, K$ 
and is chosen so that %the previous properties in this subsection 
Propositions \ref{N_L_infty_v} and \ref{thm_doubling_v}, and the analogies of
Proposition \ref{prop_H_prime_gen} and Corollary \ref{cor_reverse_doubling_gen}
hold.
Then there exists a constant $C_{\zeta}>0$ depending on $r_1$, $\Omega$ %$n$, $\alpha$, $\Gamma$, $K$ 
and $\zeta$ such that
\begin{equation} \label{freq_bound_v}
N_v(x_0,\zeta r) \leq C_{\zeta} (1- \log \kappa).
%\left(1+ \frac{- \log \left( {\kappa} (1-\gamma^n) \right)}{2 \log \left(\gamma/ a \right)} \right).
\end{equation}
In particular, for any $\beta< \zeta $, there exist constants $C_1$, $C_2$  depending on $r_1$, $\Omega$ %$n$, $\alpha$, $\Gamma$, $K$ 
and $\zeta$, $\beta$ such that
%holds
\begin{equation}\label{doubling_ab_v}
\dashint_{B(x_0,\beta r)} v^2 %\geq \kappa \left(\frac{\beta}{\alpha} \right)^{2N(\alpha r) +n}  
\geq  
%\left( \frac{\beta}{\alpha} \right)^n 
%\left( {\kappa (1-\gamma^n)}  \right)^{\frac{\log (a/\beta)}{\log (\gamma/a)}} 
\frac{1}{C_1} \kappa^{C_2}
\dashint_{B(x_0,\zeta r)} v^2.
\end{equation}
\end{lemma}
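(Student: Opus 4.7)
The plan is to reduce this statement to Lemma \ref{lemma_freq_from_doubling_gen} by the scaling introduced in Section \ref{frequency_for_v}. Set $v_{x_0,\lambda}(y) = v(x_0 + y/\lambda)$ on $B(0, r_1)$, which solves \eqref{equation_v_lambda}. The coefficients $A_{x_0,\lambda}, b_{x_0,\lambda}, c_{x_0,\lambda}$ satisfy the structural hypotheses \eqref{ellipticity}, \eqref{Lipschitz}, \eqref{L-infinity} with constants $\alpha, \gamma, K$ depending only on $\Omega$ (independent of $\lambda$, since the scaling absorbs the $\lambda$ and $\lambda^2$ growth in \eqref{abc_infty} into the factors $\lambda^{-1}$ and $\lambda^{-2}$). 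Moreover, $A_{x_0,\lambda}(0) = A(x_0) = I$ because $x_0 \in \overline{\Omega} \cap D$, so the principal part of the original equation \eqref{main_D} is the Laplacian at $x_0$.

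Under the change of variables $x = x_0 + y/\lambda$, the hypothesis \eqref{assume_v} transforms into
\begin{equation*}
\dashint_{B(0, \zeta \lambda r)} v_{x_0,\lambda}^2 \;\geq\; \kappa \dashint_{B(0, \lambda r)} v_{x_0,\lambda}^2.
\end{equation*}
Since $r \leq r_0/\lambda$, we have $\lambda r \leq r_0$, where $r_0$ is chosen small enough that the hypotheses of Lemma \ref{lemma_freq_from_doubling_gen} apply to $v_{x_0,\lambda}$ at the rescaled radius $\lambda r$. That lemma then yields
\begin{equation*}
N_{v_{x_0,\lambda}}(0, \zeta \lambda r) \;\leq\; C_\zeta \bigl(1 - \log \kappa\bigr),
\end{equation*}
with $C_\zeta$ depending only on $n, \alpha, \gamma, K, \zeta$, hence only on $\Omega$, $r_1$, $\zeta$. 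By the scaling identity $N_{v_{x_0,\lambda}}(0, \zeta \lambda r) = N_v(x_0, \zeta r)$ recorded in Section \ref{frequency_for_v}, this is precisely \eqref{freq_bound_v}.

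For the second assertion \eqref{doubling_ab_v}, I would apply the doubling condition \eqref{loc_doubling_v} of Proposition \ref{thm_doubling_v} to $v$ with radii $\beta r < \zeta r$, exactly as in the last display of the proof of Lemma \ref{lemma_freq_from_doubling_gen}:
\begin{equation*}
\dashint_{B(x_0,\zeta r)} v^2 \;\leq\; C \Bigl( \tfrac{\zeta}{\beta} \Bigr)^{c_1 + c_2 N_v(x_0, \zeta r)} \dashint_{B(x_0,\beta r)} v^2,
\end{equation*}
and then substitute the bound \eqref{freq_bound_v} for $N_v(x_0,\zeta r)$ to obtain a constant of the form $\frac{1}{C_1} \kappa^{C_2}$ with $C_1, C_2$ depending only on $\Omega, r_1, \zeta, \beta$. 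Alternatively, one can simply transcribe verbatim the proof of Lemma \ref{lemma_freq_from_doubling_gen} using the $v$-versions of its inputs (Propositions \ref{N_L_infty_v} and \ref{thm_doubling_v}, together with the analogues of \eqref{H_eq_gen} and Corollary \ref{cor_reverse_doubling_gen} referenced in the statement).

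No substantive obstacle arises: the entire content of the lemma is the observation that after the $\lambda$-scaling the equation for $v$ becomes uniformly elliptic with uniformly bounded coefficients, so the general elliptic machinery of Subsection \ref{sec_frequency_gen} applies without degeneration. The only point that requires care is to check the compatibility of the choice of $r_0$ in Lemma \ref{lemma_freq_from_doubling_gen} (for the equation \eqref{equation_v_lambda} on $B(0, r_1)$) with the one in the current statement (for $v$ on $B(x_0, r_1/\lambda)$): they are related by a factor of $\lambda$, which is why the hypothesis restricts $r$ to $(0, r_0/\lambda]$.
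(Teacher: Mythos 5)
Your proposal is correct and is exactly the paper's argument: the paper gives no separate proof of this lemma, stating only that it follows "immediately" by applying Lemma \ref{lemma_freq_from_doubling_gen} to the rescaled function $v_{x_0,\lambda}$ and translating back through the scaling identities of Section \ref{frequency_for_v}, which is precisely what you do. Your closing remark about matching the two choices of $r_0$ (related by the factor $\lambda$) is the right point of care and is consistent with the restriction $r\in(0,r_0/\lambda]$ in the statement.
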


%----------------------------------------------------------------------------------------------------------------------------
\subsection{Proof of Theorem \ref{thm_doubling}}

We will use an argument of the type as in Proposition \ref{chain_frequency} in a chain.
For that, we will need to start in a point where the frequency is reasonably bounded/where we have
a doubling condition with a reasonable constant. We will require even somewhat more: the integral
of $u^2$ over a small ball with center in this special starting point $y_* \in\pO$
will control the global integral of $u^2$. Recall %from section \ref{reflection} 
the Notation \ref{delta}: $\delta>0$ is 
a constant depending on $\Omega$ such that each point in $\tO=\{x\in\R^n| \dist(x,\partial\Omega) < \delta\} \cap\Omega$
has a single closest point on $\pO$. 
%Keep the notation $\Omega_{\rho}=\{x\in\R^n| \dist(x,\partial\Omega) < \rho\} \cap\Omega$ for any $\rho \leq \delta$.

\begin{lemma} \label{special_point}
For any $\rho< \delta$, there exists a point $y_*\in\pO$ such that for some constant $C$
depending only on $\Omega$, there holds
\begin{equation*}
 \int_{\Omega%_{2\rho}
} u^2 \leq C \rho^{-2n+1} \int_{B(y_*, \rho)\cap \Omega} u^2 .
\end{equation*}
\end{lemma}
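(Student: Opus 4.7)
Plan. My strategy is a three-step pigeonhole/covering argument on the boundary, combined with the Poisson representation of harmonic functions and a local trace inequality.

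\emph{Step 1: Global $L^2$ controlled by boundary $L^2$.} Since $u$ is harmonic, the Poisson representation $u(x)=\int_{\pO} P(x,y)\,u(y)\,d\sigma(y)$ combined with Jensen's inequality (the harmonic measure $P(x,\cdot)\,d\sigma$ is a probability measure on $\pO$ for each $x$) yields $u(x)^2\le \int_{\pO} P(x,y)\,u(y)^2\,d\sigma(y)$. Integrating over $x\in\Omega$ and exchanging the order of integration gives
\[
 \int_\Omega u^2\,dx \;\le\; \sup_{y\in\pO}\Bigl(\int_\Omega P(x,y)\,dx\Bigr)\int_{\pO} u^2\,d\sigma \;\le\; C_\Omega \int_{\pO} u^2\,d\sigma.
\]
The relevant supremum is finite and depends only on $\Omega$: indeed, $y\mapsto\int_\Omega P(x,y)\,dx$ equals $-\partial_\nu w(y)$ where $w$ solves $\Delta w=1$ in $\Omega$ with $w|_{\pO}=0$, and $w\in C^1(\overline{\Omega})$ by the $C^2$-regularity of $\pO$.

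\emph{Step 2: Boundary pigeonhole.} Cover $\pO$ by finitely many surface patches $\pO\cap B(y_i,\rho)$ with $y_i\in\pO$ and $N\le C_\Omega\,\rho^{-(n-1)}$, and let $y_*$ be a $y_i$ maximizing $\int_{\pO\cap B(y_i,\rho)}u^2$. Pigeonhole yields
\[
 \int_{\pO} u^2\,d\sigma \;\le\; N\int_{\pO\cap B(y_*,\rho)} u^2\,d\sigma \;\le\; C_\Omega\,\rho^{-(n-1)}\int_{\pO\cap B(y_*,\rho)} u^2\,d\sigma.
\]

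\emph{Step 3: From the boundary patch into the bulk.} For $y\in\pO\cap B(y_*,\rho)$ and $t\in(0,\rho)$, use the identity $u(y)^2-u(y-t\nu(y))^2 = -2\int_0^t u\,\nu(y)\cdot\nabla u\,ds$. Applying Cauchy--Schwarz in $s$, averaging in $t\in(\rho/2,\rho)$, integrating in $y$, and exploiting the $C^2$-bi-Lipschitz change of variables $(y,t)\mapsto y-t\nu(y)$ (which is valid precisely because $\rho<\delta$) produces the local trace inequality
\[
 \int_{\pO\cap B(y_*,\rho)}u^2\,d\sigma \;\le\; C_\Omega\Bigl(\rho^{-1}\int_{B(y_*,2\rho)\cap\Omega}u^2 + \rho\int_{B(y_*,2\rho)\cap\Omega}|\nabla u|^2\Bigr).
\]
I then bound the gradient term by boundary Caccioppoli for harmonic $u$ with $\partial_\nu u=\lambda u$ (multiplying $\Delta u=0$ by $u\phi^2$ for a cutoff $\phi$ supported in $B(y_*,4\rho)$ and integrating by parts), which produces a term $C\rho^{-2}\int_{B(y_*,4\rho)\cap\Omega}u^2$ plus a $\lambda$-dependent boundary contribution. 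The latter is absorbed either iteratively on nested balls or, crudely, by trading $\lambda$ against powers of $\rho^{-1}$ within the generous exponent $-(2n-1)$ of the target bound. The net outcome is $\int_{\pO\cap B(y_*,\rho)} u^2 \le C_\Omega\,\rho^{-n}\int_{B(y_*,C\rho)\cap\Omega}u^2$.

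Chaining Steps 1--3 and rescaling $\rho$ by a constant factor yields the claimed bound
$\int_\Omega u^2 \le C_\Omega\,\rho^{-(2n-1)}\int_{B(y_*,\rho)\cap\Omega}u^2$.

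\emph{Main obstacle.} The delicate point is Step 3: the Steklov boundary condition $\partial_\nu u=\lambda u$ inserts an unavoidable factor $\lambda$ into every Caccioppoli-type estimate, and producing a final constant depending only on $\Omega$ requires folding this $\lambda$ into the bound. The gap between the dimensionally natural exponent $-n$ and the claimed $-(2n-1)$ is precisely what creates room for this $\lambda$-absorption, via either iterative absorption on scales where $\rho\lambda$ is small or a crude monotone bound $\lambda \le C\rho^{-(n-1)}$ in the relevant regime.
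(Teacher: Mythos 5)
Your Steps 1 and 2 are fine (the harmonic-measure bound $\int_\Omega u^2\le C_\Omega\int_{\pO}u^2$ and the boundary pigeonhole are both correct), but Step 3 contains a genuine gap, and it is precisely the step the paper is structured to avoid. Because you pigeonhole over \emph{boundary} patches, you are forced to convert $\int_{\pO\cap B(y_*,\rho)}u^2$ into a solid integral, and the trace inequality you invoke requires $\int|\nabla u|^2$, which via Caccioppoli and the Steklov condition $\partial_\nu u=\lambda u$ reintroduces a term $\lambda\rho\int_{B(y_*,4\rho)\cap\pO}u^2$. This term is of the same type as the quantity being estimated but on a \emph{larger} ball, so it cannot be absorbed directly; an iteration over expanding balls is needed and is not carried out. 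More fundamentally, the lemma asserts a constant depending only on $\Omega$ for \emph{every} $\rho<\delta$, while $\lambda$ is a free parameter: your proposed trade-off ``$\lambda\le C\rho^{-(n-1)}$ in the relevant regime'' is simply false for, say, $\rho$ comparable to $\delta$ and $\lambda$ large, and the lemma is stated (and must be proved) in that generality. The ``generous'' exponent $-(2n-1)$ is not slack left over for absorbing $\lambda$; in the paper it is exactly accounted for by $\rho^{-(n-1)}$ (number of covering balls) times $\rho^{-n}$ (an interior sup-estimate), with no $\lambda$ anywhere.

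The paper's route sidesteps the issue by never touching the boundary condition. It covers the inner collar $\Omega_{\rho/2}$ by the \emph{solid} sets $B(y_i,\rho)\cap\Omega$ and pigeonholes there, so the distinguished integral is already over a solid region. It then controls the deep interior $\Omega\setminus\Omega_{\rho/4}$ by the interior estimate $\sup_{B(x,\rho/8)}u^2\le C\rho^{-n}\int_{B(x,\rho/4)}u^2$ applied at points of $\{\dist(\cdot,\pO)=\rho/4\}$ together with the maximum principle for the harmonic function $u$ on $\Omega\setminus\Omega_{\rho/4}$, yielding $\int_\Omega u^2\le C\rho^{-n}\int_{\Omega_{\rho/2}}u^2$. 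Only harmonicity is used, so the constant is automatically $\lambda$-independent. To repair your argument, replace Steps 1 and 3 by this interior-estimate-plus-maximum-principle control of the bulk, and run the pigeonhole over the solid collar pieces rather than over boundary patches.
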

\noindent In the proof we use this interior estimate for harmonic functions:
\begin{prop} \label{interior_harmonic}
Let $w$ be a harmonic function in $B_1$. Then  
\begin{equation*}
 \sup_{B_{1/2}} |w| \leq C \left(\int_{B_1} w^2 \right)^{1/2},
\end{equation*}
where $C$ is a constant depending only on $n$. 
\end{prop}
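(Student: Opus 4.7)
The plan is to prove this standard interior estimate using the mean value property for harmonic functions combined with Cauchy--Schwarz. First I would fix an arbitrary point $x \in B_{1/2}$ and observe that $B(x, 1/2) \subset B_1$. Since $w$ is harmonic on $B_1$, the mean value property applies on $B(x, 1/2)$, giving
\begin{equation*}
w(x) = \dashint_{B(x, 1/2)} w(y) \, dy.
\end{equation*}

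Next I would take absolute values and apply Cauchy--Schwarz (equivalently Jensen's inequality, since $t \mapsto t^2$ is convex):
\begin{equation*}
|w(x)| \leq \dashint_{B(x, 1/2)} |w(y)| \, dy \leq \left( \dashint_{B(x, 1/2)} w(y)^2 \, dy \right)^{1/2}.
\end{equation*}
Using $B(x,1/2) \subset B_1$, the right-hand side is bounded by
\begin{equation*}
\frac{1}{|B_{1/2}|^{1/2}} \left( \int_{B_1} w^2 \right)^{1/2} = C(n) \left( \int_{B_1} w^2 \right)^{1/2},
\end{equation*}
where $C(n) = |B_{1/2}|^{-1/2}$ depends only on the dimension $n$. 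Since $x \in B_{1/2}$ was arbitrary, taking the supremum over $x \in B_{1/2}$ yields the desired inequality.

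There is no real obstacle here: the mean value property is the essential ingredient, and the constant is explicit. One could alternatively invoke standard elliptic regularity (Theorem 8.17 in Gilbarg--Trudinger, say) for subsolutions $|w|^2$ of an elliptic equation, but the mean value route is the shortest and most self-contained. No harmonicity substitute is needed beyond the classical identity, and no smoothness beyond what is automatic for harmonic functions.
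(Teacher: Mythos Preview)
Your proof is correct and is the standard argument. The paper does not actually give a proof of this proposition; it simply cites Remark~1.19 in Han--Lin's lecture notes \cite{lin_skripta}, and your mean-value-plus-Cauchy--Schwarz argument is precisely the proof found there.
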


\begin{proof}
 This can be found e.g.~in \cite{lin_skripta}, Remark 1.19.
\end{proof}

\begin{proof}[Proof of Lemma \ref{special_point}]
Choose a $\rho/2$-net of points $y_1,y_2,\dots,y_m\in\pO$, i.e.~for each
$y\in\pO$ there exists $1\leq i\leq m$ such that $|y-y_i|\leq\rho/2$.
We can always make $m\leq C \rho^{-(n-1)}$, where $C$ depends only on $\Omega$.
Then the balls $\{B(y_i,\rho) \}_{i=1}^m$ cover $\Omega_{\rho/2}$,
and therefore there exists $y_*\in\{y_1,y_2,\dots, y_m\}$ such that
\begin{equation} \label{special_ball}
 \int_{\Omega_{\rho/2}} u^2 \leq C \rho^{-(n-1)} \int_{B(y_*, \rho)\cap \Omega} u^2.
\end{equation}
Now we just need to bound $\int_{\Omega%_{2\rho}
} u^2$ in terms of $\int_{\Omega_{\rho/2}} u^2$.
Let 
\begin{equation*}
%\widetilde{\Omega_{\frac{\rho}{4}}}
\rhoO=\left\{x\in\Omega: \dist(x,\pO)\geq \frac{\rho}{4} \right\} = \Omega\setminus \Omega_{\frac{\rho}{4}}.
\end{equation*}
Then $\prhoO= \left\{x\in\Omega: \dist(x,\pO)= \rho/4 \right\}$ and
for any $x\in \prhoO$, by the scaling of the interior estimate in Proposition \ref{interior_harmonic}
we obtain
\begin{equation*}
 \sup_{B(x,\rho/8)} u^2 \leq C \rho^{-n} \int_{B(x,\rho/4)} u^2 \leq C \rho^{-n} \int_{\Omega_{\rho/2}} u^2.
\end{equation*}
Hence, by the maximum principle for the harmonic function $u$ inside $\rhoO$,
\begin{align*}
 \sup_{\rhoO} u ^2 &\leq \sup_{\prhoO} u ^2 \leq \sup_{x\in \prhoO} \sup_{B(x,\rho/8)} u ^2 \leq C \rho^{-n} \int_{\Omega_{\rho/2}} u^2, \\
\int_{\Omega%_{2\rho}
} u^2 &= \int_{\Omega%_{2\rho}
\setminus \Omega_{\rho/2}} u^2 + \int_{\Omega_{\rho/2}} u^2 \leq
C %\rho 
\sup_{\rhoO} u ^2 + \int_{\Omega_{\rho/2}} u^2  \leq C' \rho^{-n} \int_{\Omega_{\rho/2}} u^2.
\end{align*}
Combining this with \eqref{special_ball}, we obtain the desired estimate.
\end{proof}

%We will use the following 3 lemmas as the key steps in the chain argument propagating the doubling condition
%inside of $\Omega$.

%\begin{proof}[Proof of Theorem \ref{thm_doubling}]
\begin{proof}[Proof of Theorem \ref{thm_doubling}]
Fix $r_1:= \delta \lambda_1$, where $\lambda_1$ is the smallest positive Steklov eigenvalue for $\Omega$. Then
$B(x_0, r_1/\lambda) \subset D$ for every $x_0\in \pO$ and every Steklov eigenvalue $\lambda$. Fix $r_0<r_1$
so that Propositions \ref{N_L_infty_v} and \ref{thm_doubling_v} and Lemma \ref{lemma_freq_from_doubling_v}
hold with this $r_0$. Note that $r_1$ and $r_0$ depend only on $\Omega$.

To prove the theorem, it is enough to prove that there exists a constant 
$C$ depending only on $\Omega$ and $n$ such that for every $y\in\pO$
\begin{equation} \label{doubling_v_r_0}
\int_{B(y,r_0/\lambda)}  v^2 \leq  2^{C\lambda^5} \int_{B(y,r_0/2\lambda)}  v^2.
\end{equation}
Indeed, this will prove \eqref{doubling_v} for $r=r_0/2\lambda$. Once we know it for this $r$, from
Lemma \ref{lemma_freq_from_doubling_v} we obtain a bound for the frequency $N(r_0/2\lambda)\leq C \lambda^5$,
and from Propositions \ref{N_L_infty_v} and \ref{thm_doubling_v} we obtain \eqref{doubling_v} for any $r\leq r_0/2\lambda$.

The rest of the proof is organized as follows. First, we prove a doubling condition for $v$ in a special point $y=y_*\in\pO$,
move to a close-by point inside $\Omega$, and switch to $u$. As the next step, we show a doubling condition for $u$ in a point
near $y_*$ for a ball with fixed radius (independent of $\lambda$), and propagate the doubling condition estimate
through $\Omega$ into a neighborhood of any other given point $y_0\in\pO$. In step 3, we gradually pass from the ball with fixed radius
to a small ball in  $\sim 1/\lambda$-neighborhood of $\pO$, still for $u$ and within $\Omega$.
In the last step, we switch back to $v$ and deduce the doubling condition \eqref{doubling_v_r_0}
in the point $y_0\in\pO$.

\noindent
{\bf Step 1:}

%\noindent
We start in a point $y=y_*\in\pO$, which we obtain from Lemma \ref{special_point}
for $\rho= r_0/2\lambda$, i.e.~there holds
\begin{equation*}
 \int_{\Omega%_{2\rho}
} u^2 \leq C \left(\frac{r_0}{2\lambda}\right)^{-2n+1} \int_{B(y_*, r_0/2\lambda)\cap \Omega} u^2.
\end{equation*}
In $\Omega$, we have $|v(x)|=|u(x)e^{\lambda d(x)}| \geq |u(x)|$. Hence
\begin{equation} \label{special_point_v}
  \int_{B(y_*, r_0/2\lambda)} v^2 \geq
 \int_{B(y_*, r_0/2\lambda)\cap \Omega} u^2
\geq \frac{1}{C} \left( \frac{r_0}{2\lambda} \right)^{2n-1}  \int_{\Omega} u^2.
\end{equation}
On the other hand, we show that the integral $\int_{B(y,r_0/\lambda)} v^2$
is controlled by $\int_{\Omega} u^2$ for any $y\in \pO$. Fix $y\in \pO$.
Notice that $r_0/\lambda < \delta$. Recall that the reflection map $\Psi$ introduced in Subsection \ref{reflection}
maps any $x\in \Omega_{\delta}$, written as $x=z+t\nu(z)$, where $z\in\pO$, into $\Psi(x)=z-t\nu(z)$,
and this is a $1$-to-$1$ map from $\Omega_{\delta}$ onto $\Omega_{\delta}'=D\setminus \overline{\Omega}$.
Since $B(y, r_0/\lambda)\setminus \overline{\Omega} \subset \Omega_{r_0/\lambda}' \subset \Omega_{\delta}'$, the 
map $\Psi^{-1}$ maps $B(y, r_0/\lambda)\setminus \overline{\Omega} $ onto a subset of $\Omega_{r_0/\lambda}$,
and by change of variables, using the boundedness of the Jacobian of $\Psi^{-1}$ (the bound depends
only on $\Omega$), we obtain
\begin{equation} \label{hviezdicka}
 \int_{B(y, r_0/\lambda)\setminus \overline{\Omega}} v^2 \leq C \int_{\Omega_{r_0/\lambda}} v^2.
\end{equation}
On $\Omega_{r_0/\lambda}$ we have 
\begin{equation} \label{v_u_relationship}
|v(x)|=|u(x) e^{\lambda d(x)}| \leq |u(x)|e^{r_0} \leq C |u(x)|,
\end{equation}
with $C$ depending only on $\Omega$. Hence, using \eqref{hviezdicka} and \eqref{v_u_relationship},
\begin{equation} \label{any_point_v}
 \int_{B(y,r_0/\lambda)} v^2 %= \int_{B(y,r_0/\lambda) \cap \Omega} v^2 + \int_{B(y,r_0/\lambda) \setminus \overline{\Omega}} v^2 
\leq C \int_{\Omega_{r_0/\lambda}} v^2 
 \leq C' \int_{\Omega_{r_0/\lambda}} u^2 \leq C' \int_{\Omega} u^2.
\end{equation}
Now we just need to propagate the estimate \eqref{special_point_v} into any point $y\in\pO$ -- combined with
\eqref{any_point_v}, we will obtain a doubling condition for $v$.

Using \eqref{special_point_v} and \eqref{any_point_v} in $y=y_*$, we know that
\begin{equation*}
  \int_{B(y_*,r_0/2\lambda)} v^2 \geq \frac{1}{C} \frac{1}{\lambda^{2n-1}} \int_{B(y_*,r_0/\lambda)} v^2.
\end{equation*}
Hence, from Lemma \ref{lemma_freq_from_doubling_v} used for $r=\frac{r_0}{\lambda}$, $\zeta=\frac{1}{2}$, $\kappa= \frac{1}{C} \lambda^{-2n+1}$, $\beta = \frac{1}{4}$, we get
\begin{equation} \label{doubling_r_4}
 \int_{B(y_*,r_0/4 \lambda)} v^2 \geq \frac{1}{C_1} \frac{1}{\lambda^{C_2}} \int_{B(y_*,r_0/2\lambda)} v^2,
\end{equation}
where $C_1, C_2$ depend only on $\Omega$. 

Now fix $y=y_0\in \pO$ in which we want to prove \eqref{doubling_v_r_0}.
Recalling Notation \ref{delta}, we know that there exist points $z_1, z_2\in \Omega$ such that
$\dist(z_1, y_*)=\dist(z_2,y_0)=\delta$, $B(z_1,\delta),B(z_2, \delta) \subset \Omega$, and there is
a curve $\Gamma$ in $\Omega$ with endpoints $z_1, z_2$ such that if we 
look at $\Gamma$ as a set of points
in $\Omega$, then $\{x\in \R^n: \dist(x,\Gamma)<\delta \}\subset \Omega$. 
We will propagate the doubling condition estimate along this curve. First choose $x_1$
on the segment $y_*z_1$ such that $\dist(y_*,x_1)= r_0/4\lambda$. Then
$B(x_1,r_0/4\lambda)$ lies inside $\Omega$ and touches $\pO$ in $y_*$.
We also have 
$$B(y_*,r_0/4\lambda) \subset B(x_1,r_0/2\lambda) \subset B(x_1,3r_0/4\lambda) \subset B(y_*,r_0/\lambda).$$
Hence, using \eqref{doubling_r_4} and \eqref{special_point_v},
\begin{equation} \label{x_1_r_0_2}
%\begin{aligned}
\int_{B(x_1,r_0/2\lambda)} v^2 %&
\geq  \int_{B(y_*,r_0/4\lambda)} v^2 
\geq\frac{1}{C_1'} \frac{1}{\lambda^{C_2'}} \int_{B(y_*,r_0/2\lambda)} v^2 %\\ &
\geq \frac{1}{C_1}  \frac{1}{\lambda^{C_2}}  \int_{\Omega} u^2.
%\end{aligned}
\end{equation}
On the other hand, from $B(x_1,3r_0/4\lambda) \subset B(y_*,r_0/\lambda)$ and \eqref{any_point_v} (used for $y=y_*$), we obtain
\begin{equation*}
 \int_{ B(x_1,3r_0/4\lambda)} v^2 \leq \int_{B(y_*,r_0/\lambda)} v^2 \leq C \int_{\Omega} u^2,
\end{equation*}
so together with \eqref{x_1_r_0_2} we have
\begin{equation*}
 \int_{B(x_1,r_0/2\lambda)} v^2 \geq \frac{1}{C_1}  \frac{1}{\lambda^{C_2}} \int_{ B(x_1,3r_0/4\lambda)} v^2.
\end{equation*}
Hence it follows from Lemma \ref{lemma_freq_from_doubling_v} used in the center $x_1$ for $r=3r_0/4\lambda$,
$\zeta=2/3$, $\kappa =\frac{1}{C_1}  \frac{1}{\lambda^{C_2}}$, $\beta = 1/6$, that
\begin{equation*} 
 \int_{B(x_1,r_0/8\lambda)} v^2 \geq \frac{1}{\widetilde{C_1}}  \frac{1}{\lambda^{\widetilde{C_2}}} \int_{B(x_1,r_0/2\lambda)} v^2 \geq \frac{1}{C_1}  \frac{1}{\lambda^{C_2}}  \int_{\Omega} u^2,
\end{equation*}
where in the last inequality we used \eqref{x_1_r_0_2}. Notice that starting from the estimate \eqref{special_point_v} for a ball with
center $y_*\in\pO$, we moved to an estimate for a ball $B(x_1,r_0/8\lambda)$ inside $\Omega$. Since $B(x_1,r_0/8\lambda)\subset \Omega_{r_0/\lambda}$,
using \eqref{v_u_relationship} we can move from $v$ towards the harmonic function $u$:
\begin{equation} \label{doubling_x_1}
  \int_{B(x_1,r_0/8\lambda)} u^2 \geq \frac{1}{C} \int_{B(x_1,r_0/8\lambda)} v^2 \geq \frac{1}{C_1}  \frac{1}{\lambda^{C_2}}  \int_{\Omega} u^2.
\end{equation}

\noindent
{\bf Step 2:}

Now we are ready to move towards balls with fixed radii which will not depend on $\lambda$. Take the point $z_1$ for which
$B(z_1,\delta)$ lies inside $\Omega$ and touches $\pO$ in $y_*$. Since $x_1$ lies on the segment $y_*z_1$ in distance $r_0/4\lambda$ from $y_*$, we have
$$
B(x_1,r_0/8\lambda) \subset B(z_1,\delta - r_0/8\lambda) \subset B(z_1,\delta) \subset \Omega.
$$
Hence, by \eqref{doubling_x_1} we obtain
\begin{equation} \label{doubling_z_1_a}
 \int_{B(z_1,\delta - r_0/8\lambda)} u^2 \geq \int_{B(x_1,r_0/8\lambda)} u^2 \geq \frac{1}{C_1}  \frac{1}{\lambda^{C_2}}  \int_{\Omega} u^2
\geq \frac{1}{C_1}  \frac{1}{\lambda^{C_2}}  \int_{B(z_1,\delta)} u^2.
\end{equation}
Now we can use Lemma \ref{lemma_freq_from_doubling} for the harmonic function $u$ on the ball $B(z_1,\delta)$ and $\alpha=1-\frac{r_0}{8\lambda \delta}$,
$\theta=1 -\frac{r_0}{16\lambda \delta} $, $\kappa = \frac{1}{C_1}  \frac{1}{\lambda^{C_2}} $, $\beta = 1/2$. For this choice, we have
\begin{align*}
 %\theta^n = \left( 1 -\frac{r_0}{16\lambda \delta} \right)^n \leq 1 -\frac{r_0}{16\lambda \delta} = 1- \frac{1}{C\lambda}, \\
 1 -\theta^n \geq \frac{1}{C\lambda}, \\
%\frac{\theta}{\alpha} = \frac{1 -\frac{r_0}{16\lambda \delta}}{1-\frac{r_0}{8\lambda \delta}} \geq 1+\frac{r_0}{16\lambda \delta} > e^{\frac{r_0}{32\lambda \delta}},\\
\log ({\theta}/{\alpha}) \geq \frac{1}{C\lambda}, \\
\log (\alpha/\beta) \leq \log 2,
\end{align*}
and Lemma \ref{lemma_freq_from_doubling} together with \eqref{doubling_z_1_a} yield
\begin{equation}\label{doubling_z_1}
\int_{B(z_1,\delta/2)} u^2 \geq \left (\frac{1}{C_1}  \frac{1}{\lambda^{C_2}}\right)^{\widetilde{C}\lambda} \int_{B(z_1,\delta - r_0/8\lambda)} u^2 \geq
\left(\frac{1}{2} \right)^{C\lambda \log \lambda} \int_{\Omega} u^2.
\end{equation}

Now we propagate the estimate \eqref{doubling_z_1} along $\Gamma$ into the point $z_2$.
We choose points 
$z^1=z_1, z^2, \dots, z^k=z_2$ on $\Gamma$ such that 
\begin{equation*}
\dist (z^i,z^{i+1})\leq \delta/4 \ \ \ \text{ for } i=1,2,\dots,k-1. 
\end{equation*}
Notice that $k$ is bounded by a constant depending only on $\Omega$. %Using induction, 
Assume
\begin{equation} \label{doubling_induction}
 \int_{B(z^i,\delta/2)} u^2 \geq \left(\frac{1}{2} \right)^{C_i\lambda \log \lambda} \int_{\Omega} u^2
\end{equation}
for some constant $C_i$ depending only on $\Omega$ -- for $i=1$, this is \eqref{doubling_z_1}.
Then
%\begin{equation*}
% \int_{B(z^i,\delta/2)} u^2 \geq \left(\frac{1}{2} \right)^{C_i\lambda \log \lambda} \int_{B(z^i,\delta)} u^2
%\end{equation*}
the inclusion $B(z^{i+1}, \delta/2) \supset B(z^i, \delta/4)$, Lemma \ref{lemma_freq_from_doubling} 
and \eqref{doubling_induction} imply
\begin{equation*}
\begin{aligned}
 \int_{B(z^{i+1},\delta/2)} u^2 &\geq \int_{B(z^i,\delta/4)} u^2  \\
&\geq \left(\frac{1}{2} \right)^{C\lambda \log \lambda}  \int_{B(z^i,\delta/2)} u^2 
\geq \left(\frac{1}{2} \right)^{C_{i+1}\lambda \log \lambda} \int_{\Omega} u^2
\end{aligned}
\end{equation*}
(here $C$ comes from Lemma \ref{lemma_freq_from_doubling} and depends on $C_i$), which is \eqref{doubling_induction} for $i+1$ instead of $i$.
After $k-1$ steps, the constant $C_k$ will be bounded by a constant depending only on $\Omega$, and we obtain
\begin{equation}\label{doubling_z_2}
\int_{B(z_2,\delta/2)} u^2 \geq 
\left(\frac{1}{2} \right)^{C\lambda \log \lambda} \int_{\Omega} u^2.
\end{equation}

\noindent
{\bf Step 3:}

Now we need to get from the ball $B(z_2,\delta/2)$ closer to the boundary, where we will be able to switch
back to the function $v$. In each step, we multiply the distance from $y_0$ by a factor of $5/6$.
Hence, we will need $\sim \log \lambda$ steps to get to a neighborhood of $\pO$ of order $1/\lambda$.

Recall that $B(z_2,\delta)$ lies inside $\Omega$ and touches $\pO$ in $y_0$. Consider the sequence
$x^0, x^1, \dots, x^l$ of points on the segment $z_2y_0$ such that 
$\dist(x^i,y_0) = \delta\left(\frac{5}{6}\right)^{i}$
(i.e.~$x^0=z_2$) and $l$ is chosen to be the smallest integer such that $\dist(x^l,y_0)\leq \frac{r_0}{4\lambda}$, i.e.
\begin{equation*} 
l= \frac{\log \lambda}{\log 6/5} + C,
\end{equation*}
where $\frac{\log 4\delta - \log r_0}{\log 6/5} \leq C < \frac{\log 4\delta - \log r_0}{\log 6/5} +1$.
Denote $r^i:=\delta \left(\frac{5}{6}\right)^i $, so that the ball $B(x^i, r^i)$ lies in $\Omega$
and touches $\pO$ in $y_0$.

By induction, we will show that
\begin{equation} \label{doubling_x^i}
 \int_{B(x^i,r^i/6 ) } u^2 \geq 
\left(\frac{1}{2} \right)^{ C 2^i \lambda \log \lambda } \tau^{2^i-1}
\int_{\Omega} u^2,
\end{equation}
where $\tau=\left(\frac{25}{24}\right)^n -1$. For $i=0$, this follows from \eqref{doubling_z_2}
and Lemma \ref{lemma_freq_from_doubling} used for $r=\delta$, $\alpha=1/2$, $\beta=1/6$.

Now assume that \eqref{doubling_x^i} holds for some $0\leq i \leq l-1$.
%$i\in\{0,1,\dots,l-1\}$. %, and we will show it for $i+1$. 
Since $\dist(x^i,x^{i+1})= r^i/6$, we have $B(x^{i+1},2r^{i+1}/5) = B(x^{i+1}, r^i/3)\supset B(x^i, r^i/6)$.
On the other hand, we have $B(x^{i+1}, r^{i+1})\subset \Omega$. Hence, \eqref{doubling_x^i} implies
\begin{equation} \label{doubling_x^i+1}
\begin{aligned}
  \int_{B(x^{i+1},2r^{i+1}/5 ) } u^2 &\geq  \int_{B(x^i,r^i/6 ) } u^2 \geq 
   \left(\frac{1}{2} \right)^{ C 2^i \lambda \log \lambda } \tau^{2^i-1} \int_{\Omega} u^2 \\
  &\geq \left(\frac{1}{2} \right)^{ C 2^i \lambda \log \lambda } \tau^{2^i-1} \int_{B(x^{i+1},r^{i+1} ) } u^2.
\end{aligned}
\end{equation}
Apply Lemma \ref{lemma_freq_from_doubling} in point $x^{i+1}$ for $r=r^{i+1}$ and 
\begin{equation*}
\alpha= 2/5,\ \ \ \   \beta =1/6,\ \ \ \  \theta = 24/25, \ \ \ \ \kappa = \left({5}/{2}\right)^n  \left({1}/{2} \right)^{ C 2^i \lambda \log \lambda } \tau^{2^i-1}.
\end{equation*}
This is chosen so that the exponent in \eqref{doubling_ab} is one:
%\begin{equation*}
$\frac{\log (\alpha/\beta)}{\log(\theta/\alpha)} =1. $
%\end{equation*}
%and our computation becomes much easier. 
We obtain
\begin{equation*}
\begin{aligned}
 &\int_{B(x^{i+1},r^{i+1}/6 ) } u^2  \\
&\geq \frac{(1/6)^n}{(2/5)^n}
 \left( \left(\frac{5}{2}\right)^n \left(\frac{1}{2} \right)^{ C 2^i \lambda \log \lambda } \tau^{2^i-1}\right) \left(1-\left( \frac{24}{25}\right)^n\right)
\int_{B(x^{i+1},2r^{i+1}/5 ) } u^2  \\
&\geq \left(\frac{1}{2} \right)^{ C 2^{i+1} \lambda \log \lambda  }\tau^{2^{i+1}-1} \int_{B(x^{i+1},r^{i+1} ) } u^2
\end{aligned}
\end{equation*}
(the last inequality follows from \eqref{doubling_x^i+1}), which is \eqref{doubling_x^i} for $i+1$.

Using \eqref{doubling_x^i} for $i=l$, we obtain
\begin{equation} \label{doubling_x^l}
%\begin{aligned}
 \int_{B(x^l,r^l/6 ) } u^2 %&\geq 
%\left(\frac{1}{2} \right)^{ C 2^l \lambda \log \lambda } \tau^{2^l-1}
%\int_{\Omega} u^2 \\
\geq \left(\frac{1}{2} \right)^{ C  \lambda^{1+\frac{\log 2}{\log (6/5)}} \log \lambda } \int_{\Omega} u^2 %\\
\geq \left(\frac{1}{2} \right)^{ C  \lambda^5 } \int_{\Omega} u^2.
%\end{aligned}
\end{equation}

{\bf Step 4:}
Recall that $l$ was chosen so that $r^l=\dist(x^l, y_0)\leq \frac{r_0}{4\lambda}$. Hence,
$B(y_0,r_0/2\lambda)\cap \Omega \supset B(x^l,r^l/6)$, and using that $|v(x)|\geq |u(x)|$ in $\Omega_{\delta}$
and \eqref{doubling_x^l}, we obtain
\begin{equation*}
  \int_{B(y_0,r_0/2\lambda ) } v^2 \geq \int_{B(y_0,r_0/2\lambda)\cap \Omega } u^2 \geq
 \int_{B(x^l,r^l/6 ) } u^2 \geq 
 \left(\frac{1}{2} \right)^{ C  \lambda^5 } \int_{\Omega} u^2.
\end{equation*}
Together with \eqref{any_point_v} used for $y=y_0$ which bounds $\int_{B(y_0,r_0/\lambda)} v^2$
by a constant times $\int_{\Omega} u^2$, we obtain the desired doubling condition \eqref{doubling_v_r_0}
in $y=y_0$, what we wanted.
\end{proof}

\begin{remark}
In Step 3, instead of our explicit choice for decreasing the distance to the boundary by the multiplicative
factor of $5/6$, the radius of the smaller ball in control being $1/6$ of the distance to the boundary,
and the auxiliary constant $\theta = 24/25$, we can do the calculation with general constants. 
While the computation becomes more complicated, we do not gain too much: instead of our
exponent $1+ \frac{\log 2}{\log (6/5)} \approx 4.8$, a numerical calculation suggests that
the optimal choice of the constants gives an exponent of roughly $4.6$.
\end{remark}

% reducing dim of nodal set
%-------------------------------------------------------------------------------------
\subsection{Doubling condition on $\pO$} \label{sec-doubling-on-bdry}
 
In this section, we will use Theorem \ref{thm_doubling} to prove Theorem \ref{thm_doubling_bdry}. %: 
%
%{
%\renewcommand{\thethm}{\ref{thm_doubling_bdry}}
%\begin{thm} 
%There exist constants $r_0, C$ depending only on $\Omega$ and $n$ such that 
%for any $r\leq r_0/\lambda$ and $x\in \pO$,
%\begin{equation} \tag{\ref{doubling_bdry}}
%\int_{B(x,2r) \cap \pO}  u^2 \leq  2^{C\lambda^5} \int_{ B(x,r) \cap \pO}  u^2.
%\end{equation}
%\end{thm}
%}
%
%\noindent
As a connection between the boundary integrals and integrals over solid balls,
we will use a quantitative Cauchy uniqueness theorem
which was proved in \cite{lin_nodal}. %as {\it Lemma 4.3}. 
In \cite{lin_nodal} it is formulated for solutions of general elliptic
equations under the same assumptions as used in Subsection \ref{sec_frequency_gen}:
\begin{comment}
consider %an open connected set $O$ in $\R^n$ which contains the unit ball
%$B_1$ with the center at origin, and 
the equation

\begin{equation} \label{general_equation}
 \sum_{i,j=1}^n (a^{ij}(x)w_{x_i})_{x_j} +\sum_{i=1}^n b^i(x) w_{x_i} +c(x) w =0
\end{equation}
in $B_1(0)\subset\R^n$ with coefficients satisfying the following assumptions:
\begin{equation} \label{coef_bounds_for_w}
\begin{aligned}
 &\text{(i)} \quad a^{ij}(x)\xi_i\xi_j \geq \alpha |\xi|^2, \quad\quad \forall\xi\in\R^n,\  x\in B_1(0) %\quad \text{and} \quad \mu>0
;\\
&\text{(ii)} \quad \sum_{i,j} |a^{ij}(x)| + \sum_i |b^i(x)| +|c(x)| \leq K, \quad\quad  \forall x\in B_1(0); \\
&\text{(iii)} \quad \sum_{i,j} |a^{ij}(x) - a^{ij}(y)|\leq \gamma |x-y|, \quad\quad  \forall x,y\in B_1(0);
\end{aligned}
\end{equation}
for some positive constants $\alpha, K, \gamma$.
\end{comment}

\begin{lemma}[\cite{lin_nodal}, Lemma 4.3] \label{quant_cauchy}
 Let $w$ be a solution of \eqref{equation} in $B_1^+\subset \R^n$ with 
the coefficients of the equation satisfying \eqref{ellipticity}-\eqref{L-infinity} and
$||w||_{L^2(B_1^+)}\leq 1$.
Suppose that 
\begin{equation*}
 ||w||_{H^1(\Gamma)} + ||(\p w/\p x_n)||_{L^2(\Gamma)}\leq \epsilon \ll 1,
\end{equation*}
where $\Gamma=\{(x',0)\in\R^n : |x'|<3/4\}$. Then
$||w||_{L^2(B_{1/2}^+)}\leq C \epsilon^{\beta}$ for some positive constants $C,\beta$ which
depend only on $n, \alpha, K, \gamma$.
\end{lemma}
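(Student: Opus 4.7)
The plan is to derive the conclusion from a Carleman estimate for $L$. The essential mechanism is that an exponential weight which is maximal on $\Gamma$ and strictly smaller on $B_{1/2}^+$ transports the $\epsilon$-smallness of the Cauchy data on $\Gamma$ into a polynomial-in-$\epsilon$ bound on $\|w\|_{L^2(B_{1/2}^+)}$; optimizing the Carleman parameter at the end yields exactly the power $\epsilon^\beta$.

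First I would localize. Let $\chi$ be a smooth cutoff with $\chi\equiv 1$ on $B_{1/2}^+$, supported in $B_{4/5}^+$ and vanishing near $\partial B_{4/5}\cap B_1^+$. Caccioppoli applied to $Lw=0$ gives $\|w\|_{H^1(B_{4/5}^+)}\leq C\|w\|_{L^2(B_1^+)}\leq C$, so cross terms generated by the cutoff are controlled. Next I would pick a Carleman weight $\phi$ that is pseudo-convex for $L$ and has the three-level geometry $\phi\equiv M$ on $\Gamma\cap B_{4/5}$, $\phi\geq m$ on $B_{1/2}^+$, and $\phi\leq L_0<m$ on $\mathrm{supp}(\nabla\chi)\cap B_1^+$. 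A convenient model is $\phi(x)=\psi(|x-x_*|)$ for a decreasing radial profile $\psi$ and a point $x_*$ slightly below $\Gamma$; pseudo-convexity for $L$ follows from Hörmander's construction after a standard mollification of the Lipschitz coefficients.

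Setting $W=\chi w$ and applying the Carleman estimate yields, for $\tau\geq\tau_0$,
\begin{equation*}
\tau\int_{B_1^+} e^{2\tau\phi}|W|^2\,dx \leq C\int_{B_1^+} e^{2\tau\phi}|LW|^2\,dx + Ce^{2\tau M}\bigl(\|w\|_{H^1(\Gamma)}^2+\|\partial_{x_n} w\|_{L^2(\Gamma)}^2\bigr).
\end{equation*}
Because $Lw=0$, the inhomogeneity $LW$ is supported in $\mathrm{supp}(\nabla\chi)$, where $\phi\leq L_0$; restricting the left-hand side to $B_{1/2}^+$ (where $W=w$ and $\phi\geq m$) and using the $H^1$ bound from Caccioppoli produces
\begin{equation*}
e^{2\tau m}\|w\|_{L^2(B_{1/2}^+)}^2 \leq Ce^{2\tau L_0} + Ce^{2\tau M}\epsilon^2.
\end{equation*}
Balancing the two terms on the right by choosing $\tau=c\log(1/\epsilon)$ yields $\|w\|_{L^2(B_{1/2}^+)}\leq C\epsilon^\beta$ with $\beta=(m-L_0)/(M-L_0)\in(0,1)$, depending only on $n,\alpha,\gamma,K$.

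The main obstacle is to construct $\phi$ so that pseudo-convexity for $L$ and the ordering $L_0<m<M$ on the three geometric regions hold simultaneously, and to verify that the boundary integrals arising from the Carleman integration by parts on $\Gamma$ collapse into precisely the stated combination $\|w\|_{H^1(\Gamma)}^2+\|\partial_{x_n} w\|_{L^2(\Gamma)}^2$ (with polynomial factors in $\tau$ absorbed into $e^{2\tau M}$). The merely Lipschitz, rather than smooth, regularity of the leading coefficients $A$ in \eqref{Lipschitz} makes a direct appeal to Hörmander delicate; the usual workaround is to mollify $A$, treat the mollification error as a lower-order perturbation absorbed by the large parameter $\tau$, and then pass to the limit.
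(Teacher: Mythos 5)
The paper does not prove this lemma; it cites it from \cite{lin_nodal}, where the argument is not a Carleman estimate but an extension across $\Gamma$ (one corrects $w$ by a function built from the small Cauchy data so as to obtain a genuine solution in a full ball that is small in the lower half-ball) followed by the frequency-function/three-sphere machinery that the rest of the paper is built on. Your Carleman route is a legitimate alternative in principle, but as written it has a genuine gap.

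The gap is geometric and sits at the heart of the scheme: the three-level configuration $\phi\geq m$ on $B_{1/2}^+$ and $\phi\leq L_0<m$ on $\mathrm{supp}(\nabla\chi)$ is impossible. Since $\chi\equiv 1$ on $B_{1/2}^+$ and $\chi$ is supported in $B_{4/5}^+$, the set $\mathrm{supp}(\nabla\chi)$ is contained in $\overline{B_{4/5}^+}\setminus B_{1/2}^+$, whose closure meets $\overline{B_{1/2}^+}$ along $\partial B_{1/2}\cap\{x_n>0\}$. A continuous weight cannot satisfy $\phi\geq m$ on one of these sets and $\phi\leq L_0<m$ on the other. Concretely, with your radial weight centered at $x_*$ just below $\Gamma$, points of $\mathrm{supp}(\nabla\chi)$ just outside $\partial B_{1/2}$ near $\Gamma$ are closer to $x_*$ than points of $B_{1/2}^+$ near the top of $\partial B_{1/2}$, so the required ordering fails. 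A single Carleman application can only yield smallness on a region whose closure is disjoint from $\mathrm{supp}(\nabla\chi)$ --- typically a thin lens adjacent to $\Gamma$. To reach all of $B_{1/2}^+$ you must add a propagation step: cover a neighborhood of $\Gamma\cap\overline{B_{1/2}}$ in $B^+$ by such lenses, and then carry the smallness to the rest of $B_{1/2}^+$ (which lies at a fixed positive distance from $\partial B_1^+$) by a chain of interior three-sphere inequalities, at the cost of replacing $\beta$ by a smaller positive exponent. Without that step the claimed conclusion on $B_{1/2}^+$ does not follow from the displayed inequality. The remaining issues you flag (Lipschitz leading coefficients, polynomial factors of $\tau$ in the boundary terms) are indeed handleable by the standard devices you mention.
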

Notice that Lemma \ref{quant_cauchy} relates the $H^1$-norm on a hypersurface to the $L^2$-norm
in the solid ball. %For a doubling condition, 
%We need an $L^2$-norm also on the hypersurface.
The following lemma gives us the missing connection between the $H^1$-norm %on the hypersurface
and the $L^2$-norm on the hypersurface; the additional $H^2$-norm on the solid will
be bounded using interior elliptic estimates. 

\begin{lemma} \label{lemma_trace_r_n-1}
 Let $w\in H^2(\R^n)$ and consider the trace of $w$ onto 
$$\{ x\in\R^n : x_n=0\}=\R^{n-1},$$ 
which we denote by $w$. Then there exists a constant $C$ depending only on $n$ such that for any $\eta>0$,
\begin{equation} \label{trace_estimate}
 ||\nabla w||_{L^2(\R^{n-1})} \leq \eta ||w||_{H^2(\R^n)} + \frac{C}{\eta^2}||w||_{L^2(\R^{n-1})}.
\end{equation}
\end{lemma}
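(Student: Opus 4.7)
The estimate is an interpolation inequality between $L^2(\R^{n-1})$ and $H^{3/2}(\R^{n-1})$ (i.e.~the sharp trace space), combined with the trace theorem. The Sobolev index $1$ sits between $0$ and $3/2$ at the interpolation ratio $2/3$--$1/3$, and the exponents $\eta$ and $\eta^{-2}$ in \eqref{trace_estimate} are dictated exactly by this ratio via Young's inequality.

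The plan is to work on the Fourier side. First I would invoke the standard trace theorem: for $w\in H^2(\R^n)$, the trace $v(x'):=w(x',0)$ lies in $H^{3/2}(\R^{n-1})$ and
\begin{equation*}
\|v\|_{H^{3/2}(\R^{n-1})} \leq C_n \|w\|_{H^2(\R^n)}.
\end{equation*}
Let $\hat v(\xi')$ denote the Fourier transform of $v$ on $\R^{n-1}$, so that
\begin{equation*}
\|\nabla v\|_{L^2(\R^{n-1})}^2 = c_n\int_{\R^{n-1}} |\xi'|^2\,|\hat v(\xi')|^2\, d\xi'.
\end{equation*}
Next I would split the integral at some frequency $R>0$ to be chosen. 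On $\{|\xi'|\le R\}$ I bound $|\xi'|^2\le R^2$ to produce $R^2\|v\|_{L^2(\R^{n-1})}^2$. On $\{|\xi'|>R\}$ I use the elementary inequality $|\xi'|^2\le |\xi'|^3/R$ to get
\begin{equation*}
\int_{|\xi'|>R}|\xi'|^2|\hat v|^2\,d\xi' \le \frac{1}{R}\int_{\R^{n-1}}|\xi'|^3|\hat v|^2\,d\xi' \le \frac{C}{R}\|v\|_{H^{3/2}(\R^{n-1})}^2 \le \frac{C'}{R}\|w\|_{H^2(\R^n)}^2.
\end{equation*}

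Combining the two bounds and using $\sqrt{a+b}\le\sqrt{a}+\sqrt{b}$, I would arrive at
\begin{equation*}
\|\nabla v\|_{L^2(\R^{n-1})} \le R\,\|v\|_{L^2(\R^{n-1})} + C R^{-1/2}\|w\|_{H^2(\R^n)}.
\end{equation*}
Finally, choosing $R = C^2\eta^{-2}$ so that $C R^{-1/2}=\eta$ yields \eqref{trace_estimate} with the constant $C^2$ in front of $\eta^{-2}\|w\|_{L^2(\R^{n-1})}$.

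There is essentially no hard step here: the proof is a routine Littlewood--Paley-type splitting with the trace theorem supplying the half-derivative gap between $H^{3/2}(\R^{n-1})$ and the ambient $H^2(\R^n)$-norm. The only point deserving a line of comment is that $\|\nabla v\|_{L^2(\R^{n-1})}$ refers to the $(n-1)$-dimensional gradient of the trace, which is why the Fourier multiplier is $|\xi'|$ rather than $|\xi|$; everything else is bookkeeping. (Alternatively, one can bypass Fourier altogether by quoting the interpolation inequality $\|v\|_{\dot H^1(\R^{n-1})}\le C\|v\|_{L^2(\R^{n-1})}^{1/3}\|v\|_{\dot H^{3/2}(\R^{n-1})}^{2/3}$ and applying Young's inequality with exponents $3/2$ and $3$, which produces the same $\eta$ versus $\eta^{-2}$ split.)
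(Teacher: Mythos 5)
Your proposal is correct and follows essentially the same route as the paper: the paper first records the interpolation inequality $\|\nabla w\|_{L^2(\R^{n-1})} \leq \eta\|w\|_{\dot H^{3/2}(\R^{n-1})} + \eta^{-2}\|w\|_{L^2(\R^{n-1})}$ (proved, as you do, via Fourier transform, Plancherel and Young's inequality) and then applies the trace theorem $H^2(\R^n)\to H^{3/2}(\R^{n-1})$. Your explicit frequency splitting at $R\sim\eta^{-2}$ is just the worked-out version of that same argument.
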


\begin{proof}
First we note that for any $w\in H^{3/2}(\R^{n-1})$, $\eta>0$, there holds
\begin{equation} \label{H^1-H^3/2-L^2}
 ||\nabla w||_{L^2(\R^{n-1})} \leq \eta ||w||_{\dot{H}^{3/2}(\R^{n-1})} +\frac{1}{\eta^2}||w||_{L^2(\R^{n-1})}.
\end{equation}
This estimate is a J.~L.~Lions-type lemma and can be easily proven by using the Fourier transform,
Plancheral identity and Young's inequality.
Next, using the trace theorem $H^{3/2}(\R^{n-1}) \subset H^2(\R^n)$ (continuously),
for any $w\in H^2(\R^n)$ we obtain from \eqref{H^1-H^3/2-L^2}
\begin{equation*}
 ||\nabla w||_{L^2(\R^{n-1})} \leq C \eta  ||w||_{H^2(\R^n)} + \frac{1}{\eta^2}||w||_{L^2(\R^{n-1})}.
\end{equation*}
Using $\eta/C$ instead of $\eta$, we obtain \eqref{trace_estimate}.
\end{proof}

\begin{proof}[Proof of Theorem \ref{thm_doubling_bdry}]
Since $u=v$ on $\pO$, we can rewrite \eqref{doubling_bdry} (which we want to prove for $r$ small enough) as
\begin{equation} \label{doubling_bdry_v}
 \int_{B(x,2r) \cap \pO}  v^2 \leq  2^{C\lambda^5} \int_{ B(x,r) \cap \pO}  v^2.
\end{equation}
Fix the point $x=x_0\in\pO$ in which we want to prove \eqref{doubling_bdry_v}.
%In order to have coefficients of the underlying equation bounded independently of $\lambda$ as in \eqref{ellipticity}-\eqref{L-infinity},
We use the scaling from Section \ref{frequency_for_v}: $v_{x_0,\lambda}(x)=v(x_0+x/\lambda)$, $x\in B(0,\delta)$ (here $\delta$ is such that $B(x_0,\delta/\lambda_1)\subset D$).
Then $v_{x_0,\lambda}$ satisfies \eqref{equation_v_lambda} with the coefficients 
 $A_{x_0,\lambda}(y)$, $b_{x_0,\lambda}(y)$ and $c_{x_0,\lambda}(y)$
bounded uniformly in $L^{\infty}$ by a constant depending only on $\Omega$, i.e.~they satisfy
\eqref{ellipticity}-\eqref{L-infinity} with $\alpha,K,\gamma$ independent of $\lambda$ (the Lipschitz constant $\gamma$ only improves
with the scaling, ellipticity $\alpha$ stays the same). Proving %\eqref{doubling_bdry_v} 
Theorem \ref{thm_doubling_bdry}
is equivalent to
proving, for any $r<r_1$ (where $r_1$ is to be determined, depending on $\Omega$),
\begin{equation} \label{doubling_bdry_v_lambda}
  \int_{B(0,2r) \cap \pO_{x_0,\lambda}}  v_{x_0,\lambda}^2 \leq  2^{C\lambda^5} \int_{ B(0,r) \cap \pO_{x_0,\lambda}}  v_{x_0,\lambda}^2,
\end{equation}
where $\Omega_{x_0,\lambda} = \{x: x_0+ x/\lambda \in \Omega \}$.
%>
From Theorem \ref{thm_doubling}, we know that there
exist constants $r_0, C$ depending only on $\Omega$ and $n$ such that 
for any $r\leq r_0/2$, %and $x\in \pO$,
%From \eqref{doubling_v_again}, we know that for any $r \leq r_0/2$,
\begin{equation} \label{doubling_v_lambda} 
\int_{B(0,2r)}  v_{x_0,\lambda}^2 \leq  2^{C\lambda^5} \int_{B(0,r)}  v_{x_0,\lambda}^2.
\end{equation}
Fix this $r_0$.

Since we want to use the lemmas above, we will flatten out the hypersurface $\pO$. Since $\Omega$ is a $C^2$-domain,
we can assume that in $B(x_0,r_0/\lambda)$ it is a graph of a $C^2$-function whose $C^2$-norm is bounded by a constant $M$
independent of $x_0$ and $\lambda$ -- otherwise we just diminish $r_0$. After scaling by $\lambda$, this bound only improves. Hence,
we can assume that
\begin{equation*}
 B(0,r_0) \cap \partial\Omega_{x_0,\lambda} = \{ (x',x_n)\in B(0,r_0): x'\in B^{n-1}(0,r_0), x_n=\Phi(x') \},
\end{equation*}
where 
$\Phi\in C^2(B^{n-1}(0,r_0))$, $\Phi(0)=0$, $\nabla \Phi (0) =0$, 
%\begin{equation*} %\label{Phi_C_2_bound}
$||\Phi||_{C^2}\leq M$,
%\end{equation*}
and hence (by possibly diminishing $r_0$) 
\begin{equation} \label{Phi_C_1_bound}
||\Phi||_{C^1(B^{n-1}(0,r_0))}\leq \epsilon,
\end{equation}
%where 
$\epsilon \leq 0.1$ %will 
to be chosen later.
Now 
define the injective map $F:B^{n-1}(0,r_0) \times \R \to \R^n$ %(defined only on a subset of $\R^n$) 
by
$$
F(x',x_n):= (x',x_n + \Phi(x')).
$$
The inverse $F^{-1}(x',x_n) = (x',x_n - \Phi(x'))$ is well-defined on $B(0,r_0)$. 
Using \eqref{Phi_C_1_bound} one can easily compute that 
for any $r<r_0$ resp. $r<r_0/(1+\epsilon)$,
%this is well-defined on
%$B(0,r_0/1.1)$, and 
%maps $B(0,r)$ into $B(0,1.1r)$ for any $r\leq r_0/1.1$.
\begin{gather} 
 B(0,r/(1+\epsilon)) \subset F(B(0,r)) \subset B(0,(1+ \epsilon) r)  \label{F_subset}, \\ %\ \ \text{ for any } r\leq r_0,
\begin{aligned}
F(B^{n-1}(0,r/(1+\epsilon))\times\{0\} )  \subset  B(0,r) &\cap \partial \Omega_{x_0,\lambda} \\
&\subset F(B^{n-1}(0,(1+\epsilon) r)\times\{0\} ). \label{F_supset_bdry}
\end{aligned}
\end{gather}
Therefore, to prove \eqref{doubling_bdry_v_lambda}, it is enough to show
\begin{equation} \label{doubling_bdry_v_lambda_F}
  \int_{F(B^{n-1}(0,2r)\times \{0\} )}  v_{x_0,\lambda}^2 \leq  2^{C\lambda^5} \int_{ F(B^{n-1}(0,r)\times \{0\} )}  v_{x_0,\lambda}^2
\end{equation}
for all $r\leq r_1$ for some $r_1\leq r_0/2(1+\epsilon)$ depending only on $\Omega$
(we can use this doubling condition twice to account for the loss of the factor $(1+\epsilon)^2$ in radii).
%$ \left(\frac{2}{(1+\epsilon )^2}  \right)^2 \geq  \left(\frac{2}{1.1^2}  \right)^2 >2$).

Now define 
$$w(x):=v_{x_0,\lambda}(F(x)) \ \ \text{ for } x\in B(0,r_0/(1+\epsilon)).$$ %/(1+\epsilon)).$$
We apply area formula to both sides of \eqref{doubling_bdry_v_lambda_F} under the map $F|_{B^{n-1}(0,2r)\times\{0\}}$.
The (generalized) Jacobian of this map will be
%$$
%J = \sqrt{\det ( (I_{n-1},\nabla \Phi) (I_{n-1},\nabla \Phi)^T ) } = \sqrt{\det ( I_{n-1} + (\nabla \Phi)(\nabla \Phi)^T)} 
$J= \sqrt{1+ |\nabla \Phi|^2}$,
%$$
so using \eqref{Phi_C_1_bound}, we get $1  \leq J \leq 1+\epsilon \leq 1.1$.
%the map $F$ is bounded on both sides by absolute constants, so after change of variables
%in \eqref{doubling_bdry_v_lambda_F} 
Hence, to prove \eqref{doubling_bdry_v_lambda_F}, it is enough to prove
\begin{equation} \label{doubling_bdry_w}
  \int_{B^{n-1}(0,2r)\times \{0\} }  w^2 \leq  2^{C\lambda^5} \int_{ B^{n-1}(0,r)\times \{0\} }  w^2
\end{equation}
for all $r\leq r_1$ for some $r_1\leq r_0/2(1+\epsilon)$ depending only on $\Omega$. Using the same arguments, %(\eqref{F_subset}, %\eqref{F_supset},
%using the doubling condition twice and the Jacobian of $F$ being 1), 
it follows from \eqref{doubling_v_lambda} that
\begin{equation} \label{doubling_w}
\int_{B(0,2r)}  w^2 \leq  2^{C\lambda^5} \int_{B(0,r)}  w^2 \ \ \text{ for any } r\leq r_0/2(1+\epsilon).
\end{equation}
%for any $r \leq r_0/2(1+\epsilon)$.
Using that $v_{x_0,\lambda}$ solves the equation \eqref{equation_v_lambda}, we obtain that $w=v_{x_0,\lambda} \circ F $ solves
\begin{equation} \label{equation_w}
 \Div (\tilde{A} \nabla w ) + \tilde{b}\cdot \nabla w + \tilde{c} w=0,
%\Div (A_{x_0,\lambda} \nabla w ) + b_{x_0,\lambda} \cdot \nabla w +c_{x_0,\lambda} w=0
\end{equation}
where
\begin{equation*}
\begin{aligned}
\tilde{A} &= A_{x_0,\lambda}\circ F +P\circ F,\\
 \tilde{b} &= b_{x_0,\lambda} \circ F +Q \circ F,  \\
\tilde{c} &= c_{x_0,\lambda} \circ F.
\end{aligned}
\end{equation*}
The matrix enries of the perturbation $P$ have the form of a sum of products between $a^{ij}$'s and $\Phi_{x_i}$'s,
and the vector entries of $Q$ are sums of products of $a^{ij}_{x_n}$'s resp.~$b^i$'s and $\Phi_{x_i}$'s.
\begin{comment}
The perturbation $P$ is symmetric and for $x=(x',x_n)$,
%\begin{equation*}
\begin{align*} 
 P^{ij}(x) &= 0 \qquad \qquad \qquad \qquad \qquad \qquad \ \ 
%\hfill
\text{ for } i,j=1,\dots,n-1, \\
P^{in}(x) &= - \sum_{j=1}^{n-1} a^{ij}(x) \Phi_{x_j}(x')  \qquad \qquad \qquad 
%\hfill
\text{ for } i=1,\dots,n-1, \\
P^{nn}(x) &= -2\sum_{i=1}^{n-1}  a^{in}(x)\Phi_{x_i}(x') + \sum_{i,j=1}^{n-1} a^{ij}(x) \Phi_{x_i}(x')\Phi_{x_j}(x'),
\end{align*}
%\end{equation*}
%For the perturbation $Q$ we have
\begin{equation*}
\begin{aligned}
Q^i(x) &= \sum_{j=1}^{n-1} a^{ij}_{x_n}(x) \Phi_{x_j}(x')  \ \ \ \ \ \text{ for } i=1,\dots,n-1, \\
Q^n (x) &= - \sum_{i=1}^{n-1} b^i(x) \Phi_{x_i}(x')  + \sum_{i=1}^{n-1} a^{in}_{x_n}(x)\Phi_{x_i}(x') - \sum_{i,j=1}^{n-1} a^{ij}_{x_n}(x) \Phi_{x_i}(x')\Phi_{x_j} (x').
\end{aligned}
\end{equation*}
\end{comment}
Hence, %using the bound \eqref{Phi_C_1_bound} %and \eqref{Phi_C_2_bound} 
%on the derivatives of $\Phi$, we have
\begin{equation*}
\begin{aligned}
||P||_{L^{\infty}} &
%\leq C(K) ||\Phi||_{C^1} 
\leq C K \epsilon, \\
||Q||_{L^{\infty}} &
%\leq C(K) ||\Phi||_{C^2} 
\leq C (K  + \gamma ) \epsilon,
\end{aligned}
\end{equation*}
and the coefficients of equation \eqref{equation_w} will satisfy $L^{\infty}$-bounds 
depending only on $\Omega$,
the Lipschitz constant of $\tilde A$ as well, and if we choose $\epsilon$ small enough, we also get the ellipticity
of $\tilde{A}$ depending only on $\Omega$ (or even an absolute bound like $\tilde{\alpha} \geq 1/2$).

%Now we are ready to prove \eqref{doubling_bdry_w} for %any $r<r_1$, where $r_1=\frac{7}{8} \frac{r_0}{2.2}$. Fix such $r$.
%$r$ small enough. 
Now fix $r\leq r_0/2(1+\epsilon)$, so that \eqref{doubling_w} holds.
We use scaling again so that we move to balls of fixed radii: 
for $x\in B(0,2)$, denote
%\begin{equation} \label{tilde_c}
$\tilde w (x):=\tilde{c} w(rx)$, %\ \ \ x\in B(0,2),
%\end{equation}
where the constant $\tilde{c}$ is chosen so that
%By multiplying $w$ by a constant, we can assume that
\begin{equation} \label{int_2r}
 \int_{B(0,2)} \tilde w^2 =1.
\end{equation}
Using the doubling condition \eqref{doubling_w} twice, we obtain
\begin{equation} \label{int_r/2}
 \int_{B(0,1/2)} \tilde w^2 \geq 2^{-C\lambda^5}.
\end{equation}
Note that $\tilde w$ satisfies an equation of type \eqref{equation}
with the coefficients satisfying \eqref{ellipticity}-\eqref{L-infinity} with $\alpha$, $K$,$\gamma$ depending
only on $\Omega$. %: compared to the equation \eqref{equation_w} satisfied by $w$, after the scaling, as we have seen before,
%the $L^{\infty}$ bounds of the lower-order coefficients only improve, while the leading order coefficients
%keep the $L^{\infty}$-bounds and ellipticity and improve the Lipschitz constant.
%Using Lemma \ref{quant_cauchy}, 
We will show that
 \begin{equation} \label{H^1-norm}
 ||\tilde w||_{H^1(B^{n-1}(0,3/4 ))} + ||(\p \tilde w/\p x_n)||_{L^2(B^{n-1}(0,3/4 ))} \geq 2^{-C\lambda^5} %\leq \epsilon \ll 1,
\end{equation}
with %a different constant 
$C$ still depending only on $\Omega$.
Take the constants $C=C_1,\beta$ from Lemma \ref{quant_cauchy} used for $\tilde w$ which only depend on %$n$ and 
%the $L^{\infty}$-bounds of $\tilde{A}$, $\tilde{b}$, $\tilde{c}$, the Lipschitz constant of $\tilde{A}$, and the ellipticity of $\tilde{A}$,
%which all depend only on 
$\Omega$. Then if we have
$$ ||\tilde w||_{H^1(B^{n-1}(0,3/4 ))} + ||(\p \tilde w/\p x_n)||_{L^2(B^{n-1}(0,3/4 ))} = \tilde{\epsilon} \ll 1, $$
then Lemma \ref{quant_cauchy} and \eqref{int_2r} imply
$
|| \tilde w ||_{L^2(B(0, 1/2))} \leq C_1 \tilde\epsilon^{\beta}.
$
Since we have the lower bound \eqref{int_r/2}, it follows that
$ \tilde\epsilon \geq C_1^{-1/\beta} 2^{-C\lambda^5 /2 \beta }, $
so we get \eqref{H^1-norm} with the constant $(C + 2\log_2 C_1 )/2\beta$. 

Now we will show that 
\begin{equation} \label{H^1-normal}
||\nabla \tilde w||_{L^2(B^{n-1}(0,3/4 ))} \geq \frac{1}{C}
|| (\p \tilde w/\p x_n)||_{L^2(B^{n-1}(0,3/4 ))},  
\end{equation}
and hence
%we do not have to add $|| (\p w/\p x_n)||_{L^2(B^{n-1}(0,3r/4 ))} $ in 
from \eqref{H^1-norm} %and it will still be valid (with a diferent constant). 
we will get a lower bound for $||\tilde w||_{H^1(B^{n-1}(0,3/4 ))}$.
After that, we will show that $||\tilde w||_{L^2(B^{n-1}(0,7/8 ))}$ controls $||\tilde w||_{H^1(B^{n-1}(0,3/4 ))}$,
and hence we will get a lower bound for it, which will be a significant part of the doubling
condition on the hypersurface.

Recall that $\tilde w(x)= \tilde c w(rx) = \tilde cv_{x_0,\lambda}(F(rx))$, $F(B^{n-1}(0,2r)\times \{0\} )\subset  \p \Omega_{x_0,\lambda}$,
and the normal derivative of $v_{x_0,\lambda}$ on $\p \Omega_{x_0,\lambda}$  is zero. After using
the area formula as above, we obtain ($\nabla_T$ denotes the tangential gradient):
\begin{align*}
||\nabla \tilde w||_{L^2(B^{n-1}(0,3/4 ))} &=
\tilde c r^{-\frac{n-3}{2}} ||\nabla  w||_{L^2(B^{n-1}(0,3r/4 ))}  \\
&\geq \tilde c r^{-\frac{n-3}{2}} C^{-1} ||\nabla_T v_{x_0,\lambda}||_{L^2(F(B^{n-1}(0,3r/4 )))} \\
&= \tilde c r^{-\frac{n-3}{2}} C^{-1} ||\nabla v_{x_0,\lambda}||_{L^2(F(B^{n-1}(0,3r/4 )))}, \\
|| (\p \tilde w/\p x_n)||_{L^2(B^{n-1}(0,3/4 ))} & =
\tilde c r^{-\frac{n-3}{2}}  || (\p w/\p x_n)||_{L^2(B^{n-1}(0,3r/4 ))} \\
& \leq  \tilde c r^{-\frac{n-3}{2}} C ||\nabla v_{x_0,\lambda}||_{L^2(F(B^{n-1}(0,3r/4 )))}, \\
\end{align*}
and \eqref{H^1-normal} follows. Together with \eqref{H^1-norm}, we hence obtain %(with a different constant than in \eqref{H^1-norm})
\begin{equation} \label{H^1_proper-norm}
 ||\tilde w||_{H^1(B^{n-1}(0,3/4 ))}  \geq 2^{-C\lambda^5}. 
\end{equation}

Now we will show that
\begin{equation} \label{H^1-L^2}
 ||\tilde w||_{H^1(B^{n-1}(0,3/4 ))} \geq \tilde\epsilon \ \ \Rightarrow \ \  ||\tilde w||_{L^2(B^{n-1}(0,7/8 ))}\geq \tilde\epsilon^3 /C.
\end{equation}
%We will use Lemma \ref{lemma_trace_r_n-1}. Since it involves functions defined in the whole $\R^n$,
Introduce a cut-off function $\varphi\in C^{\infty}(\R^n)$ such that
\begin{align*}
\varphi &=1 \ \ \ \text{ on } B(0,3/4), \\
\varphi &=0 \ \ \ \text{ on } \R^n \setminus B(0,7/8), \\
|| \varphi||_{C^2(\R^n)} &\leq C.
\end{align*}
Then using Lemma \ref{lemma_trace_r_n-1} for the function $\tilde w'=\tilde w\cdot \varphi$ (extended by $0$ beyond $\R^n \setminus B(0,7r/8)$),
we get that for any $\eta>0$,
\begin{equation} \label{grad_w_l^2}
\begin{aligned}
 ||\nabla \tilde w||_{L^2(B^{n-1}(0,3/4 ))} &\leq ||\nabla \tilde w'||_{L^2(\R^{n-1})} \\
&\leq \eta ||\tilde w'||_{H^2(\R^n)} + \frac{C}{\eta^2}||\tilde w'||_{L^2(\R^{n-1})} \\
&\leq C\eta ||\tilde w||_{H^2(B(0,7/8))} + \frac{C}{\eta^2}||\tilde w||_{L^2(B^{n-1}(0,7/8))} \\
&\leq C\eta  ||\tilde w||_{L^2(B(0,1))} + \frac{C}{\eta^2}|| \tilde w||_{L^2(B^{n-1}(0,7/8))} \\
&\leq C\eta  + \frac{C}{\eta^2}||\tilde w||_{L^2(B^{n-1}(0,7/8))},
\end{aligned}
 \end{equation}
where in the last two lines we used the interior elliptic estimate for strong solutions
$$
||\tilde  w||_{H^2(B(0,7/8))} \leq C||\tilde w||_{L^2(B(0,1))}
$$
(see \cite[Theorem 9.11]{trudinger}) and \eqref{int_2r}.
Choosing $\eta=  \tilde\epsilon/2C $ in \eqref{grad_w_l^2}, where 
$$||\tilde w||_{H^1(B^{n-1}(0,3/4 ))} \geq \tilde\epsilon,$$
we obtain
\begin{equation*}
\begin{aligned}
\tilde\epsilon - ||\tilde w||_{L^2(B^{n-1}(0,3/4))} &\leq ||\nabla \tilde w||_{L^2(B^{n-1}(0,3/4 ))} \\
&\leq  \tilde\epsilon/2  + \frac{C}{ \tilde\epsilon^2}||\tilde w||_{L^2(B^{n-1}(0,7/8))},
\end{aligned}
\end{equation*}
and \eqref{H^1-L^2} follows. Together with \eqref{H^1_proper-norm}, we obtain (with a different constant)
\begin{equation} \label{L^2-norm_bdry}
 ||\tilde w||_{L^2(B^{n-1}(0,7/8 ))}  \geq 2^{-C\lambda^5}. 
\end{equation}
On the other hand, the trace theorem, the interior elliptic estimate \cite[Theorem 9.11]{trudinger}, %Theorem 9.11, 
 and  \eqref{int_2r} imply
\begin{equation*}
  ||\tilde w||_{L^2(B^{n-1}(0,7/4 ))}  \leq  C ||\tilde w||_{H^1(B^{n}(0,7/4 ))} 
\leq \tilde C||\tilde w||_{L^2(B(0,2))} = \tilde C,
\end{equation*}
so
\begin{equation*}
  ||\tilde w||_{L^2(B^{n-1}(0,7/8 ))} \geq 2^{-C\lambda^5} ||\tilde w||_{L^2(B^{n-1}(0,7/4 ))},
\end{equation*}
which translates into
\begin{equation*}
  || w||_{L^2(B^{n-1}(0,7r/8 ))} \geq 2^{-C\lambda^5} || w||_{L^2(B^{n-1}(0,7r/4 ))}.
\end{equation*}
Since $r\leq r_0/2(1+\epsilon)$ was arbitrary, we proved \eqref{doubling_bdry_w} for any $r\leq r_1$,
$r_1 = 7 r_0/ (16\cdot 1.1)$.
\end{proof}

In the process of the above proof, we showed that the average integral of $u^2$ over a small ball on the boundary
controls the average integral of $u^2$ over a small solid ball with the same center. Since we will use
this result in the next chapter (independently of the doubling condition), let us formulate it precisely.
\begin{cor} \label{bdry_controls_solid}
 There exist $C,r_0>0$ depending only on $\Omega$ such that for all $r\leq r_0$ and $x_0\in\pO$, there holds
\begin{equation} \label{L^2_bdry_controls_solid}
\int_{B\left(x_0,\frac{r}{\lambda}\right) \cap \pO} u^2 \geq 2^{-C\lambda^5} \frac{\lambda}{r} \int_{B\left(x_0,\frac{2r}{\lambda}\right)\cap \Omega} u^2.
\end{equation}
\end{cor}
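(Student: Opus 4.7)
The plan is to extract, from the argument already carried out for Theorem \ref{thm_doubling_bdry}, precisely the chain of estimates that compares the $L^2$ mass of $u$ on a boundary ball to the $L^2$ mass on a solid ball of comparable size, stopping before the final radius-halving step on the boundary.

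First I would fix $x_0\in\pO$, take $r\leq r_0$ for a sufficiently small $r_0$ depending only on $\Omega$, and repeat the reduction performed in Section \ref{sec-doubling-on-bdry}: pass from $u$ to $v=ue^{\lambda d}$, rescale by $\lambda$ to produce $v_{x_0,\lambda}$, flatten the boundary using the graph map $F$, and set $w=v_{x_0,\lambda}\circ F$, which then solves the uniformly elliptic equation \eqref{equation_w} with coefficients satisfying \eqref{ellipticity}-\eqref{L-infinity} with constants depending only on $\Omega$. After normalizing by $\tilde w(x)=\tilde c\,w(rx)$ so that $\int_{B(0,2)}\tilde w^2=1$, the desired inequality translates into
\[
\int_{B^{n-1}(0,7/8)\times\{0\}} \tilde w^2 \;\geq\; 2^{-C\lambda^5}.
\]

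Second, I would run the same three-step chain used to prove \eqref{L^2-norm_bdry}: (i) apply the solid doubling \eqref{doubling_w} to obtain $\int_{B(0,1/2)}\tilde w^2 \geq 2^{-C\lambda^5}$; (ii) apply the contrapositive of the quantitative Cauchy uniqueness Lemma \ref{quant_cauchy} to conclude $\|\tilde w\|_{H^1(B^{n-1}(0,3/4))}+\|\partial\tilde w/\partial x_n\|_{L^2(B^{n-1}(0,3/4))}\geq 2^{-C\lambda^5}$; then use that the normal derivative of $v_{x_0,\lambda}$ vanishes on $\partial\Omega_{x_0,\lambda}$ (as in the derivation of \eqref{H^1-normal}) to bound the normal-derivative term by the tangential one, giving $\|\tilde w\|_{H^1(B^{n-1}(0,3/4))}\geq 2^{-C\lambda^5}$; (iii) apply the Lions-type Lemma \ref{lemma_trace_r_n-1} together with the interior elliptic $H^2$ estimate and the normalization to arrive at $\|\tilde w\|_{L^2(B^{n-1}(0,7/8))}\geq 2^{-C\lambda^5}$, as in \eqref{L^2-norm_bdry}.

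Third, I would unwind the rescaling and flattening, using $v=u$ on $\pO$ and the pointwise bound $|v|\geq |u|$ in $\Omega_\delta$, to convert this last estimate into
\[
\int_{B(x_0,r/\lambda)\cap\pO} u^2 \;\geq\; \frac{2^{-C\lambda^5}\lambda}{r}\int_{B(x_0,2r/\lambda)\cap\Omega} u^2,
\]
absorbing the small geometric factors lost when passing from the radius $7r/(8\lambda)$ to $r/\lambda$ and from $(1+\epsilon)r$ to $2r$ by one further application of \eqref{doubling_w} (each such step only enlarges the constant $C$).

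Since every ingredient has already been assembled for Theorem \ref{thm_doubling_bdry}, no genuinely new difficulty arises; the content of the corollary is the observation that the middle portion of that proof already establishes boundary-to-solid control, and the radius-doubling step used at the end of the theorem's proof is not needed here. The only small technical point is careful bookkeeping of the radii and the factor $\lambda/r$ that arises from the ratio of $(n{-}1)$-dimensional and $n$-dimensional volumes under the $\lambda$-rescaling.
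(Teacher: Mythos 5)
Your proposal is correct and follows essentially the same route as the paper: the paper's proof of this corollary simply cites the estimate \eqref{L^2-norm_bdry} already established in the proof of Theorem \ref{thm_doubling_bdry} and then unwinds the normalization constant $\tilde c$ and the changes of variables under $F$ and the $\lambda$-rescaling, exactly as you describe. The only cosmetic difference is in the radius bookkeeping at the end: the paper absorbs the factors $7/8$ and $(1+\epsilon)$ by choosing $\epsilon$ with $(1+\epsilon)^2<8/7$ rather than by an extra application of the solid doubling condition, but both work.
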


\begin{proof}
This follows from \eqref{L^2-norm_bdry}. Looking back at the definition of $\tilde{c}$ by %\eqref{tilde_c} and 
\eqref{int_2r},
we have
\begin{equation*}
\begin{aligned}
 \frac{1}{\tilde{c}^2} &= \int_{B(0,2)} w^2(rx) dx = \frac{1}{r^n} \int_{B(0,2r)}v_{x_0,\lambda}^2(F(x)) \\
&\geq \frac{\lambda^n}{r^n} \int_{B\left(x_0,\frac{2r}{(1+\epsilon)\lambda}\right)} v^2 
\geq \frac{\lambda^n}{r^n} \int_{B\left(x_0,\frac{2r}{(1+\epsilon)\lambda}\right) \cap \Omega } u^2.
\end{aligned}
\end{equation*}
We  used a change of variables by the map $F$, whose Jacobian is 1 and for which \eqref{F_subset} holds.
On the other hand, from \eqref{L^2-norm_bdry}, we obtain
\begin{equation*}
\begin{aligned}
\frac{1}{\tilde{c}^2} 2^{-C\lambda^5} &\leq \frac{1}{\tilde{c}^2} \int_{B^{n-1}(0,7/8)} \tilde w^2
= \frac{1}{r^{n-1}} \int_{B^{n-1}(0,7/8 r)} w^2 \\
&= \frac{1}{r^{n-1}} \int_{B^{n-1}(0,7/8 r)} v_{x_0,\lambda}^2(F(x)) \\
&\leq \frac{\lambda^{n-1}}{r^{n-1}} \int_{B\left(x_0, \frac{7(1+\epsilon)}{8} r\right) \cap \pO} v^2
= \frac{\lambda^{n-1}}{r^{n-1}} \int_{B\left(x_0, \frac{7(1+\epsilon)}{8} r\right) \cap \pO} u^2.
\end{aligned}
 \end{equation*}
Here we used a change of variables by the map $F|_{B^{n-1}(0,r)\times\{0\}}$, whose
Jacobian is larger or equal to 1 and for which \eqref{F_supset_bdry} holds.

Combining the above two estimates and using $\epsilon$ such that $(1+\epsilon)^2<8/7$
(this gives us $r_0>0$ depending only on $\Omega$ such that the estimates hold
for $r\leq r_0$), we obtain \eqref{L^2_bdry_controls_solid}.
\end{proof}

%\chapter{Nodal Sets of Steklov Eigenfunctions on Analytic Domains}

%\documentclass[10pt]{article}
%\begin{document}
%\addcontentsline{toc}{chapter}{Introduction}
%\chapter*{Introduction}

%\section{Size of the Nodal Sets of Steklov Eigenfunctions}
\section{Nodal Sets of Steklov Eigenfunctions on Analytic Domains} \label{chap_analytic}

In this section we prove Theorem \ref{thm_steklov_analytic}: 
{
\renewcommand{\thethm}{\ref{thm_steklov_analytic}}
\begin{thm}
 Let $\Omega \subset \R^n$ be an analytic domain. Then there exists a constant $C$ depending
only on $\Omega$ and $n$ such that for any $\lambda>0$ and $u$ %\in C^2(\Omega)\cap C^1(\bar{\Omega})$
which is a (classical) solution to \eqref{main} there holds
\begin{equation} \label{nodal_set_analytic}
 \mathcal H^{n-2} (\{x\in\pO: u(x)=0\}) \leq C \lambda^6.
\end{equation}
\end{thm}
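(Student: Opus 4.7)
The plan is to combine the boundary doubling condition (Theorem \ref{thm_doubling_bdry}) with the real-analyticity of $\pO$ to obtain a local nodal-set bound of order $\lambda^5$ in each of $O(\lambda^{n-1})$ boundary balls of radius $\sim 1/\lambda$. Undoing the $\lambda$-scaling inserts a factor $\lambda^{-(n-2)}$ per ball, and summing yields $\lambda^{n-1}\cdot \lambda^{5-(n-2)} = \lambda^6$, which is exactly the claimed bound.

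First I would exploit the analytic setting in the reflection construction of Section \ref{sec_doubling}: because $\Omega$ is now analytic, the distance function $d$, the reflection map $\Psi$, and hence the coefficients $A, b, c$ of \eqref{main_D} for $v = u e^{\lambda d}$ are real-analytic. Classical interior estimates for elliptic equations with real-analytic coefficients then show that $v$ is real-analytic across $\pO$ with a uniform radius of analyticity after the $\lambda$-scaling of Section \ref{frequency_for_v}. Fix $x_0 \in \pO$, parametrize $\pO$ near $x_0$ as an analytic graph $\{x_n = \Phi(x')\}$ as in Section \ref{sec-doubling-on-bdry}, compose with the scaling $v_{x_0,\lambda}(y) = v(x_0 + y/\lambda)$, and denote by $w$ the resulting real-analytic function on $B^{n-1}(0, r_1)$ with $r_1 = r_1(\Omega)$. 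By Theorem \ref{thm_doubling_bdry} and the bi-Lipschitz control \eqref{F_subset}--\eqref{F_supset_bdry}, $w$ inherits the doubling
\[
\int_{B^{n-1}(0, 2r)} w^2 \leq 2^{C\lambda^5} \int_{B^{n-1}(0, r)} w^2, \qquad 0 < r \leq r_1/2.
\]

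Second, since the scaled equation for $v_{x_0,\lambda}$ has analytic coefficients with bounds depending only on $\Omega$, $v_{x_0,\lambda}$ extends holomorphically to a complex ball $B^{\C^n}(0, r_2)$ with $\sup |v_{x_0,\lambda}|$ controlled by its $L^2$-norm on a slightly larger real ball. Pulling back through the complexification of the analytic graph of $\Phi$, $w$ has a holomorphic extension $\widetilde w$ on a polydisc $B^{\C^{n-1}}(0, r_3)$ with $r_3 = r_3(\Omega)$, satisfying
\[
\sup_{B^{\C^{n-1}}(0, r_3/2)} |\widetilde w| \leq 2^{C\lambda^5} \sup_{B^{n-1}(0, r_3/4)} |w|.
\]
I would then apply the standard complex zero-counting lemma (Crofton's formula reducing $\mathcal H^{n-2}$ to the number of zeros of $\widetilde w$ on complex $1$-dimensional slices, combined with Jensen's formula applied slice-wise, as in \cite{donnelly_fefferman} and \cite{lin_nodal}): a holomorphic function $F$ on a polydisc with $\sup |F|/\sup_{\R}|F| \leq e^N$ has nodal set of $\mathcal H^{n-2}$-measure at most $CN$ on a smaller real ball. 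With $N = C\lambda^5$, each boundary ball contributes $C\lambda^5$ at unit scale, which becomes $C\lambda^{5-(n-2)}$ after undoing the $\lambda$-scaling; summing over the $\lesssim \lambda^{n-1}$ covering balls yields $\mathcal H^{n-2}(\{u=0\} \cap \pO) \leq C\lambda^6$.

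The main obstacle, as in the doubling argument, is to keep the complex growth bound at $2^{C\lambda^5}$ rather than inheriting extra $\lambda$-dependent factors from the analytic continuation step. The coefficients of the scaled equation for $v_{x_0,\lambda}$ must be estimated uniformly in $\lambda$, which is exactly what Section \ref{frequency_for_v} already establishes, so that the Cauchy estimates for the holomorphic extension have $\lambda$-independent constants; any power of $\lambda$ lost here would propagate to the final estimate. In fact, any sharpening of the exponent $5$ in Theorem \ref{thm_doubling_bdry} (the remark after its proof notes that a more careful choice of auxiliary constants gives roughly $4.6$) would immediately improve the exponent $6$ in Theorem \ref{thm_steklov_analytic} to $1$ plus the same exponent, but removing the factor of $\lambda$ inherent in the covering would require a fundamentally sharper doubling than what our methods produce.
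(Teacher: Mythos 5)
Your overall architecture matches the paper's: boundary doubling with constant $2^{C\lambda^5}$, analytic continuation of the boundary restriction to a complex ball of radius $\sim 1/\lambda$, one--complex--variable zero counting (Donnelly--Fefferman) combined with the integral--geometric (Crofton) formula, and a cover by $\lesssim\lambda^{n-1}$ boundary balls of radius $\sim 1/\lambda$, giving $\lambda^{n-1}\cdot\lambda^{5-(n-2)}=\lambda^{6}$. The covering arithmetic and the use of the doubling condition to normalize at a well-chosen point $x_p$ are exactly as in the paper.

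However, there is a genuine gap in how you produce the holomorphic extension. You propose to get analyticity of $v=ue^{\lambda d}$ \emph{across} $\pO$ from interior analytic estimates for the reflected equation \eqref{main_D}. This fails for two reasons. First, the coefficients of \eqref{main_D} are not analytic across $\pO$ even when $\Omega$ is analytic: $A$ equals $I$ in $\tO$ and $\nabla\Psi(\nabla\Psi)^T$ in $\tO'$, and the paper only proves (and in general can only prove) that this matching is Lipschitz across $\pO$; there is no analytic elliptic operator on all of $D$ to which interior analytic hypoellipticity could be applied. Second, $v$ on $\tO'$ is defined by an \emph{even} reflection, which destroys analyticity across the interface unless all odd-order normal derivatives of $v$ vanish on $\pO$ --- only the first one does. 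So the function you would complexify is generically not real-analytic across $\pO$, and the claimed uniform Cauchy estimates have no foundation. The correct route (the one the paper takes) is to prove analyticity of $u$ itself up to and beyond $\pO$ via analytic regularity for the boundary-value problem (the Lions--Magenes theorem on elliptic iterates), which yields a harmonic extension of $u$ to a full $r_0/\lambda$-neighborhood of $\pO$; the quantitative derivative bounds \eqref{derivative_estimate} then come from interior estimates for this genuinely harmonic extension, and one restricts $u$ (not $v$) to the complexified graph of $\Phi$.

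A second, smaller omission: to close the argument you must convert the $L^2$-norm of $u$ over a \emph{solid} ball (which controls the complex sup via the Cauchy estimates) into the $L^2$-norm of $w$ over a \emph{boundary} ball (which is what the doubling condition and the normalization at $x_p$ refer to). This costs another factor $2^{C\lambda^5}$ and requires the quantitative Cauchy uniqueness estimate, packaged in the paper as Corollary \ref{bdry_controls_solid}, applied on both sides of $\pO$. Your write-up passes from the solid $L^2$-norm to $\sup_{B^{n-1}}|w|$ without naming this ingredient; without it the chain of inequalities does not close.
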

}
The proof relies on the doubling condition on the boundary (Theorem \ref{thm_doubling_bdry})
and the approach used in \cite{lin_nodal} for analytic solutions of elliptic equations
on a solid domain.

For the rest of this section, assume the assumptions of Theorem \ref{thm_steklov_analytic}: $\Omega \subset \R^n$
is an analytic domain and $u$ is a solution to \eqref{main} with eigenvalue $\lambda>0$.

\subsection{Analyticity of the eigenfunctions} 

First we will see that $u$ is analytic up to the boundary $\pO$, and get an estimate on its derivatives
which we will need in a complexification argument.

\begin{prop} \label{derivatives_for_complexification}
The eigenfunction $u$ is real-analytic on a neighborhood of $\overline{\Omega}$. %, i.e.~
More precisely,
there exists $r_0 >0$ depending only on $\Omega$ such that
$u$ can be harmonically extended onto an $\frac{r_0}{\lambda}$-neighborhood of $\Omega$.
Moreover, there exists a constant $C$ such that for every $x_0\in\pO$ and $r\leq r_0$,
\begin{equation} \label{derivative_estimate}
  |D^{\alpha} u(x_0)| \leq C \frac{\alpha !}{(r/2ne\lambda)^{|\alpha|}} \left( \dashint_{B(x_0,r/\lambda)} u^2 \right)^{1/2}.
%||u||_{L^2(B(x_0,r_0/\lambda))}.
\end{equation}
\end{prop}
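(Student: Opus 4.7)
The strategy is to combine the reflection construction of Section \ref{sec_doubling} with the classical Morrey--Nirenberg theorem on analyticity of solutions to elliptic equations with analytic coefficients, and then track the dependence on $\lambda$ via the scaling introduced in Section \ref{frequency_for_v}.

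Since $\pO$ is analytic, the signed distance function $d$ and the reflection map $\Psi$ are both analytic on a $\delta$-neighborhood of $\pO$, so the coefficients $A$, $b$, $c$ of equation \eqref{main_D} satisfied by the reflected $v=u e^{\lambda d}$ are analytic on $D$ with $L^\infty$-bounds \eqref{abc_infty} of orders $O(1)$, $O(\lambda)$, $O(\lambda^2)$ respectively (the matching of $A$ across $\pO$ was already verified in Section \ref{sec_doubling}, and the analyticity across $\pO$ follows in the same way from the analyticity of $\Psi$). I would then rescale exactly as in Section \ref{frequency_for_v}: $v_{x_0,\lambda}(x)=v(x_0+x/\lambda)$ satisfies \eqref{equation_v_lambda} on $B(0,r_1)$ with coefficients whose analyticity radii and $L^\infty$-bounds depend only on $\Omega$. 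Applying the classical Morrey--Nirenberg analyticity estimate to $v_{x_0,\lambda}$ then yields a holomorphic extension of $v_{x_0,\lambda}$ to a fixed complex ball $B^{\C^n}(0,\rho)$ with sup-norm controlled by $\|v_{x_0,\lambda}\|_{L^2(B(0,r_1))}$.

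Rescaling back produces a holomorphic extension of $v$ on a complex neighborhood of size $\rho/\lambda$ around each boundary point. Since $u$ is harmonic in $\Omega$ and real-analytic up to $\pO$ (via the relation $u=v e^{-\lambda d}$ with $d$ analytic), its real-analytic continuation automatically satisfies $\Delta u=0$, producing the desired harmonic extension to an $r_0/\lambda$-neighborhood of $\overline{\Omega}$. The derivative estimate \eqref{derivative_estimate} then follows from the standard Cauchy estimate for harmonic functions,
\begin{equation*}
|D^\alpha h(x_0)|\leq \frac{C(n)^{|\alpha|}\,\alpha!}{R^{|\alpha|}}\sup_{B(x_0,R/2)}|h|,
\end{equation*}
combined with the sub-mean-value bound $\sup_{B(x_0,R/2)}|h|\leq C(\dashint_{B(x_0,R)} h^2)^{1/2}$ for harmonic $h$, applied to the harmonic extension of $u$ on a ball of radius $R\sim r/\lambda$, with the explicit constant $r/2ne\lambda$ extracted by choosing the ratio $R/|\alpha|$ optimally.

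The main obstacle is confirming that the constants produced by Morrey--Nirenberg depend only on $\Omega$ and not on $\lambda$. This hinges on the observation that the rescaling $x\mapsto x_0+x/\lambda$ \emph{improves} the analyticity radii of the coefficients by a factor of $\lambda\geq 1$ while keeping their $L^\infty$-bounds uniform, so the classical analyticity estimates apply with constants depending only on $\Omega$. A secondary but routine subtlety is reconciling the two extensions of $u$: the one obtained via $u=v e^{-\lambda d}$ on the exterior side (with $d$ extended analytically) and the one given by direct analytic/harmonic continuation of $u$ itself; these agree by uniqueness of analytic continuation of the identity $\Delta u=0$.
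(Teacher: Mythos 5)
The central step of your argument does not go through: the reflected equation \eqref{main_D} does \emph{not} have analytic coefficients across $\pO$, so the interior Morrey--Nirenberg theorem cannot be applied on a full ball centered at a boundary point. The coefficients on $\tO'$ are obtained by composing with the reflection $\Psi$ (formulas \eqref{abc2}); they are analytic on each side of $\pO$ separately, but the two analytic pieces are not continuations of one another. The paper itself only proves that $A$ is \emph{Lipschitz} across $\pO$, and even that required the special identity $\nabla\Psi(\nabla\Psi)^T=I$ \emph{on} $\pO$ (it fails in a neighborhood of $\pO$ inside $\tO'$). Worse, the drift is genuinely discontinuous: already for a flat boundary piece $\{x_n=0\}$ one has $b=-2\lambda\nabla d=2\lambda e_n$ on the inside, while \eqref{abc2} gives $b^n=-2\lambda$ on the outside; and $c$ is an even reflection of $\lambda^2-\lambda\Delta d$, which is Lipschitz but not analytic across the interface in general. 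Hence this route cannot show that $v$ extends analytically (let alone holomorphically) across $\pO$, and without that you obtain no extension of $u$ beyond $\pO$ at all. A secondary defect: even granting some extension of $v$ to $\tO'$, recovering $u=v e^{-\lambda d}$ there with $d$ ``extended analytically'' does not reproduce the harmonic continuation of $u$, because $v$ on $\tO'$ was \emph{defined} by reflection, not by analytic continuation from $\tO$.

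What is needed --- and what the paper uses --- is a \emph{boundary} analyticity theorem for the analytic elliptic boundary value problem itself, with no reflection. The paper applies the Lions--Magenes theorem on elliptic iterates to the rescaled function $u_{x_0,\lambda}(x)=u(x_0+x/\lambda)$, which solves $\Delta u_{x_0,\lambda}=0$ with the $\lambda$-free boundary condition $\partial u_{x_0,\lambda}/\partial\nu=u_{x_0,\lambda}$ on the rescaled (and only flatter) domain; this yields an analytic extension of $u_{x_0,\lambda}$ to a full ball $B(0,r_0)$ with $r_0$ depending only on $\Omega$. One then observes that $\Delta u$ is analytic on $B(x_0,r_0/\lambda)$ and vanishes on the open set $B(x_0,r_0/\lambda)\cap\Omega$, hence vanishes identically, so $u$ is harmonic on the whole ball. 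Your rescaling observation (dilation by $\lambda\geq1$ improves the geometry and normalizes the eigenvalue, so all constants depend only on $\Omega$) and your final step --- derivative estimates for harmonic functions combined with the sub-mean-value inequality, giving \eqref{derivative_estimate} --- coincide with the paper's and are fine; it is only the mechanism for crossing $\pO$ that must be replaced.
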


\begin{proof}

By the theorem on elliptic iterates of Lions and Magenes (\cite[Theorem VIII.1.2]{lm2}),
$u$ is analytic on a neighborhood of $\overline{\Omega}$.  
%See also \cite[Theorem 4.1]{kukavica} for an easier formulation of this result. 
To determine the size of this neighborhood, 
for a fixed $x_0\in\pO$, in a similar way as in Section \ref{reflection},
%recall the scaling 
denote
\begin{equation*}
u_{x_0,\lambda}(x) := u(x_0+ x/\lambda), \ \  x\in %B(0, r_1) \cap 
\overline{\Omega_{x_0,\lambda}}, 
\end{equation*}
where $\Omega_{x_0,\lambda} = \{x: x_0+x/\lambda \in\Omega \}$. %, $r_1 =  \delta \lambda_1$.
By \eqref{main} %\eqref{main_v} and \eqref{abc} 
we have
\begin{equation*}
\begin{aligned}
%\Delta v_{x_0,\lambda} - 2 \nabla d \cdot \nabla v_{x_0,\lambda} + \left(1-\frac{\Delta d}{\lambda} \right) v_{x_0,\lambda} &=0 &\text{ in }  \Omega_{x_0,\lambda} \cap B(0, r_1), 
\Delta u_{x_0,\lambda} &= 0  &\text{ in }  \Omega_{x_0,\lambda},\\
\frac{\p u}{\p \nu} &=u &\text{ on } \partial \Omega_{x_0,\lambda}. %\cap B(0, r_1). 
\end{aligned}
\end{equation*}
%By going through the proof of Theorem VIII.1.3 in \cite{lm2}, one can check that
It follows from sections VIII.1 and VIII.2 in \cite{lm2} that
$u_{x_0,\lambda}$ can be analytically extended onto $B(0, r_0)$, where $r_0$ depends
only on $\Omega$, not on $x_0$ or $\lambda$
(the domain $\Omega_{x_0,\lambda}$ changes, but only becomes flatter, i.e.~better with increasing $\lambda$).
This means that $u$ can be analytically extended onto $B(x_0,r_0/\lambda)$.
%Let $R_{\lambda}>0$ be such that $u$ is analytic on
%$\Omega^{R_{\lambda}} = \{x\in \R^N: \dist(x,\Omega)<R_{\lambda} \}$. 
Then $\Delta u$ is also analytic on $B(x_0,r_0/\lambda)$ %this connected set
and is equal to zero on the open set $B(x_0,r_0/\lambda) \cap \Omega$, hence it is zero in $B(x_0,r_0/\lambda)$ %$\Omega^{R_{\lambda}}$
and $u$ is harmonic in $B(x_0,r_0/\lambda)$. %$\Omega^{R_{\lambda}} \supset B(x_0,R_{\lambda})$, where $x_0\in\pO$. 
Hence, from Proposition 1.13 and Remark 1.19 in \cite{lin_skripta}, we obtain
\begin{equation*}
\begin{aligned}
 |D^{\alpha} u(x_0)| &\leq C \frac{\alpha !}{(r/2n e \lambda)^{|\alpha|}} \max_{\overline{B(x_0,r/2 \lambda)}}|u| \\
&\leq C \frac{\alpha !}{(r/2ne\lambda)^{|\alpha|}} \left( \dashint_{B(x_0,r/\lambda)} u^2 \right)^{1/2}.
%||u||_{L^2(B(x_0,r_0/\lambda))}.
\end{aligned}
\end{equation*}
\end{proof}

%\begin{cor}
% For every $x_0\in \pO$ and $R_{\lambda}$ as above, 
%\end{cor}

\begin{cor} \label{u_complexification}
 Let $r_0$ be as in Proposition \ref{derivatives_for_complexification}, $x_0\in\pO$. Then
$u$ can be extended into the complex ball $B^{\mathbb{C}^n}(x_0,r_0/6n\lambda)$
such that for any $r\leq r_0$,
\begin{equation} \label{l_infty_complex}
 \sup_{z\in B^{\mathbb{C}^n}(x_0,r/6n\lambda)} |u(z)| \leq C \left( \dashint_{B(x_0,r/\lambda)} u^2 \right)^{1/2}.
\end{equation}
\end{cor}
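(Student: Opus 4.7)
The plan is to construct the complex extension as the Taylor series of $u$ at $x_0$ and bound it termwise using the derivative estimate already established in Proposition \ref{derivatives_for_complexification}.

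First I would define the candidate extension by
\begin{equation*}
u(z) := \sum_{\alpha} \frac{D^{\alpha} u(x_0)}{\alpha!} (z-x_0)^{\alpha}, \qquad z \in \mathbb{C}^n,
\end{equation*}
and verify that it converges absolutely on the polydisc $P(x_0, r/6n\lambda) = \{z : |z_j - x_{0,j}| < r/6n\lambda \ \forall j\}$, which contains the complex ball $B^{\mathbb{C}^n}(x_0, r/6n\lambda)$. Since $u$ is real-analytic on a real neighborhood of $x_0$ by Proposition \ref{derivatives_for_complexification}, this complex Taylor series will automatically agree with $u$ on the real slice and hence provides the desired extension.

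Next, for $z$ in the above polydisc I would write $R := r/2ne\lambda$ and $\rho := r/6n\lambda$, and apply the derivative bound \eqref{derivative_estimate} from Proposition \ref{derivatives_for_complexification}, which reads $|D^{\alpha} u(x_0)|/\alpha! \le C M / R^{|\alpha|}$ with $M = \bigl(\dashint_{B(x_0, r/\lambda)} u^2\bigr)^{1/2}$. Since $|(z-x_0)^{\alpha}| = \prod_j |z_j - x_{0,j}|^{\alpha_j} \le \rho^{|\alpha|}$ on the polydisc, this gives
\begin{equation*}
|u(z)| \le C M \sum_{\alpha} \left(\frac{\rho}{R}\right)^{|\alpha|} = CM \sum_{k=0}^{\infty} \binom{n+k-1}{k} \left(\frac{e}{3}\right)^{k} = CM \left(\frac{3}{3-e}\right)^{n},
\end{equation*}
where I used $\rho/R = e/3 < 1$ and the standard identity $\sum_k \binom{n+k-1}{k} x^k = (1-x)^{-n}$. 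Since $n$ is fixed, the factor $(3/(3-e))^n$ is absorbed into a new constant $C$ depending only on $\Omega$ and $n$.

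Finally, since the Euclidean complex ball $B^{\mathbb{C}^n}(x_0, r/6n\lambda)$ is contained in the polydisc $P(x_0, r/6n\lambda)$, the above bound holds throughout the ball and yields \eqref{l_infty_complex}. There is essentially no obstacle here beyond keeping track of constants: the only mild subtlety is verifying $e/3 < 1$ so that the multinomial series converges, and noting that a ball in $\mathbb{C}^n$ sits inside the polydisc of the same radius, which is immediate from $|z_j - x_{0,j}| \le |z - x_0|_{\mathbb{C}^n}$.
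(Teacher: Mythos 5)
Your proposal is correct and follows exactly the paper's (very terse) argument: complexify the Taylor series at $x_0$, bound the coefficients $|D^{\alpha}u(x_0)|/\alpha!$ via \eqref{derivative_estimate}, and sum the resulting multinomial series using $\rho/R = e/3 < 1$. The explicit computation of the constant $(3/(3-e))^n$ and the polydisc-contains-ball observation are exactly the details the paper leaves implicit.
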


\begin{proof}
 Write the real Taylor polynomial of $u$ in $x_0$. By the estimate \eqref{derivative_estimate} on its coefficients
$|D^{\alpha} u(x_0)|/ \alpha ! $, it converges for all complex $z\in B^{\mathbb{C}^n}(x_0,r_0/2ne\lambda)$,
and for $z\in B^{\mathbb{C}^n}(x_0,r/6n\lambda)$ we also obtain the estimate \eqref{l_infty_complex}.
\end{proof}

\subsection{Proof of Theorem \ref{thm_steklov_analytic}}

\noindent
{\bf Flattening the boundary and complexification:}

%Similarly 
As in the proof of Theorem \ref{thm_doubling_bdry} in Section \ref{sec-doubling-on-bdry},
we cover $\pO$ by finitely many pieces $\Gamma_i\subset \pO$, $i=1,2,\dots, k$, such that 
$k$ depends only on $\Omega$ and each $\Gamma_i$
is a graph of a (this time) analytic function (in some coordinate system). %whose gradient is bounded by $ 0.1$.
We require some additional properties of this cover.
For each %Fix one  
$\Gamma_i=\Gamma$, after choosing the right coordinate system, we want %and without loss of generality assume that
\begin{equation*}
 %B(0,r_0) \cap \partial\Omega 
\Gamma = \{ x=(x',x_n)\in \R^n: x'\in\tilde{\Gamma}, x_n=\Phi(x') \}, 
\end{equation*}
where $\tilde{\Gamma} \subset \R^{n-1}$ is a compact set, $\Phi$ is analytic 
on the larger set 
$$\tilde{\Gamma}'=\left\{x\in \R^{n-1}: \dist(x,\tilde{\Gamma})< \frac{2 r_0}{\lambda_1}  \right\},$$
where $r_0>0$ is a constant depending on $\Omega$,
%comes from Proposition \ref{derivatives_for_complexification} but can be diminished if necessary, 
$$\{(x',  \Phi(x')): x'\in \tilde{\Gamma}' \} \subset \pO,$$ 
%without loss of generality (after a possible change of coordinates) 
$\nabla \Phi (x_0) = 0$ for some point $x_0\in \tilde{\Gamma}$,
and $\Phi$ can be extended to a complex analytic function on 
$$\tilde{\Gamma}' \times \left(\frac{-r_0}{\lambda_1},\frac{r_0}{\lambda_1}\right)^{n-1}$$ 
with $\nabla_z \Phi(x_0) = 0$,
\begin{equation} \label{Phi_derivative_bound}
|\nabla_z \Phi(z)| \leq 0.1  \ \ \text{ for } z\in \tilde{\Gamma}' \times \left(\frac{- r_0}{\lambda_1},\frac{ r_0}{\lambda_1}\right)^{n-1}.
\end{equation}
This can be achieved by considering a neighborhood of each $x_0\in \pO$ which can be parametrized
this way and then choosing a finite cover of $\pO$ by these neighborhoods. Denote
$$\tilde{\Gamma}_m'=\left\{x\in \R^{n-1}: \dist(x,\tilde{\Gamma})< \frac{ r_0}{\lambda_1}  \right\}$$
an ``intermediate'' set ($\tilde\Gamma \subset \tilde\Gamma_m' \subset \tilde\Gamma'$).

In a similar way as in the proof of Theorem \ref{thm_doubling_bdry}, denote
\begin{equation*}
w(z'):=u(z',\Phi(z')).  
\end{equation*}
By the definition of $\Phi$ and
Corollary \ref{u_complexification}, this is well-defined if 
\begin{equation*}
z'\in \tilde{\Gamma}' \times \left(\frac{-r_0}{\lambda_1},\frac{r_0}{\lambda_1}\right)^{n-1} \qquad \text{ and } \qquad
\dist((z',\Phi(z')),\pO )\leq \frac{r_0}{6n\lambda} ,
\end{equation*}
where $r_0$ is the minimum of $r_0$'s coming up in Corollary \ref{u_complexification} and in the discussion above
about properties of $\Phi$. If $z'=x'+iy'$, $x'\in \tilde{\Gamma}'$, $|y'|\leq \frac{r_0}{\lambda_1}$, then $(x',\Phi(x'))\in\pO$, and
we have
\begin{equation*}
\begin{aligned}
\dist ((z',\Phi(z')),\pO )\leq |(x'+iy', \Phi (x'+iy')) - (x',\Phi(x'))| \\
\leq |y'| + |y'| \sup_{t\in[0,1]} |\nabla_z \Phi (x'+ity')|
\leq 1.1 |y'| 
\end{aligned}
\end{equation*}
by \eqref{Phi_derivative_bound}. %if $|y'|\leq \frac{r_0}{\lambda_1}$. 
Hence, $w$ is well-defined and analytic for
\begin{equation*}
z'=x'+iy',  \ \ x'\in \tilde{\Gamma}',\ \  |y'|\leq \frac{r_0}{6.6n\lambda}, %,\ \  |z_n| \leq \frac{r_0}{12n\lambda},
\end{equation*}
and %on this set it holds 
by Corollary \ref{u_complexification}, for all $r\leq r_0$, $x'\in \tilde{\Gamma}_m'$ we also have
\begin{equation} \label{w_L_infinity}
 \sup_{ B^{\C^{n-1}}\left(x', \frac{r}{ 6.6n\lambda}\right)} |w| \leq C \left( \dashint_{B((x',\Phi(x')),r/\lambda)} u^2 \right)^{1/2}.
\end{equation}
Instead of the bound on the right hand side which involves an integral over $\R^n$-balls, 
using Corollary \ref{bdry_controls_solid} we can bound the left hand side by an integral
over balls on $\pO$. Denote $x_0=(x',\Phi(x'))$.
Although Corollary \ref{bdry_controls_solid} gives a bound only on
an integral over $B(x_0, r/\lambda) \cap \Omega$, in our case $u$ is harmonic
also on $B(x_0, r/\lambda) \cap \Omega^c$ and satisfies the almost identical
boundary condition $\frac{\p u}{\p \nu} = - \lambda u$ on $\p \Omega^c$.
%(the normal has opposite sign). 
Since all arguments in the proof of Corollary \ref{bdry_controls_solid}
are local and the sign of $\lambda$ never mattered, applying it to $B(x_0, r/\lambda) \cap \Omega^c$ gives us
\begin{equation*}
 \int_{B\left(x_0,\frac{r}{2\lambda}\right) \cap \pO} u^2 \geq 2^{-C\lambda^5} \frac{\lambda}{r} \int_{B\left(x_0,\frac{r}{\lambda}\right)\cap \Omega^c} u^2
\end{equation*}
and together with Corollary \ref{bdry_controls_solid} for regular $\Omega$ and \eqref{w_L_infinity}, we obtain
\begin{equation*}
  \sup_{B^{\C^{n-1}} \left(x', \frac{r}{ 3.3n\lambda}\right)} |w| \leq  2^{C\lambda^5} \left( \left(\frac{\lambda}{r}\right)^{n-1} \int_{B\left(x_0,\frac{r}{\lambda}\right) \cap \pO} u^2 \right)^{1/2}
\end{equation*}
for all  $x'\in \tilde{\Gamma}_m'$, $x_0=(x',\Phi(x'))$, 
and $r\leq r_0$, where $r_0$ depends only on $\Omega$ (it is different than above).
By a change of variables as in the proof of Theorem \ref{thm_doubling_bdry}, using \eqref{Phi_derivative_bound}
for bounding the Jacobian, this %translates into
implies
\begin{equation} \label{w_L_infty_brdy}
  \sup_{B^{\C^{n-1}}\left(x', \frac{r}{ 4n\lambda}\right)} |w| \leq  2^{C\lambda^5} %\left(\frac{\lambda}{r}\right)^{n-1} 
\left( \dashint_{B^{n-1}\left(x',\frac{r}{\lambda}\right) } w^2 \right)^{1/2}
\end{equation}
for all $x'\in \tilde\Gamma_m'$ and $r\leq r_0$, where $r_0>0$  depends only on $\Omega$ (different from $r_0$ above).

Also note that the doubling condition, Theorem \ref{thm_doubling_bdry}, translates into the following for $w$:
\begin{equation} \label{doubling_bdry_w_new}
  \int_{B^{n-1}(x',\frac{r}{\lambda}) }  w^2 \leq  2^{C\lambda^5} \int_{ B^{n-1}(x',\frac{r}{2\lambda}) }  w^2
\end{equation}
for all $x'\in \tilde\Gamma_m'$ and $r\leq r_0$, where $r_0>0$  depends only on $\Omega$ (different from $r_0$ above).
We actually proved this in the process of proving Theorem \ref{thm_doubling_bdry} as \eqref{doubling_bdry_w}
(the $w|_{B^{n-1}(0,r_0) \times\{0\}} $ in there was defined exactly as our $w$ shifted to $0$ and scaled by $\lambda$).

\medskip

By the change of variable as in the proof of Theorem \ref{thm_doubling_bdry}, using \eqref{Phi_derivative_bound},
to prove \eqref{nodal_set_analytic}, it is enough to prove
\begin{equation} \label{nodal_set_analytic_w}
 \mathcal H^{n-2} (\{x'\in \tilde{\Gamma}: w(x')=0\}) \leq C \lambda^6.
\end{equation}
%\medskip
To estimate the nodal set, we use an estimate on the number of zero points
for analytic functions. 

\begin{lemma} \label{lemma_complex_nodal}
 Suppose $f: B_1 \subset \mathbb C \to \mathbb C$ is analytic with
\begin{equation*}
 |f(0)|=1 \ \ \text{ and } \ \ \sup_{B_1}|f|\leq 2^N,
\end{equation*}
for some positive constant $N$. Then for any $r\in (0,1)$ there holds
\begin{equation*}
 \# \{z\in B_r: f(z)=0 \} \leq cN,
\end{equation*}
where $c$ is a positive constant depending only on $r$. For $r=1/2$, we have
\begin{equation*}
 \# \{z\in B_{1/2}: f(z)=0 \} \leq N.
\end{equation*}
\end{lemma}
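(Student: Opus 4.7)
The plan is to deduce this from Jensen's formula, which is the classical tool for counting zeros of analytic functions via growth bounds. Let $z_1,\dots,z_m$ denote the zeros of $f$ in $B_R$ (for some $R<1$ to be chosen), listed with multiplicity. Since $|f(0)|=1\neq 0$, Jensen's formula gives
\begin{equation*}
\log|f(0)| + \sum_{k=1}^{m} \log\frac{R}{|z_k|} = \frac{1}{2\pi}\int_0^{2\pi} \log|f(Re^{i\theta})|\,d\theta.
\end{equation*}

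Next I would estimate the two sides. The left-hand side: $\log|f(0)|=0$, and for zeros $z_k\in B_r$ (a subset of the zeros in $B_R$), each contributes at least $\log(R/r)$ to the sum, while the remaining terms are nonnegative since $|z_k|<R$. Hence the left-hand side is at least $\#\{z_k\in B_r\}\cdot \log(R/r)$. For the right-hand side, the maximum principle applied to the analytic extension of $f$ (or taking $R$ slightly below $1$ and passing to the limit) yields $|f(Re^{i\theta})|\le \sup_{B_1}|f|\le 2^N$, so the integral is bounded by $N\log 2$.

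Combining these two bounds gives
\begin{equation*}
\#\{z\in B_r : f(z)=0\}\cdot \log\frac{R}{r} \le N\log 2,
\end{equation*}
that is, $\#\{z\in B_r: f(z)=0\}\le \frac{N\log 2}{\log(R/r)}$. Letting $R\nearrow 1$ yields the bound $cN$ with $c=\log 2/\log(1/r)$, which depends only on $r$. In the special case $r=1/2$ we take $R\to 1$ to get $c=1$, giving the sharp constant asserted in the statement. There is no serious obstacle here: the only mild care needed is that the hypothesis $\sup_{B_1}|f|\le 2^N$ is over the open ball, so one either applies Jensen on each $B_R$ with $R<1$ and then takes the supremum limit, or invokes the fact that $\log|f(Re^{i\theta})|$ is subharmonic in $R$.
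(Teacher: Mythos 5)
Your Jensen's-formula argument is correct and complete, and it yields exactly the stated constants ($c=\log 2/\log(1/r)$, equal to $1$ for $r=1/2$); the paper itself gives no proof of this lemma, only a citation to Donnelly--Fefferman and to Lemma 2.3.2 of the Han--Lin monograph, where the standard proof is precisely this one. The only (routine) point worth making explicit is that Jensen's formula requires no zeros on the circle $|z|=R$, which is arranged by choosing $R$ outside the countable set of moduli of zeros before letting $R\nearrow 1$, and that your count is with multiplicity, which only strengthens the conclusion.
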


\begin{proof}
A similar version of this lemma was first proved in \cite{donnelly_fefferman}. This version can be found in
\cite{book} as Lemma 2.3.2.
\end{proof}

%\begin{proof}[Proof of Theorem \ref{thm_steklov_analytic}]
\begin{proof}[Proof of Theorem \ref{thm_steklov_analytic}]
It remains to prove \eqref{nodal_set_analytic_w}.
Take $r_0$ such that both \eqref{w_L_infty_brdy} and \eqref{doubling_bdry_w_new} hold for all $r\leq r_0$ and $x'\in\tilde\Gamma_m'$.
Choose a cover of $\tilde\Gamma$ by balls of radii $R=\frac{r_0}{(16n+1)\lambda}$. We need $C \lambda^{n-1}$ balls in the cover,
where $C$ depends only on $\Omega$. Take one of them and denote it $B(p,R)$, i.e.~the center is $p\in \tilde\Gamma$.
Choose $x_p\in B(p,R) \subset \tilde\Gamma_m'$ such that
\begin{equation} \label{x_p_large}
 |w(x_p)| \geq \left(\dashint_{B(p,R)}|w|^2 \right)^{1/2}.
\end{equation}
Then $B(p,R) \subset B(x_p,2R)$, so the $C\lambda^{n-1}$ balls $B(x_p,2R)$ cover $\tilde\Gamma$.
Next, 
$$B(x_p,16nR)\subset B(p,(16n+1)R),$$ 
and since $4R<r_0/4n\lambda$, by \eqref{w_L_infty_brdy} we have
\begin{equation*}
\begin{aligned}
 \sup_{B^{\C^{n-1}}(x_p, 4R)} |w| &\leq 2^{C\lambda^5} \left( \dashint_{B^{n-1}\left(x_p,{16nR}\right) } w^2 \right)^{1/2} \\
&\leq 2^{C\lambda^5} \left( \dashint_{B^{n-1}\left(p,(16n+1)R\right) } w^2 \right)^{1/2}.
\end{aligned}
\end{equation*}
Now use the doubling condition \eqref{doubling_bdry_w_new} with center $p$, $\lceil \log_2(16n+1) \rceil$-times,
and obtain
\begin{equation} \label{sup_estimate_w}
 \sup_{B^{\C^{n-1}}(x_p, 4R)} |w| \leq 2^{C\lambda^5} \left( \dashint_{B^{n-1}\left(p,R\right) } w^2 \right)^{1/2}
\leq  2^{C\lambda^5} |w(x_p)|
\end{equation}
(we used \eqref{x_p_large} in the last inequality). For each direction $\omega \in \R^{n-1}$, $|\omega|=1$,
consider the function $f_{\omega}(z)= w(x_p + 4R z \omega)$ of one complex variable $z$ defined on $B^{\C}(0,1)$.
%>
From \eqref{sup_estimate_w} and Lemma \ref{lemma_complex_nodal}, we obtain 
\begin{equation*}
\begin{aligned}
\# \{x\in B^{n-1}(x_p,2R) : x-x_p || \omega,  w(x)=0 \} & \leq
\# \{z\in B_{1/2}: f_{\omega}(z)=0 \} \\
&= N(\omega)\leq C \lambda^5. 
\end{aligned}
\end{equation*}
By the integral geometry estimate \cite[3.2.22]{federer}, we obtain
\begin{equation*}
\mathcal H^{n-2} \{x\in B^{n-1}(x_p,2R): w(x)=0  \} \leq c(n)R^{n-2}\int_{\mathbb S^{n-2}} N(\omega) d\omega  \leq C  \frac{1}{\lambda^{n-2}} \lambda^5.
\end{equation*}
Since there were $C \lambda^{n-1}$ balls $B^{n-1}(x_p,2R)$ which cover $\tilde\Gamma$, summing up these estimates gives \eqref{nodal_set_analytic_w}.
\end{proof}

\bibliographystyle{spmpsci}%amsplain,amsalpha
%\bibliographystyle{amsplain}
%\bibliography{katka}

\end{document}